\author{A.A. Vasil'eva}
\title{Estimates for norms of two-weighted summation operators
on a tree under some conditions on weights}
\date{}
\begin{document}

\maketitle

\newenvironment{Biblio}{%
                  \renewcommand{\refname}{\footnotesize REFERENCES}%
                  }

\def\inff{\mathop{\smash\inf\vphantom\sup}}
\renewcommand{\le}{\leqslant}
\renewcommand{\ge}{\geqslant}
\newcommand{\sgn}{\mathrm {sgn}\,}
\newcommand{\inter}{\mathrm {int}\,}
\newcommand{\dist}{\mathrm {dist}}
\newcommand{\supp}{\mathrm {supp}\,}
\newcommand{\R}{\mathbb{R}}
\renewcommand{\C}{\mathbb{C}}
\newcommand{\Z}{\mathbb{Z}}
\newcommand{\N}{\mathbb{N}}
\newcommand{\Q}{\mathbb{Q}}
\theoremstyle{plain}
\newtheorem{Trm}{Theorem}
\newtheorem{trma}{Theorem}
\newtheorem{Def}{Definition}
\newtheorem{Cor}{Corollary}
\newtheorem{Lem}{Lemma}
\newtheorem{Rem}{Remark}
\newtheorem{Sta}{Proposition}
\renewcommand{\proofname}{\bf Proof}
\renewcommand{\thetrma}{\Alph{trma}}

\section{Introduction}

In this paper, estimates for norms of weighted summation operators
(discrete Hardy-type operators) on a tree were obtained for some
conditions on weights.

The inequalities
\begin{align}
\label{har_dy} \left(\sum \limits _{k=0}^\infty w_k^q\left( \sum
\limits _{j=0}^k u_jf_j\right)^q\right)^{\frac 1q} \le C\left(\sum
\limits _{k=0}^\infty |f_k|^p\right)^{\frac 1p}, \quad (f_k)_{k\in
\Z_+}\in l_p,
\end{align}
were studied in papers of Leindler \cite{l_leind}, Bennett
\cite{ben1, ben2, bennett_g}, Braverman and Stepanov
\cite{vd_step94}, Goldman \cite{gold_man}. The order estimates of
the minimal constant $C$ in (\ref{har_dy}) were first obtained in
\cite{bennett_g} and \cite{vd_step94} (for $1\le p, \, q\le
\infty$ the upper estimates were proved by Heinig and Andersen
\cite{and_hein, hein1}). The similar problem for two-weighted
integration operators on a semiaxis was solved by Bradley
\cite{j_brad}, Mazya and Rozin \cite{mazya1}. Later, these results
were generalized for matrix operators and integration operators
with different kernels (see, e.g., papers of Heinig and Andersen
\cite{and_hein, hein1}, Stepanov \cite{step90, stepanov1}, Oinarov
\cite{r_oin}, Prokhorov and Stepanov \cite{pr_st}, Stepanov and
Ushakova \cite{st_ush}, Rautian \cite{n_raut}, Farsani
\cite{fars_sm}, Oinarov, Persson and Temirkhanova
\cite{oin_per_tem}, Okpoti, Persson and Wedestig
\cite{okp_per_wed1, okp_per_wed2}, and the books
\cite{kuf_mal_pers, kuf_per, grosse_erd}). In the case $p=q=2$
Naimark and Solomyak \cite{naim_sol} showed that the problem of
estimating the norm of weighted integration operator on a regular
tree with weights depending only on distance from the root can be
reduced to a problem on estimating the norm of some weighted
Hardy-type operator on a half-axis.

The criterion of boundedness of a two-weighted integration
operator on a metric tree and order estimates for its norm were
obtained by Evans, Harris and Pick \cite{ev_har_pick}. The
estimate for the norm of a summation operator on a combinatorial
tree can be derived from their result (it will be made in \S
\ref{sect_ev} for $p\le q$). However, this estimate in general
case is rather complicated. Here under some conditions on weights
we obtain estimates which are more simple and convenient for
applications.

The Hardy-type inequalities on trees are used in order to prove
embedding theorems for weighted Sobolev classes on a domain (see
\cite{evans_har, vas_rjmp1, vas_rjmp2}) and in estimating widths
of functional classes, $s$-numbers and entropy numbers of
embedding operators (see \cite{evans_har, e_h_l, ev_har_lang,
lifs_m, l_l, l_l1, solomyak}).

Let $X$, $Y$ be sets, $f_1$, $f_2:\ X\times Y\rightarrow \R_+$. We
write $f_1(x, \, y)\underset{y}{\lesssim} f_2(x, \, y)$ (or
$f_2(x, \, y)\underset{y}{\gtrsim} f_1(x, \, y)$) if, for any
$y\in Y$, there exists $c(y)>0$ such that $f_1(x, \, y)\le
c(y)f_2(x, \, y)$ for each $x\in X$; $f_1(x, \,
y)\underset{y}{\asymp} f_2(x, \, y)$ if $f_1(x, \, y)
\underset{y}{\lesssim} f_2(x, \, y)$ and $f_2(x, \,
y)\underset{y}{\lesssim} f_1(x, \, y)$.

Throughout this paper we consider graphs ${\cal G}$ with finite or
countable vertex set, which will be denoted by ${\bf V}({\cal
G})$. Also we suppose that the graphs have neither multiple edges
nor loops. The set of edges we denote by ${\bf E}({\cal G})$ and
identify pairs of adjacent vertices with edges that connect them.

Let ${\cal T}=({\cal T}, \, \xi_0)$ be a tree rooted at $\xi_0$.
We introduce a partial order on ${\bf V}({\cal T})$ as follows: we
say that $\xi'>\xi$ if there exists a simple path $(\xi_0, \,
\xi_1, \, \dots , \, \xi_n, \, \xi')$ such that $\xi=\xi_k$ for
some $k\in \overline{0, \, n}$; by the distance between $\xi$ and
$\xi'$ we mean the quantity $\rho_{{\cal T}}(\xi, \,
\xi')=\rho_{{\cal T}}(\xi', \, \xi) =n+1-k$. In addition, set
$\rho_{{\cal T}}(\xi, \, \xi)=0$. For $j\in \Z_+$ and $\xi\in {\bf
V}({\cal T})$ write
$$
\label{v1v}{\bf V}_j(\xi):={\bf V}_j ^{{\cal T}}(\xi):=
\{\xi'\ge\xi:\; \rho_{{\cal T}}(\xi, \, \xi')=j\}.
$$
For vertices $\xi\in {\bf V}({\cal T})$, denote by ${\cal
T}_\xi=({\cal T}_\xi, \, \xi)$ the subtree in ${\cal T}$ with
the vertex set
$$
\{\xi'\in {\bf V}({\cal T}):\xi'\ge \xi\}.
$$

Let ${\mathbf W}\subset {\mathbf V}({\mathcal T})$. We say that
${\mathcal G}\subset {\mathcal T}$ is a maximal subgraph on the
set of vertices ${\mathbf W}$ if ${\mathbf V}({\mathcal
G})={\mathbf W}$ and if any two vertices $\xi'$, $\xi''\in
{\mathbf W}$ that are adjacent in ${\mathcal T}$ are also adjacent
in ${\mathcal G}$.

Let ${\cal G}$ be a subgraph in ${\cal T}$. Denote by ${\bf
V}_{\max} ({\cal G})$ and ${\bf V}_{\min}({\cal G})$ the set
of maximal and minimal vertices in ${\cal G}$, respectively. Given
a function $f:{\bf V}({\cal G})\rightarrow \R$, we set
\begin{align}
\label{flpg} \|f\|_{l_p({\cal G})}=\left ( \sum \limits _{\xi \in
{\bf V} ({\cal G})}|f(\xi)|^p\right )^{1/p}.
\end{align}
Denote by $l_p({\cal G})$ the space of functions $f:{\bf
V}({\cal G})\rightarrow \R$ with a finite norm $\|f\|_{l_p({\cal
G})}$.

Let $({\cal G}, \, \xi_0)$ be a disjoint union of trees,
$1\le p\le \infty$, and let $u$, $w:{\bf V}({\cal G})\rightarrow (0, \,
\infty)$ be weight functions. Define the summation operator $S_{u,w,{\cal
G}}$ by
$$
S_{u,w,{\cal G}}f(\xi) = w(\xi)\sum \limits _{\xi'\le \xi}
u(\xi')f(\xi'), \quad \xi \in {\bf V}({\cal G}), \quad f:{\bf
V}({\cal G}) \rightarrow \R.
$$
By $\mathfrak{S}^{p,q}_{{\cal G},u,w}$ we denote the operator norm
of $S_{u,w,{\cal G}}:l_p({\cal G}) \rightarrow l_q({\cal
G})$, i.e., the minimal constant $C$ in the inequality
$$
\left(\sum \limits_{\xi \in {\bf V}({\cal G})} w^q(\xi) \left(
\sum \limits _{\xi'\le \xi}u(\xi')f(\xi')\right)^q\right)^{1/q}
\le C\left(\sum \limits_{\xi \in {\bf V}({\cal G})}
|f(\xi)|^p\right)^{1/p}, \;\; f:{\bf V}({\cal G})\rightarrow \R.
$$

Let us formulate the main results of this paper.
\begin{Trm}
\label{main_plq} Let ${\cal A}$ be a tree and let $1<p<q<\infty$.
Suppose that there are $K\ge 1$, $l_0\in \N$ and $\lambda\in (0,
\, 1)$ such that
\begin{align}
\label{cv1k} {\rm card}\, {\bf V}_1^{\cal A}(\xi)\le K, \quad \xi
\in {\bf V}({\cal A}),
\end{align}
\begin{align}
\label{wq_lam} \frac{u(\xi')}{u(\xi)}\le K,  \quad
\frac{\|w\|_{l_q({\cal A}_{\xi''})}} {\|w\|_{l_q({\cal A}_\xi)}}
\le \lambda, \quad \xi \in {\bf V}({\cal A}), \quad \xi'\in {\bf
V}_1^{\cal A}(\xi), \quad \xi''\in {\bf V}_{l_0}^{\cal A}(\xi).
\end{align}
Then $\mathfrak{S}^{p,q}_{{\cal A},u,w}
\underset{K,\lambda,l_0,p,q}{\asymp} \sup _{\xi \in {\bf V}({\cal A})}
u(\xi)\|w\|_{l_q({\cal A}_\xi)}$.
\end{Trm}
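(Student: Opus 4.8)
The plan is to prove the two inequalities $\mathfrak{S}^{p,q}_{\mathcal{A},u,w}\gtrsim A$ and $\mathfrak{S}^{p,q}_{\mathcal{A},u,w}\lesssim A$ separately, where $A:=\sup_\xi u(\xi)\|w\|_{l_q(\mathcal{A}_\xi)}$; set $W(\xi):=\|w\|_{l_q(\mathcal{A}_\xi)}$ and note that one may assume $f\ge0$. The lower bound is immediate from test functions and uses none of the hypotheses: for fixed $\xi$ take $f=\delta_\xi$, so that $S_{u,w,\mathcal{A}}f(\zeta)=u(\xi)w(\zeta)$ for $\zeta\ge\xi$ and $0$ otherwise, giving $\|S_{u,w,\mathcal{A}}f\|_{l_q(\mathcal{A})}=u(\xi)W(\xi)$ and $\|f\|_{l_p(\mathcal{A})}=1$. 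Taking the supremum over $\xi$ yields $\mathfrak{S}^{p,q}_{\mathcal{A},u,w}\ge A$.

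For the upper bound I would use the dyadic level-set method of Maz'ya and Bradley, adapted to the tree. Write $g(\xi)=\sum_{\eta\le\xi}u(\eta)f(\eta)$; since $g$ is nondecreasing along every path issuing from the root, for $k\in\Z$ the set $\{g\ge2^k\}$ is a disjoint union of subtrees $\mathcal{A}_v$ over the antichain $\Gamma_k$ of its minimal vertices, and the blocks $Q_v=\{\eta\ge v:\,2^k\le g(\eta)<2^{k+1}\}$ ($v\in\Gamma_k$, $k\in\Z$) partition $\mathbf{V}(\mathcal{A})$. On $Q_v$ one has $g\asymp2^k$, and disjointness of the subtrees gives $\sum_{\eta\in Q_v}w(\eta)^q\le W(v)^q$, so that
\[
\|S_{u,w,\mathcal{A}}f\|_{l_q(\mathcal{A})}^q=\sum_k\sum_{v\in\Gamma_k}\sum_{\eta\in Q_v}w(\eta)^q g(\eta)^q\lesssim\sum_k\sum_{v\in\Gamma_k}\bigl(2^kW(v)\bigr)^q.
\]
To control $2^kW(v)$ I would pass to the segment of the path from $v$ back to its ancestor in $\Gamma_{k-1}$, apply H\"older to the bound $\sum_\eta u(\eta)f(\eta)\ge2^{k-1}$ along it, and estimate $\bigl(\sum_\eta u(\eta)^{p'}\bigr)^{1/p'}\lesssim A/W(v)$. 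Here the pointwise bound $u(\eta)\le A/W(\eta)$ together with the geometric decay $W(v)/W(\eta)\le\lambda^{\lfloor\rho_{\mathcal{A}}(\eta,v)/l_0\rfloor}$, which is a consequence of \eqref{wq_lam}, makes $\sum_\eta\bigl(W(v)/W(\eta)\bigr)^{p'}$ a convergent geometric series depending only on $\lambda,l_0,p$. This gives $2^kW(v)\lesssim A\,F(v)^{1/p}$, where $F(v)$ is the $l_p$-mass of $f$ on that segment.

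It then remains to prove $\sum_k\sum_{v\in\Gamma_k}F(v)^{q/p}\lesssim\|f\|_{l_p(\mathcal{A})}^q$. On the half-line the analogous segments are pairwise disjoint and the elementary inequality $\sum c_v^{q/p}\le(\sum c_v)^{q/p}$, valid precisely because $q/p\ge1$, finishes the argument at once. The hard part will be exactly this step on a tree, and it is where the hypotheses are genuinely used: because of branching the segments overlap, a single vertex can lie on the segments of many $v\in\Gamma_k$ situated on different branches below it, and the naive superadditivity bound fails. I expect this overlap control to be the main obstacle. To handle it I would keep the weights $W(v)$ rather than discard them and exploit that for fixed $k$ the subtrees $\mathcal{A}_v$, $v\in\Gamma_k$, are disjoint, so $\sum_{v\in\Gamma_k}W(v)^q\le W(\xi_0)^q$; combining this mass bound with the per-vertex estimate $2^kW(v)\lesssim A\,F(v)^{1/p}$, the bounded branching \eqref{cv1k}, and the geometric decay of $W$ should bound the multiplicity of the cover and collapse the double sum into a convergent geometric series in $k$, yielding $\lesssim A^q\|f\|_{l_p(\mathcal{A})}^q$.

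Two further remarks streamline the argument. First, I would peel off the diagonal contribution by writing $g(\xi)=u(\xi)f(\xi)+g(\mathrm{parent}(\xi))$: the diagonal part is harmless since $w(\xi)u(\xi)\le A$ forces $\|w u f\|_{l_q(\mathcal{A})}\le A\|f\|_{l_q(\mathcal{A})}\le A\|f\|_{l_p(\mathcal{A})}$ (using $q\ge p$), so the level-set argument only has to treat the genuinely cumulative part, which also removes the difficulty of a single large jump of $g$ at one vertex. Second, the same path-wise geometric summation shows $A\asymp\sup_\xi W(\xi)\bigl(\sum_{\eta\le\xi}u(\eta)^{p'}\bigr)^{1/p'}$, which reconciles the clean answer $A$ with the Muckenhoupt-type quantity that the sharper test functions (powers of $u$ supported on a single path) produce as a lower bound.
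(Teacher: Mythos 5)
Your lower bound is correct (and even simpler than the paper's, which derives it from Lemma \ref{low_est}), and your H\"older-plus-geometric-decay estimate $2^kW(v)\lesssim A\,F(v)^{1/p}$ is a valid use of the second condition in (\ref{wq_lam}). The fatal problem is the step you reduce everything to: the inequality $\sum_k\sum_{v\in\Gamma_k}F(v)^{q/p}\lesssim\|f\|_{l_p({\cal A})}^q$ is not just "the main obstacle" --- it is false on a tree, so the reduction itself cannot be closed. Concretely, take the binary tree with $u\equiv 1$ and $w(\eta)=2^{-\beta j}$ at level $j$, $\beta q>1$ (then $W(\eta)\asymp 2^{-\beta j}$, (\ref{cv1k}) and (\ref{wq_lam}) hold, $A\asymp 1$). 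Put $f\equiv 1/h$ on a single path from the root to a vertex $b$ at level $h$, so that $g(b)$ is just below the threshold $1$, and put an arbitrarily small mass $\varepsilon$ at each of the $2^d$ descendants of $b$ at distance $d$, pushing $g$ over $1$ exactly there. All $2^d$ of these vertices lie in $\Gamma_0$, and each of their segments contains the common piece of the path carrying a fixed fraction of $\|f\|^p_{l_p}\asymp h^{1-p}$; hence $\sum_{v\in\Gamma_0}F(v)^{q/p}\gtrsim 2^d\,\|f\|_{l_p}^q$ with $d$ arbitrary. Once the factors $W(v)$ are replaced by $F(v)^{1/p}$ via the per-vertex bound, the overlap multiplicity is genuinely unbounded and no hypothesis of the theorem prevents it. (There is also a smaller defect already on the half-line: when $g$ jumps over several dyadic levels at one vertex, that vertex lies in $\Gamma_k$ for infinitely many $k$ with the same segment, so your double sum diverges; that one is repairable by summing the geometric series in $k$, or your diagonal-peeling remark.)

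Your fallback instinct --- keep the weights $W(v)$ and use disjointness, $\sum_{v\in\Gamma_k}W(v)^q\le W(b)^q$ below a common ancestor $b$, then apply the H\"older bound at $b$ --- does repair the single-scale estimate in the counterexample above, but that is where the real work begins, not where it ends: one must make the contributions of all scales $k$ summable while the $f$-mass is shared between scales and between branches, and your sketch offers no mechanism for this ("should bound the multiplicity \dots collapse into a convergent geometric series" is a hope, not an argument). This cross-scale bookkeeping is precisely the content of the paper's proof, which is structurally different: it characterizes $\mathfrak{S}^{p,q}$ by a discrete Evans--Harris--Pick criterion (Lemma \ref{hardy_cr}), proves by induction over the tree the key bound $B_{{\cal D},\Gamma}\|w\|_{l_q({\cal A}_{\xi_*}\backslash{\cal D}_\Gamma)}\lesssim 1$ under a one-level smallness condition $\sigma<\sigma_*(p,q)$ (Lemma \ref{bd_wq_est}, where the auxiliary function $f(t)$ built from $\sigma^{s/3}$ performs exactly the multi-scale accounting you are missing), and then converts the hypothesis "$\lambda$-decay over $l_0$ levels" into that one-level smallness by compressing blocks of $l_*=l_0t_*$ levels into single vertices (Lemma \ref{red}); (\ref{cv1k}) and the first part of (\ref{wq_lam}) enter only to control $\|u\|_{l_{p'}}$ of each compressed block, and a truncation plus monotone convergence finishes. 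So: lower bound fine, upper bound has a genuine gap at its central step.
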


Let $N\in \N\cup \{+\infty\}$, and let $({\cal A}, \, \xi_0)$ be a
tree such that ${\bf V}_{\max}({\cal A}) ={\bf V}^{\cal
A}_N(\xi_0)$. Suppose that there exist a non-decreasing function
$\psi:\R_+\rightarrow \R_+$ and a constant $C_*\ge 1$ such that
$\psi(0)=0$ and for any $0\le j\le j'< N+1$, $\xi\in {\bf
V}_{j}^{\cal A}(\xi_0)$
\begin{align}
\label{cvjj0} C_*^{-1} \cdot 2^{\psi(j')-\psi(j)} \le {\rm
card}\, {\bf V}_{j'-j}^{\cal A}(\xi) \le C_*\cdot
2^{\psi(j')-\psi(j)}.
\end{align}
Let $u$, $w:{\bf V}({\cal A}) \rightarrow (0, \, \infty)$,
$u(\xi)=u_j$, $w(\xi)=w_j$ for $\xi \in {\bf V}_j ^{\cal
A}(\xi_0)$, $1\le q\le p\le \infty$. Estimate the value
$\mathfrak{S}^{p,q}_{{\cal A},u,w}$.

Denote by $\overline{\mathfrak{S}}^{p,q}_{{\cal A},u,w}$
the minimal constant $C$ in the inequality
$$
\left(\sum \limits _{\xi \in {\bf V}({\cal A})} w^q(\xi)
\left(\sum \limits _{\xi'\le \xi} u(\xi')f(\xi')\right)^q
\right)^{\frac 1q} \le C\|f\| _{l_p({\cal A})},
$$
$$f:{\bf V}({\cal A}) \rightarrow \R_+, \quad f(\xi)=f_j \quad\text{for any}
\quad \xi \in {\bf V}_j^{\cal A}(\xi_0),\quad 0\le j< N+1.$$ For
such functions $f$ we have
$$
\left(\sum \limits _{\xi \in {\bf V}({\cal A})} w^q(\xi)
\left(\sum \limits _{\xi'\le \xi} u(\xi')f(\xi')\right)^q
\right)^{\frac 1q} \stackrel{(\ref{cvjj0})} {\underset{C_*}
{\asymp}} \left(\sum \limits _{j=0}^N w_j^q\cdot 2^{\psi(j)}\left(
\sum \limits _{i=0}^j u_if_i\right)^q\right)^{\frac 1q},
$$
$$
\|f\|_{l_p({\cal A})}\stackrel{(\ref{cvjj0})} {\underset{C_*}
{\asymp}} \left( \sum \limits _{j=0}^N f_j^p\cdot
2^{\psi(j)}\right)^{\frac 1p}
$$
(if $C_*=1$, then we have exact equalities). Set $\hat
w_j=w_j\cdot 2^{\frac{\psi(j)}{q}}$, $\hat u_j=u_j\cdot
2^{-\frac{\psi(j)}{p}}$, $0\le j< N+1$. Then
$\overline{\mathfrak{S}}^{p,q}_{{\cal
A},u,w}\underset{C_*}{\asymp} \mathfrak{S}^{p,q}_{\hat u,\hat w}$,
where $\mathfrak{S}^{p,q}_{\hat u,\hat w}$ is the minimal constant
in the inequality
$$
\left(\sum \limits _{j=0}^N \hat w_j^q\left( \sum \limits _{i=0}^j
\hat u_j \varphi_j\right)^q\right)^{\frac 1q} \le C\left(\sum
\limits _{j=0}^N \varphi_j^p\right)^{\frac 1p}, \quad \varphi_j\ge
0, \quad 0\le j< N+1.
$$
Moreover, if $C_*=1$, then we have the exact equality.

\begin{Trm}
\label{p_ge_q} Let $p\ge q$. Then $\mathfrak{S}^{p,q}_{{\cal
A},u,w} \underset{p,q,C_*}{\asymp} \mathfrak{S}^{p,q}_{\hat u,\hat
w}$; if $C_*=1$, then $\mathfrak{S}^{p,q}_{{\cal A},u,w} =
\mathfrak{S}^{p,q}_{\hat u,\hat w}$.
\end{Trm}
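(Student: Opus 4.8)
The plan is as follows. The estimate $\overline{\mathfrak{S}}^{p,q}_{{\cal A},u,w}\le\mathfrak{S}^{p,q}_{{\cal A},u,w}$ is immediate, since every non-negative level-constant function competes in the definition of $\mathfrak{S}^{p,q}_{{\cal A},u,w}$; and the reduction carried out just before the theorem already identifies $\overline{\mathfrak{S}}^{p,q}_{{\cal A},u,w}\underset{C_*}{\asymp}\mathfrak{S}^{p,q}_{\hat u,\hat w}$, with equality when $C_*=1$. Hence everything comes down to the reverse bound $\mathfrak{S}^{p,q}_{{\cal A},u,w}\underset{p,q,C_*}{\lesssim}\overline{\mathfrak{S}}^{p,q}_{{\cal A},u,w}$ (and to the matching equality when $C_*=1$). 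First I would note that, since replacing $f$ by $|f|$ does not decrease $\bigl|S_{u,w,{\cal A}}f(\xi)\bigr|$ at any vertex $\xi$ while preserving $\|f\|_{l_p({\cal A})}$, it is enough to treat $f\ge0$.

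Given such an $f$, the key idea is to \emph{symmetrize} it within each level. I would set, for $\xi\in{\bf V}_j^{\cal A}(\xi_0)$,
$$
g(\xi)=g_j:=\left(\frac{1}{{\rm card}\,{\bf V}_j^{\cal A}(\xi_0)}\sum_{\eta\in{\bf V}_j^{\cal A}(\xi_0)}f(\eta)^p\right)^{1/p},
$$
the $l_p$-mean of $f$ over the $j$-th level, so that the $l_p$-norms coincide level by level and $\|g\|_{l_p({\cal A})}=\|f\|_{l_p({\cal A})}$. Feeding this level-constant $g$ into the definition of $\overline{\mathfrak{S}}^{p,q}_{{\cal A},u,w}$ yields $\|S_{u,w,{\cal A}}g\|_{l_q({\cal A})}\le\overline{\mathfrak{S}}^{p,q}_{{\cal A},u,w}\|f\|_{l_p({\cal A})}$. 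Thus the whole theorem will follow once I establish the single estimate $\|S_{u,w,{\cal A}}f\|_{l_q({\cal A})}\underset{q,C_*}{\lesssim}\|S_{u,w,{\cal A}}g\|_{l_q({\cal A})}$, which I would prove separately on each level.

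For the key step, fix $j$ and write $\xi^{(i)}$ for the ancestor of $\xi\in{\bf V}_j^{\cal A}(\xi_0)$ lying in ${\bf V}_i^{\cal A}(\xi_0)$. Viewing the level-$j$ part of $\|S_{u,w,{\cal A}}f\|_{l_q({\cal A})}$ as the $L^q$-norm of $\xi\mapsto\sum_{i=0}^j u_if(\xi^{(i)})$ with respect to the uniform probability measure on ${\bf V}_j^{\cal A}(\xi_0)$, I would first apply Minkowski's inequality (valid because $q\ge1$) to take the sum over $i$ outside the norm. For each fixed $i$ I would then evaluate the $L^q$-mean of $f(\cdot^{(i)})$ by grouping the vertices $\xi$ according to their ancestor $\eta=\xi^{(i)}$: each $\eta\in{\bf V}_i^{\cal A}(\xi_0)$ occurs ${\rm card}\,{\bf V}_{j-i}^{\cal A}(\eta)$ times, and inserting the two-sided bounds (\ref{cvjj0}) for this multiplicity and for ${\rm card}\,{\bf V}_j^{\cal A}(\xi_0)$ shows that this $L^q$-mean is $\underset{C_*}{\asymp}$ the $l_q$-mean of $f$ over the $i$-th level (an exact identity when $C_*=1$). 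Finally, since $q\le p$, the power-mean inequality bounds the $l_q$-mean by the $l_p$-mean $g_i$. Assembling these three steps gives, for every $j$,
$$
\left(\frac{1}{{\rm card}\,{\bf V}_j^{\cal A}(\xi_0)}\sum_{\xi\in{\bf V}_j^{\cal A}(\xi_0)}\left(\sum_{i=0}^j u_if(\xi^{(i)})\right)^q\right)^{1/q}\underset{C_*}{\lesssim}\sum_{i=0}^j u_ig_i,
$$
and multiplying by $w_j^q\,{\rm card}\,{\bf V}_j^{\cal A}(\xi_0)$ and summing over $j$ delivers the required estimate.

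The hard part will be to make the two averaging inequalities cooperate: Minkowski's inequality forces $q\ge1$, while the transition from the $l_q$-mean to the $l_p$-mean is precisely where the hypothesis $p\ge q$ enters, through the power-mean inequality; if $p<q$ this transition would go the wrong way and the symmetrization would fail. Since both inequalities preserve ``$\le$'' and, for $C_*=1$, all the counting relations in (\ref{cvjj0}) become exact equalities, the key estimate then holds with constant $1$; together with the trivial reverse inequality and the exact equality $\overline{\mathfrak{S}}^{p,q}_{{\cal A},u,w}=\mathfrak{S}^{p,q}_{\hat u,\hat w}$ this gives $\mathfrak{S}^{p,q}_{{\cal A},u,w}=\mathfrak{S}^{p,q}_{\hat u,\hat w}$ in that case. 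The endpoint $p=\infty$ requires only the obvious adjustment of replacing the $l_p$-mean by the supremum over each level.
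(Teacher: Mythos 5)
Your proposal is correct, and it takes a genuinely different route from the paper's. The paper proves the nontrivial inequality $\mathfrak{S}^{p,q}_{{\cal A},u,w}\underset{p,q,C_*}{\lesssim}\mathfrak{S}^{p,q}_{\hat u,\hat w}$ structurally: iterating Lemma \ref{razv}, it splits every vertex with $n$ children into $n$ copies (renormalizing the weights by $n^{1/p}$ and $n^{-1/q}$), thereby unfolding ${\cal A}$ into a disjoint union of $m_*={\rm card}\,{\bf V}_N^{\cal A}(\xi_0)$ identical paths carrying the weights $\tilde u_i=u_i\cdot 2^{(\psi(N)-\psi(i))/p}$, $\tilde w_i=w_i\cdot 2^{-(\psi(N)-\psi(i))/q}$, each splitting not decreasing the operator norm; on this union of paths it then shows that the functional ${\cal F}(\varphi)$, $\varphi=f^p$, is concave (concavity of $t\mapsto t^{q/p}$ plus the inverse Minkowski inequality --- this is where $p\ge q$ enters) and invariant under permutations of the paths, so the supremum is realized by a test function equal on all paths, i.e.\ by the one-dimensional problem for $\hat u,\hat w$. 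You instead symmetrize the test function on the original tree, replacing $f$ by its level-wise $l_p$-mean $g$, and compare $\|S_{u,w,{\cal A}}f\|_{l_q({\cal A})}$ with $\|S_{u,w,{\cal A}}g\|_{l_q({\cal A})}$ level by level via (i) the triangle inequality in $L^q$ of the uniform probability measure on a level (needs $q\ge1$), (ii) the counting condition (\ref{cvjj0}) to pass from the mean of $f(\cdot^{(i)})$ over level $j$ to the $l_q$-mean of $f$ over level $i$ (using that level $j$ is the disjoint union of the sets ${\bf V}_{j-i}^{\cal A}(\eta)$, $\eta\in{\bf V}_i^{\cal A}(\xi_0)$, of nearly equal cardinality), and (iii) the power-mean inequality (your point of entry of $p\ge q$). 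All three steps are sound; moreover, when $C_*=1$ step (ii) is an exact identity while (i) and (iii) hold with constant $1$, so the equality assertion survives, as you say. Your route is more economical: no graph surgery, no induction over the trees ${\cal G}_{j,{\cal A}}$, and it handles $N=\infty$ directly, whereas the paper reduces to $N<\infty$. What the paper's route buys is a reusable splitting lemma valid for all $1\le p,\,q\le\infty$, which it then invokes essentially verbatim for the integration-operator analogue on metric trees at the end of the section. One small point to tidy up: at the endpoint $q=\infty$ (hence $p=\infty$) the Minkowski step must also be read as a sup-of-sums bounded by the sum-of-sups, not only the $l_p$-mean replaced by a supremum; this adjustment is immediate.
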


The estimates of $\mathfrak{S}^{p,q}_{\hat u,\hat w}$ were
obtained by Heinig, Andersen and Bennett \cite{and_hein, hein1,
bennett_g}.

\begin{trma}
\label{hardy_diskr} Let $1\le p, \, q\le \infty$, and let $\{u_n\}_{n\in
\Z_+}$, $\{w_n\}_{n\in \Z_+}$ be non-negative sequences such that
$$
M_{u,w}:=\sup _{m\in \Z_+}\Bigl(\sum \limits _{n=m}^\infty
w_n^q\Bigr)^{\frac 1q}\Bigl( \sum \limits _{n=0}^m
u_n^{p'}\Bigr)^{\frac{1}{p'}}<\infty\quad\text{for}\quad 1<p\le
q<\infty,
$$
$$
M_{u,w}:=\left(\sum \limits _{m=0}^\infty \left(\Bigl(\sum \limits
_{n=m}^\infty w_n^q\Bigr)^{\frac 1p}\Bigl(\sum \limits _{n=0}^m
u_n^{p'}\Bigr)^{\frac{1}{p'}}\right)
^{\frac{pq}{p-q}}w_m^q\right)^{\frac 1q-\frac
1p}<\infty\quad\text{for}\quad 1\le q<p\le \infty.
$$
Let $\mathfrak{S}^{p,q}_{u,w}$ be the minimal constant in the
inequality
$$
\left(\sum \limits _{n=0}^\infty\left|w_n\sum \limits _{k=0}^n
u_kf_k\right|^q\right)^{1/q}\le C\left( \sum \limits _{n\in
\Z_+}|f_n|^p\right)^{1/p}, \quad \{f_n\}_{n\in\Z_+}\in l_p.
$$
Then $\mathfrak{S}^{p,q}_{u,w} \underset{p,q}{\asymp} M_{u,w}$.
\end{trma}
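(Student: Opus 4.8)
The plan is to reduce to nonnegative $f$ (replacing $f_k$ by $|f_k|$ only enlarges the left-hand side), so that the partial sums $G_n:=\sum_{k=0}^nu_kf_k$ are nondecreasing, and to treat both regimes through one discretization. Write $U_n=\bigl(\sum_{k=0}^nu_k^{p'}\bigr)^{1/p'}$ and $W_n=\bigl(\sum_{k=n}^\infty w_k^q\bigr)^{1/q}$, and (after truncating and letting the truncation tend to infinity) assume $\sum_ku_k^{p'}=\infty$. I would split $\Z_+$ into consecutive blocks $I_j=\{n:\,n_{j-1}<n\le n_j\}$ chosen so that $\sum_{k\in I_j}u_k^{p'}\asymp2^j$, i.e. $U_{n_j}^{p'}\asymp2^j$. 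Hölder's inequality on each block gives $\sum_{k\in I_i}u_kf_k\lesssim2^{i/p'}a_i$ with $a_i:=\bigl(\sum_{k\in I_i}f_k^p\bigr)^{1/p}$ and $\sum_ia_i^p=\|f\|_{l_p}^p$, hence $G_{n_j}\lesssim\sum_{i\le j}2^{i/p'}a_i$; since $G_n$ is nondecreasing on each block, one obtains the common core estimate
\[
\sum_nw_n^qG_n^q\le\sum_j\omega_jG_{n_j}^q\lesssim\sum_j\omega_j\Bigl(\sum_{i\le j}2^{i/p'}a_i\Bigr)^q,\qquad\omega_j:=\sum_{n\in I_j}w_n^q.
\]
Everything then reduces to a model Hardy inequality for the sequence $(a_i)$ with the geometric inner weight $2^{i/p'}$, and the two regimes differ only in how this model inequality is summed.

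For the lower bounds I would use test functions. When $1<p\le q<\infty$, taking $f_k=u_k^{p'-1}$ for $k\le m$ and $f_k=0$ otherwise gives $G_n=U_m^{p'}$ for $n\ge m$ and $\|f\|_{l_p}=U_m^{p'/p}$, so the inequality forces $C\ge U_mW_m$; taking the supremum over $m$ yields $C\gtrsim M_{u,w}$. When $1\le q<p\le\infty$ the matching lower bound comes from a suitably normalized extremal sequence built from $u_m^{p'-1}$ and powers of $U_m$ and $W_m$ that saturates the Hölder steps of the core estimate; substituting it into both sides reproduces the full sum $M_{u,w}$.

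In the regime $1<p\le q<\infty$ the upper bound closes cleanly. Set $c_j:=2^{-j/p'}\sum_{i\le j}2^{i/p'}a_i=\sum_{i\le j}2^{-(j-i)/p'}a_i$, the convolution of $(a_i)$ with the kernel $2^{-m/p'}\mathbf{1}_{m\ge0}$, which lies in $l_1$ precisely because $p>1$; Young's inequality gives $\|c\|_{l_p}\lesssim\|a\|_{l_p}$. The Muckenhoupt condition, read at the left endpoints of the blocks, bounds $\omega_j\lesssim\bigl(M_{u,w}2^{-j/p'}\bigr)^q$, so from the core estimate $\sum_nw_n^qG_n^q\lesssim M_{u,w}^q\sum_jc_j^q=M_{u,w}^q\|c\|_{l_q}^q\le M_{u,w}^q\|c\|_{l_p}^q\lesssim M_{u,w}^q\|f\|_{l_p}^q$, the inequality $\|c\|_{l_q}\le\|c\|_{l_p}$ being exactly the embedding $l_p\hookrightarrow l_q$ valid for $q\ge p$. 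Together with the lower bound this gives $\mathfrak{S}^{p,q}_{u,w}\underset{p,q}{\asymp}M_{u,w}$.

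The case $1\le q<p\le\infty$ is where I expect the real work. Now the embedding runs the wrong way, and one must bound the model quantity $\sum_j\omega_jS_j^q$ (with $S_j=\sum_{i\le j}2^{i/p'}a_i$) directly. I would dualize in $l_q$: writing $\Omega_i:=\sum_{l\ge i}\omega_l$ and $H_i:=\sum_{j\ge i}\omega_j^{1/q}h_j$, for every $h\ge0$ with $\|h\|_{l_{q'}}\le1$ a rearrangement of the summation order followed by Hölder at $(p,p')$ gives
\[
\sum_j\omega_j^{1/q}S_jh_j=\sum_i2^{i/p'}a_iH_i\le\|a\|_{l_p}\Bigl(\sum_i2^iH_i^{p'}\Bigr)^{1/p'},
\]
and taking the supremum over $h$ reconstructs $\bigl(\sum_j\omega_jS_j^q\bigr)^{1/q}$ on the left. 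Matters thus reduce to the backward Hardy bound $\sum_i2^iH_i^{p'}\lesssim M_{u,w}^{p'}$ under $\|h\|_{l_{q'}}\le1$. Since $p'<q'$ this is once more an inequality with output index below input index, and it is the technical heart of the theorem: I would establish it by summation by parts on $H_i$ together with Hölder at the exponent $r=pq/(p-q)$ (so that $1/q-1/p=1/r$), the geometric growth of the factor $2^i$ being exactly what lets the emerging sum be reabsorbed and identified with the discretized constant $M_{u,w}^r\asymp\sum_j\Omega_j^{r/p}2^{jr/p'}\omega_j$. Finally, the boundary cases $p=1$, $p=\infty$, or $q=\infty$ follow from elementary direct estimates under the usual conventions $p'=\infty$ and $\|\cdot\|_{l_\infty}=\sup$.
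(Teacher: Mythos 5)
First, an important point of reference: the paper does not prove this statement at all. It is Theorem A, imported verbatim with citations to Andersen--Heinig and Bennett and used as a black box in the proof of Theorem 2, so there is no internal proof to compare yours against; it has to stand against the classical arguments. In the range $1<p\le q<\infty$ it does: your block decomposition along the dyadic levels of $\sum_{k}u_k^{p'}$, the core estimate, Young's inequality for the convolution with the kernel $2^{-m/p'}\mathbf{1}_{m\ge 0}$, the embedding $l_p\hookrightarrow l_q$, and the Muckenhoupt condition read at the left endpoints of the blocks give a complete and correct upper bound, and the test sequence $f_k=u_k^{p'-1}$, $k\le m$, gives the matching lower bound. (One small repair: a two-sided condition $\sum_{k\in I_j}u_k^{p'}\asymp 2^j$ cannot always be arranged when single terms $u_k^{p'}$ are huge; one should take $n_j=\max\{n:\,U_n^{p'}<2^{j+1}\}$ over the achieved dyadic scales $j$, which yields exactly the one-sided bounds $\sum_{k\in I_j}u_k^{p'}\lesssim 2^{j}$ and $U_{n_j}^{p'}\asymp 2^j$ that your argument actually uses.)

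The genuine gap is the case $1\le q<p\le\infty$, in both directions. For the upper bound, the reduction by $l_q$-duality to the backward inequality $\sum_i 2^iH_i^{p'}\lesssim M_{u,w}^{p'}$ for $\|h\|_{l_{q'}}\le 1$ is fine, but what follows -- ``summation by parts, H\"older at exponent $r$, reabsorption'' -- is a list of tools, not a proof, and you yourself call it the technical heart. The plan is in fact workable: summation by parts gives $\sum_i2^iH_i^{p'}\lesssim\sum_i2^iH_i^{p'-1}\omega_i^{1/q}h_i$; writing $2^iH_i^{p'-1}\omega_i^{1/q}h_i=(2^{i/p'}H_i)^{p'-1}(2^{i/p'}\omega_i^{1/q})h_i$ and applying H\"older with the exponent triple $\bigl(p'/(p'-1),\,r,\,q'\bigr)$ (note $\frac1p+\frac1r+\frac1{q'}=1$) bounds this by $\bigl(\sum_i2^iH_i^{p'}\bigr)^{1/p}\bigl(\sum_i2^{ir/p'}\omega_i^{r/q}\bigr)^{1/r}$, after which the first factor is absorbed (this needs an a priori finiteness argument, e.g.\ finitely supported $h$). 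But one must then still prove $\sum_i2^{ir/p'}\omega_i^{r/q}\lesssim M_{u,w}^r$, i.e.\ your asserted identification of the discretized constant with $M_{u,w}$; this is not automatic and requires, for instance, the elementary inequality $\sum_m V_m^{\alpha}(V_m-V_{m+1})\ge c_\alpha V_{m_0}^{\alpha+1}$ applied to the tails of $w^q$ inside each block. None of this appears in your text. More seriously, the lower bound for $q<p$ is absent: no test sequence is written down and nothing is substituted. This direction is not routine, because the obvious truncated test functions $f_k=u_k^{p'-1}$, $k\le m$, only yield $\mathfrak{S}^{p,q}_{u,w}\gtrsim\sup_m U_mW_m$, and for $q<p$ finiteness of $\sup_mU_mW_m$ is strictly weaker than $M_{u,w}<\infty$ (take $u_n\equiv1$ and $w_n^q\asymp n^{-q/p'-1}$: then $\sup_mU_mW_m<\infty$ while $M_{u,w}=\infty$ and the operator is unbounded). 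So the necessity half in this range requires an essentially different test sequence, roughly $f_n=u_n^{p'-1}U_n^aW_n^b$ with exponents tuned to $r=pq/(p-q)$ and with care about the discrete jumps, or a duality argument; as written, your proposal proves the theorem only for $1<p\le q<\infty$.
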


\section{The discrete analogue of Evans -- Harris -- Pick theorem}
\label{sect_ev}

Let $(\cal{T}, \, \xi_*)$ be a tree, let $\Delta:{\bf
E}({\cal{T}})\rightarrow 2^{\R}$ be a mapping such that for any
$\lambda\in {\bf E}(\cal{T})$ the set $\Delta(\lambda)=[a_\lambda,
\, b_\lambda]$ is a non-degenerate segment. By a metric tree we
mean
$$
\mathbb{T}=({\cal T}, \, \Delta)=\{(t, \, \lambda):\, t\in
[a_\lambda, \, b_\lambda], \; \lambda\in {\bf E}({\cal{T}})\};
$$
here we suppose that if $\xi'\in {\bf V}_1(\xi)$, $\xi''\in {\bf
V}_1(\xi')$, $\lambda=(\xi, \, \xi')$, $\lambda'=(\xi', \,
\xi'')$, then $(b_\lambda, \, \lambda)=(a_{\lambda'}, \,
\lambda')$. The distance between points of $\mathbb{T}$ is defined
as follows: if $(\xi_0, \, \xi_1, \, \dots, \, \xi_n)$ is a simple
path in the tree $\mathcal{T}$, $n\ge 2$, $\lambda_i=(\xi_{i-1},
\, \xi_i)$, $x=(t_1, \, \lambda_1)$, $y=(t_n, \, \lambda_n)$, then
we set
\begin{align}
\label{yx_tt} |y-x|_{\mathbb{T}}=|b_{\lambda_1}-t_1|+\sum \limits
_{i=2}^{n-1} |b_{\lambda_i}-a_{\lambda_i}| +|t_n-a_{\lambda_n}|;
\end{align}
if $x=(t', \, \lambda)$, $y=(t'', \, \lambda)$, then
$|y-x|_{\mathbb{T}}=|t'-t''|$.

We say that $\lambda<\lambda'$ if $\lambda=(\omega, \, \xi)$,
$\lambda'=(\omega', \, \xi')$ and $\xi\le \omega'$; $(t', \,
\lambda')\le(t'', \, \lambda'')$ if $\lambda'<\lambda''$ or
$\lambda'=\lambda''$, $t'\le t''$. If $(t', \, \lambda')\le
(t'', \, \lambda'')$ and $(t', \, \lambda') \ne (t'', \,
\lambda'')$, then we write $(t', \, \lambda')<(t'', \,
\lambda'')$. For $a$, $x\in \mathbb{T}$, $a\le x$, we set
$[a, \, x]=\{y\in \mathbb{T}:\, a\le y\le x\}$.

Let $A_\lambda\subset \Delta(\lambda)$, $\lambda\in {\bf
E}(\mathcal{T})$. We say that the subset $\mathbb{A}=\{(t, \,
\lambda):\; \lambda\in {\bf E}(\mathcal{T}), \; t\in A_\lambda\}$
is measurable if $A_\lambda$ is measurable for any $\lambda\in
{\bf E}(\mathcal{T})$. Its Lebesgue measure is defined by
$$
{\rm mes}\, \mathbb{A}=\sum \limits _{\lambda\in {\bf
E}(\mathcal{T})} {\rm mes}\, A_\lambda.
$$

Let $f:\mathbb{A}\rightarrow \R$. For $\lambda\in {\bf
E}(\mathcal{T})$ define the function $f_\lambda:A_\lambda
\rightarrow \R$ by $f_\lambda(t)=f(t, \, \lambda)$. We say that
the function $f:\mathbb{A}\rightarrow \R$ is Lebesgue integrable
if $f_\lambda$ is Lebesgue integrable for any $\lambda\in {\bf
E}(\mathcal{T})$ and $\sum \limits _{\lambda\in {\bf
E}(\mathcal{T})} \int \limits _{A_\lambda} |f_\lambda(t)|\,
dt<\infty$, and write
$$
\int \limits _{\mathbb{A}} f(x)\, dx=\sum \limits _{\lambda \in
{\bf E}(\mathcal{T})} \int \limits _{A_\lambda} f_\lambda(t)\, dt.
$$

Let $\mathbb{D}\subset \mathbb{T}$ be a connected set. Denote by
${\cal T}_{\mathbb{D}}$ the maximal subtree in $\cal{T}$ such that
for any $\lambda \in {\bf E}({\cal T}_{\mathbb{D}})$ the set
$\{t\in \Delta(\lambda):\; (t, \, \lambda)\in \mathbb{D}\}$ is a
non-degenerated segment. Define $\Delta_{\mathbb{D}}(\lambda)$ as
follows: $\Delta_{\mathbb{D}}(\lambda)=\{t\in \Delta(\lambda):\;
(t, \, \lambda)\in \mathbb{D}\}$, $\lambda \in {\bf E}({\cal
T}_{\mathbb{D}})$. Then $({\cal T}_{\mathbb{D}}, \,
\Delta_{\mathbb{D}})$ is a metric tree. We identify it with the
set $\mathbb{D}$ and call it a metric subtree of $\mathbb{T}$.

Let $\mathbb{D}$ be a metric subtree in $\mathbb{T}$. We say that
the point $x\in \mathbb{D}$ is maximal (minimal) if $y\in
\mathbb{T}\backslash \mathbb{D}$ for any $y>x$ (for any $y<x$,
respectively). Denote by $\mathbb{D}_{\max}$ the set of maximal
points in $\mathbb{D}$.

Let $\mathbb{T}=({\cal T}, \, \Delta)$ be a  metric tree,
$x_0\in \mathbb{T}$, let $u$, $w:\mathbb{T}\rightarrow \R_+$
be measurable functions. Set $\mathbb{T}_{x_0}=\{x\in \mathbb{T}:\; x\ge
x_0\}$,
\begin{align}
\label{i_uw} I_{u,w,x_0}f(x)=w(x)\int \limits_{[x_0, \,  x]}
u(t)f(t)\, dt, \quad x\in \mathbb{T}_{x_0}.
\end{align}

Suppose that ${\bf V}({\cal T})$ is finite.

\label{j_x0}Denote by ${\cal J}_{x_0}={\cal J}_{x_0}(\mathbb{T})$
the family of metric subtrees $\mathbb{D}\subset \mathbb{T}$
with the following properties:
\begin{enumerate}
\item $x_0$ is a minimal point in $\mathbb{D}$;
\item if $x\in \partial \mathbb{D}\backslash \{x_0\}$, then
$x$ is maximal in $\mathbb{D}$.
\end{enumerate}

{\bf Example.} Let the metric tree $\mathbb{T}=({\cal T}, \, \Delta)$
be defined as follows. The vertex $\xi_0$ is a root of ${\cal T}$,
${\bf V}_1(\xi_0)=\{\xi_1\}$, ${\bf
V}_1(\xi_1)=\{\xi_2, \, \xi_3\}$, $\lambda_1=(\xi_0, \, \xi_1)$,
$\lambda_2=(\xi_1, \, \xi_2)$, $\lambda_3=(\xi_1, \, \xi_3)$,
$\Delta(\lambda_i)=[0, \, 1]$, $i=1, \, 2, \, 3$. Let
$\mathbb{D}=\{(t, \, \lambda_i):\, t\in [0, \, 1], \, i=1, \,
2\}$, $x_0=(0, \, \lambda_1)$. Then ${\cal D}\notin {\cal
J}_{x_0}(\mathbb{T})$. Indeed, $(1, \, \lambda_1)=(0,
\, \lambda_2)$ is a boundary point, but it is not maximal.

Given a subtree $\mathbb{D}\subset \mathbb{T}$, we denote by
$L_p^{{\rm discr}}(\mathbb{D})$ the set of functions $\phi:
\mathbb{D}\rightarrow \R$ that are constants on each edge
of $\mathbb{D}$.

Let $\mathbb{D}\in {\cal J}_{x_0}(\mathbb{T})$, $\partial
\mathbb{D} \backslash \{x_0\}\subset G\subset \mathbb{D}_{\max}$.
We write
\begin{align}
\label{ad} \alpha_{\mathbb{D},G}=\inf
\left\{\|f\|_{L_p(\mathbb{D})}:f\in L_p(\mathbb{D}),\, \int
\limits _{[x_0, \, t]} |f(x)|u(x)\, dx=1\quad \forall t\in
G\right\},
\end{align}
$$
\alpha^{\rm discr}_{\mathbb{D},G}= \inf \left\{
\|f\|_{L_p(\mathbb{D})}:f\in L_p^{\rm discr}(\mathbb{D}),\, \int
\limits _{[x_0, \, t]} |f(x)|u(x)\, dx=1\quad \forall t\in
G\right\},
$$
\begin{align}
\label{a_d} \alpha _{\mathbb{D}}=\alpha_{\mathbb{D},\partial
\mathbb{D} \backslash \{x_0\}}, \quad \alpha _{\mathbb{D}} ^{\rm
discr}=\alpha^{\rm discr}_{\mathbb{D},\partial \mathbb{D}
\backslash \{x_0\}}.
\end{align}

\begin{Rem} \label{rem}
If the function $u$ is a constant on each edge of
$\mathbb{D}$, then $\alpha_{\mathbb{D},G}=\alpha^{\rm
discr}_{\mathbb{D},G}$.
\end{Rem}

The following result was proved by Evans, Harris and Pick
\cite{ev_har_pick}.
\begin{trma}
\label{cr_har} Let $1\le p\le q\le \infty$. Then the operator
$I_{u,w,x_0}:L_p(\mathbb{T}_{x_0})\rightarrow
L_q(\mathbb{T}_{x_0})$ is bounded if and only if
\begin{align}
\label{c_uw} C_{u,w}:=\sup _{\mathbb{D}\in {\cal
J}_{x_0}}\frac{\|w\chi_{\mathbb{T}_{x_0}\backslash \mathbb{D}}\|
_{L_q(\mathbb{T}_{x_0})}}{\alpha _{\mathbb{D}}}<\infty.
\end{align}
Moreover, $C_{u,w}\le
\|I_{u,w}\|_{L_p(\mathbb{T}_{x_0})\rightarrow
L_q(\mathbb{T}_{x_0})}\le 4C_{u,w}$.
\end{trma}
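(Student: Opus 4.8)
The plan is to prove the two-sided estimate $C_{u,w}\le \|I_{u,w,x_0}\|\le 4C_{u,w}$ by treating the two inequalities by different methods: the lower bound is a direct test-function argument, while the upper bound rests on a dyadic decomposition of $\mathbb{T}_{x_0}$ by the level sets of the primitive $F(x)=\int_{[x_0,x]}u(t)f(t)\,dt$. Throughout, the structural properties defining $\mathcal{J}_{x_0}$ (that $x_0$ is minimal and every other boundary point is maximal) are what make the geometry work.

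For the lower bound $C_{u,w}\le\|I_{u,w,x_0}\|$, I would fix $\mathbb{D}\in\mathcal{J}_{x_0}$ and take $f\in L_p(\mathbb{D})$, extended by zero, that nearly attains $\alpha_{\mathbb{D}}$, so that $\int_{[x_0,t]}|f|u=1$ for every $t\in\partial\mathbb{D}\setminus\{x_0\}$. The key observation is that any $x\in\mathbb{T}_{x_0}\setminus\mathbb{D}$ lies above some maximal boundary point $t$ of $\mathbb{D}$: the path from $x_0$ to $x$ starts inside $\mathbb{D}$ and must leave it through a point $t\in\partial\mathbb{D}\setminus\{x_0\}$, which is maximal by property 2 in the definition of $\mathcal{J}_{x_0}$. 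Hence $\int_{[x_0,x]}|f|u\ge\int_{[x_0,t]}|f|u=1$ and so $I_{u,w,x_0}|f|(x)\ge w(x)$ on $\mathbb{T}_{x_0}\setminus\mathbb{D}$. This gives $\|w\chi_{\mathbb{T}_{x_0}\setminus\mathbb{D}}\|_{L_q}\le\|I_{u,w,x_0}|f|\|_{L_q}\le\|I_{u,w,x_0}\|\cdot\|f\|_{L_p}$; taking the infimum over admissible $f$ and the supremum over $\mathbb{D}$ yields the claim.

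For the upper bound, fix $f\ge 0$ and note that $F$ is continuous and nondecreasing along every path issuing from $x_0$. I would introduce the level sets $\mathbb{D}_k=\{x\in\mathbb{T}_{x_0}:F(x)<2^k\}$ and the annuli $A_k=\mathbb{D}_{k+1}\setminus\mathbb{D}_k=\{2^k\le F<2^{k+1}\}$. By monotonicity each $\mathbb{D}_k$ is an order ideal containing $x_0$ whose boundary points other than $x_0$ are maximal and carry $F=2^k$, so $\mathbb{D}_k\in\mathcal{J}_{x_0}$. The crucial step is that $g_k=2^{-(k-1)}f\chi_{A_{k-1}}$ is admissible for $\alpha_{\mathbb{D}_k}$: for any maximal boundary point $t$ of $\mathbb{D}_k$, the portion of $[x_0,t]$ lying in $A_{k-1}$ contributes exactly $F(t)-2^{k-1}=2^{k-1}$, so $\int_{[x_0,t]}g_ku=1$. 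This yields $\alpha_{\mathbb{D}_k}\le 2^{-(k-1)}\|f\chi_{A_{k-1}}\|_{L_p}$. Using $F\le 2^{k+1}$ on $A_k$, the inclusion $A_k\subset\mathbb{T}_{x_0}\setminus\mathbb{D}_k$, and the definition of $C_{u,w}$, I obtain
$$\|I_{u,w,x_0}f\|_{L_q}^q=\sum_k\int_{A_k}w^qF^q\le\sum_k 2^{(k+1)q}\,C_{u,w}^q\,\alpha_{\mathbb{D}_k}^q\le 4^q\,C_{u,w}^q\sum_k\|f\chi_{A_{k-1}}\|_{L_p}^q,$$
where the constant $4^q=2^{(k+1)q}\cdot 2^{-(k-1)q}$ is exactly where the factor $4$ is born. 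Since $p\le q$ and the annuli are pairwise disjoint and essentially cover the support of $f$, the embedding $\ell^p\hookrightarrow\ell^q$ gives $\sum_k\|f\chi_{A_{k-1}}\|_{L_p}^q\le\bigl(\sum_k\|f\chi_{A_{k-1}}\|_{L_p}^p\bigr)^{q/p}=\|f\|_{L_p}^q$, whence $\|I_{u,w,x_0}f\|_{L_q}\le 4C_{u,w}\|f\|_{L_p}$.

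The hard part will be the measure-theoretic and combinatorial verification that each $\mathbb{D}_k$ genuinely belongs to $\mathcal{J}_{x_0}$ and that $g_k$ is genuinely admissible: one must control the behaviour of $F$ at branch vertices and on segments where $uf$ vanishes (so that the boundary points of $\mathbb{D}_k$ really are maximal and carry the value $2^k$), and justify that a path to an exterior point crosses a maximal boundary point. The step $\ell^p\hookrightarrow\ell^q$ is precisely where the hypothesis $p\le q$ enters, and the endpoint cases $p=\infty$ or $q=\infty$ would require replacing the relevant sums by suprema and handling them separately, though these are easier.
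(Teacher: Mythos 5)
The paper contains no proof of this statement: it is quoted as Theorem~\ref{cr_har} from Evans--Harris--Pick \cite{ev_har_pick}, and the only internal material touching it is the derivation of the discrete Lemma~\ref{hardy_cr} \emph{from} it. So your proposal can only be measured against the original argument of \cite{ev_har_pick}, and in substance you are reconstructing an argument of that type: near-extremal test functions for $C_{u,w}\le\|I_{u,w,x_0}\|$, and dyadic level sets of the primitive $F(x)=\int_{[x_0,x]}u f\,dt$ for $\|I_{u,w,x_0}\|\le 4C_{u,w}$. Your upper bound is sound: the admissibility computation giving $\int_{[x_0,t]}u f\chi_{A_{k-1}}\,dx=F(t)-2^{k-1}=2^{k-1}$ at boundary points $t$ of $\mathbb{D}_k$, the factor $2^{k+1}\cdot 2^{-(k-1)}=4$, and the embedding $\ell^p\hookrightarrow\ell^q$ (the only place $p\le q$ enters) are all correct; and the closure/plateau issues you flag are real but repairable, e.g.\ by taking $\mathbb{D}_k=\overline{\{F<2^k\}}$ (whose boundary points other than $x_0$ are then genuinely maximal and carry $F=2^k$, by monotonicity and continuity of $F$ along paths) and the half-open annuli $\{2^k<F\le 2^{k+1}\}$, which are disjoint from $\mathbb{D}_k$.

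The genuine gap is in your lower bound, at the sentence ``the path from $x_0$ to $x$ starts inside $\mathbb{D}$ and must leave it through a point $t\in\partial\mathbb{D}\setminus\{x_0\}$.'' Property~2 in the definition of ${\cal J}_{x_0}$ exempts $x_0$, so nothing prevents the path from leaving $\mathbb{D}$ at $x_0$ itself: this happens exactly when $x_0$ is a branch point of $\mathbb{T}_{x_0}$ and $\mathbb{D}$ contains some but not all edges emanating from $x_0$. For $x$ on an omitted branch, $[x_0,x]\cap\mathbb{D}=\{x_0\}$, the integral $\int_{[x_0,x]}|f|u\,dt$ vanishes for your test function, and the bound $I_{u,w,x_0}|f|(x)\ge w(x)$ fails. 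This is not a removable technicality: with the definitions read literally, $C_{u,w}\le\|I_{u,w,x_0}\|$ is then \emph{false}. Indeed, let $\mathbb{T}_{x_0}$ consist of two unit edges $\lambda_a$, $\lambda_b$ attached to $x_0$, with $u=1$, $w=\delta$ on $\lambda_a$ and $u=\delta$, $w=1$ on $\lambda_b$. The subtree $\mathbb{D}=\lambda_a$ satisfies Properties 1 and 2 (its unique boundary point other than $x_0$ is its top, which is maximal), and $\alpha_{\mathbb{D}}=1$, $\|w\chi_{\mathbb{T}_{x_0}\setminus\mathbb{D}}\|_{L_q}=1$, so $C_{u,w}\ge 1$; but $\|I_{u,w,x_0}\|\le 2\delta$, because on $\lambda_a$ the output carries the factor $w=\delta$, while for $x\in\lambda_b$ the integral over $[x_0,x]$ only meets $u=\delta$. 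Hence your argument (and the theorem itself) requires the additional hypothesis that every $\mathbb{D}\in{\cal J}_{x_0}$ contains an initial segment of each edge at $x_0$ --- automatic when exactly one edge of $\mathbb{T}_{x_0}$ emanates from $x_0$, which is precisely the situation in which the paper applies the theorem (in Lemma~\ref{hardy_cr}, $x_0=(0,\lambda_{\hat\xi})$ is the foot of a single pendant edge). With that assumption made explicit, your exit-point step becomes correct: $\mathbb{D}$ contains a $\mathbb{T}_{x_0}$-neighbourhood of $x_0$, so the last point $t$ of $[x_0,x]$ lying in the (closed) subtree $\mathbb{D}$ belongs to $\partial\mathbb{D}\setminus\{x_0\}$, is therefore maximal, and $\int_{[x_0,x]}|f|u\,dt\ge\int_{[x_0,t]}|f|u\,dt=1$ as you claim.
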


The quantity $\alpha_{\mathbb{D}}$ can be calculated recursively.
The following result is also proved in \cite{ev_har_pick}.
\begin{trma}
\label{rekurs} Let $\mathbb{D}\in {\cal J}_{x_0}$,
$\mathbb{D}=\cup _{j=0}^m \mathbb{D}_j$, $\mathbb{D}_0=[x_0, \,
y_0]$, $x_0<y_0$, $\mathbb{D}_j\in {\cal J}_{y_0}$, $1\le j\le m$,
$\mathbb{D}_i\cap \mathbb{D}_j=\{y_0\}$, $i\ne j$. Then
$$
\frac{1}{\alpha_{\mathbb{D}}}=\left\|\left(\alpha_{\mathbb{D}_0}^{-1},
\, \left\|(\alpha_{\mathbb{D}_i})_{i=1}^m\right\|^{-1}
_{l_p^m}\right)\right\|_{l_{p'}^2}.
$$
\end{trma}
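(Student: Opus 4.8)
The plan is to reduce the computation of $\alpha_{\mathbb{D}}$ to a one-parameter minimization by exploiting the way the constraints are organized along the tree. First I would observe that every boundary point $t\in\partial\mathbb{D}\setminus\{x_0\}$ is maximal and lies in exactly one of the subtrees $\mathbb{D}_j$ ($1\le j\le m$), while the path $[x_0,t]$ splits as $[x_0,y_0]\cup[y_0,t]$ with $[x_0,y_0]=\mathbb{D}_0$. Hence, for an admissible $f$, the constraint $\int_{[x_0,t]}|f(x)|u(x)\,dx=1$ becomes $a+\int_{[y_0,t]}|f(x)|u(x)\,dx=1$, where $a:=\int_{[x_0,y_0]}|f(x)|u(x)\,dx$ is the same for all $t$. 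Since $|f|u\ge0$ we have $a\in[0,1]$, and on each subtree the remaining constraints read $\int_{[y_0,t]}|f(x)|u(x)\,dx=1-a$ for all $t\in\partial\mathbb{D}_j\setminus\{y_0\}$.

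Next I would use that $\mathbb{D}_0,\mathbb{D}_1,\dots,\mathbb{D}_m$ meet only in single points of measure zero, so that $\|f\|_{L_p(\mathbb{D})}^p=\|f\|_{L_p(\mathbb{D}_0)}^p+\sum_{j=1}^m\|f\|_{L_p(\mathbb{D}_j)}^p$ and the minimization decouples across the pieces. By homogeneity of the defining extremal problem, for fixed $a$ the least value of $\|f\|_{L_p(\mathbb{D}_0)}$ subject to $\int_{[x_0,y_0]}|f|u\,dx=a$ equals $a\,\alpha_{\mathbb{D}_0}$, while the least value of $\|f\|_{L_p(\mathbb{D}_j)}$ subject to all its constraints being $1-a$ equals $(1-a)\alpha_{\mathbb{D}_j}$. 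Summing the $p$-th powers gives
$$
\alpha_{\mathbb{D}}^p=\min_{a\in[0,1]}\Bigl(a^p\alpha_{\mathbb{D}_0}^p+(1-a)^p\sum_{j=1}^m\alpha_{\mathbb{D}_j}^p\Bigr)=\min_{a\in[0,1]}\bigl(a^p\alpha_{\mathbb{D}_0}^p+(1-a)^pB^p\bigr),
$$
where $B:=\left\|(\alpha_{\mathbb{D}_i})_{i=1}^m\right\|_{l_p^m}$.

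Finally I would solve this elementary one-variable problem. Differentiating, the interior critical point satisfies $(a/(1-a))^{p-1}=B^p/\alpha_{\mathbb{D}_0}^p$, i.e. $a=B^{p'}/(\alpha_{\mathbb{D}_0}^{p'}+B^{p'})$, which automatically lies in $[0,1]$, so the endpoint constraints are inactive. Substituting and simplifying with the conjugate-exponent identities $p(p'-1)=p'$ and $p/p'=p-1$ yields $\alpha_{\mathbb{D}}^p=(\alpha_{\mathbb{D}_0}^{-p'}+B^{-p'})^{-p/p'}$, equivalently $\alpha_{\mathbb{D}}^{-1}=(\alpha_{\mathbb{D}_0}^{-p'}+B^{-p'})^{1/p'}=\left\|(\alpha_{\mathbb{D}_0}^{-1},B^{-1})\right\|_{l_{p'}^2}$, which is the asserted formula.

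I expect the main obstacle to be the rigorous justification of the decoupling in the second step: one must check that the constraints $\int_{[y_0,t]}|f|u\,dx=1-a$ can be realized simultaneously for all boundary points of a fixed subtree while attaining the norm $(1-a)\alpha_{\mathbb{D}_j}$, and, crucially, that the mass $a$ on the initial edge is \emph{forced} to be common to all subtrees (it is the integral over the shared segment $[x_0,y_0]$). It is precisely this structural coupling that collapses the problem to the single scalar $a$ and makes the finite-dimensional $l_p^m$ and $l_{p'}^2$ norms appear; the concluding calculus is then routine.
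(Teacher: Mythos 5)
Your proof is correct, but note first that the paper itself contains no proof of this statement to compare against: Theorem \ref{rekurs} is quoted from Evans, Harris and Pick \cite{ev_har_pick} (``The following result is also proved in \cite{ev_har_pick}''). Your argument is the natural recursive one and is, in essence, the argument behind the cited result: write each constraint as $a+\int_{[y_0,t]}|f|u\,dx=1$ with the shared mass $a=\int_{[x_0,y_0]}|f|u\,dx$, decouple the $L_p$-norm over the pieces, use homogeneity on each piece, and minimize over $a\in[0,1]$. Two details that you flag but do not carry out should be made explicit, and both follow from the hypotheses. First, the constraint-set identifications: since $\mathbb{D}\in{\cal J}_{x_0}$, since each $\mathbb{D}_j$ ($j\ge 1$) lies above $y_0$ (as $y_0$ is its minimal point), and since distinct pieces meet only at $y_0$, the ambient tree $\mathbb{T}$ cannot branch at any point of the open segment $(x_0,\,y_0)$ --- such a branch point would be a non-maximal boundary point of $\mathbb{D}$, contradicting property 2 of ${\cal J}_{x_0}$. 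Hence $\partial\mathbb{D}_0\setminus\{x_0\}=\{y_0\}$, so $\alpha_{\mathbb{D}_0}$ is indeed governed by the single constraint at $y_0$; similarly one checks $\partial\mathbb{D}\cap(\mathbb{D}_j\setminus\{y_0\})=\partial\mathbb{D}_j\setminus\{y_0\}$, so the constraints of $\alpha_{\mathbb{D}}$ restricted to $\mathbb{D}_j$ are exactly those defining $\alpha_{\mathbb{D}_j}$. (Attainment of the infima is not needed: take near-minimizers on each piece and glue; the sum over $j$ is finite because ${\bf V}({\cal T})$ is assumed finite in this section.) Second, your concluding calculus implicitly assumes $1<p<\infty$: for $p=1$ the objective $a\alpha_{\mathbb{D}_0}+(1-a)B$ is affine, the minimum sits at an endpoint, and one gets $\alpha_{\mathbb{D}}=\min(\alpha_{\mathbb{D}_0},\,B)$, which is again $\|(\alpha_{\mathbb{D}_0}^{-1},B^{-1})\|_{l_{\infty}^2}^{-1}$; the case $p=\infty$ needs the analogous modification. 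This is worth a sentence, since the surrounding results allow $1\le p\le q\le\infty$. With these points added, your argument is complete; the critical-point algebra ($a=B^{p'}/(\alpha_{\mathbb{D}_0}^{p'}+B^{p'})$, hence $\alpha_{\mathbb{D}}^{-p'}=\alpha_{\mathbb{D}_0}^{-p'}+B^{-p'}$) checks out.
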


Let ${\bf V}({\cal A})$ be finite. We obtain two-sided estimates
of $\mathfrak{S}^{p,q}_{{\cal A},u,w}$ for $p\le q$.

Let $\hat\xi\in {\bf V}({\cal A})$, ${\cal D}\subset {\cal
A}_{\hat \xi}$, ${\bf V}_{\max}({\cal D})\backslash {\bf V}_{\max}
({\cal A}) \subset \Gamma \subset {\bf V}_{\max}({\cal D})$.
We say that $({\cal D}, \, \Gamma)\in {\cal J}^\circ_{\hat \xi}$
if the following conditions hold:
\begin{enumerate}
\item $\hat \xi$ is minimal in ${\cal D}$,
\item if $\xi\in {\bf V}({\cal D})$ is not maximal in
${\cal D}$, then ${\bf V}_1(\xi)\subset {\bf V}({\cal D})$.
\end{enumerate}
Denote by ${\cal D}_\Gamma$ the subtree of ${\cal D}$ such that
${\bf V}({\cal D}_\Gamma) ={\bf V}({\cal D})\backslash \Gamma$.
Then
\begin{align}
\label{va_x_dg} {\bf V}({\cal A}_{\hat \xi}\backslash {\cal
D}_\Gamma) =\cup _{\xi \in \Gamma} {\bf V}({\cal A}_\xi).
\end{align}
If $({\cal D}, \, \Gamma)\in {\cal J}^\circ_{\hat \xi}$ and
$\Gamma\ne \varnothing$, we write $({\cal D}, \, \Gamma)\in {\cal
J}'_{\hat \xi}$.

For $({\cal D}, \, \Gamma)\in {\cal J}^\circ_{\hat \xi}$ we set
\begin{align}
\label{betad} \beta_{{\cal D},\Gamma}=\inf \left\{ \| \varphi\|
_{l_p({\cal A}_{\hat \xi})}: \, \sum \limits _{\hat \xi\le \xi'\le
\xi} |\varphi(\xi')| u(\xi')=1, \;\; \forall \xi \in
\Gamma\right\}.
\end{align}
Notice that if $\Gamma =\varnothing$, then $\beta_{{\cal
D},\Gamma}=0$.

\begin{Lem}
\label{hardy_cr} Let $p\le q$. Then
$$
\mathfrak{S}^{p,q}_{{\cal A}_{\hat \xi},u,w}\underset{p,q}
{\asymp} \sup _{({\cal D},\,\Gamma)\in {\cal J}'_{\hat \xi}}
\frac{\|w\chi_{{\cal A}_{\hat \xi}\backslash {\cal
D}_\Gamma}\|_{l_q({\cal A}_{\hat \xi})}} {\beta _{{\cal D},\Gamma}}.
$$
\end{Lem}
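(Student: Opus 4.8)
The plan is to derive the upper bound from the Evans--Harris--Pick criterion (Theorem \ref{cr_har}) applied to a metric realization of $\mathcal{A}_{\hat\xi}$, and to obtain the lower bound directly by testing $S_{u,w,\mathcal{A}_{\hat\xi}}$ on the near-extremal functions for $\beta_{\mathcal{D},\Gamma}$. Since $u,w>0$ one has $|S_{u,w,\mathcal{A}_{\hat\xi}}f|\le S_{u,w,\mathcal{A}_{\hat\xi}}|f|$, so throughout it suffices to work with $f\ge0$.

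\emph{Lower bound.} Fix $(\mathcal{D},\Gamma)\in\mathcal{J}'_{\hat\xi}$ and pick $\varphi\ge0$, supported on $\cup_{\xi\in\Gamma}\{\xi':\hat\xi\le\xi'\le\xi\}$, almost optimal in (\ref{betad}): $\|\varphi\|_{l_p(\mathcal{A}_{\hat\xi})}\le(1+\varepsilon)\beta_{\mathcal{D},\Gamma}$ and $\sum_{\hat\xi\le\xi'\le\xi}u(\xi')\varphi(\xi')=1$ for every $\xi\in\Gamma$. If $\eta\ge\xi$ for some $\xi\in\Gamma$, the path from $\hat\xi$ to $\eta$ passes through $\xi$, so $S_{u,w,\mathcal{A}_{\hat\xi}}\varphi(\eta)\ge w(\eta)\sum_{\hat\xi\le\xi'\le\xi}u(\xi')\varphi(\xi')=w(\eta)$. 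Raising to the $q$-th power, summing over $\cup_{\xi\in\Gamma}{\bf V}(\mathcal{A}_\xi)$ and using (\ref{va_x_dg}), I obtain $\|S_{u,w,\mathcal{A}_{\hat\xi}}\varphi\|_{l_q}\ge\|w\chi_{\mathcal{A}_{\hat\xi}\backslash\mathcal{D}_\Gamma}\|_{l_q(\mathcal{A}_{\hat\xi})}$; dividing by $\|\varphi\|_{l_p}$ and letting $\varepsilon\to0$ gives $\mathfrak{S}^{p,q}_{\mathcal{A}_{\hat\xi},u,w}\ge\sup_{(\mathcal{D},\Gamma)\in\mathcal{J}'_{\hat\xi}}\|w\chi_{\mathcal{A}_{\hat\xi}\backslash\mathcal{D}_\Gamma}\|_{l_q}/\beta_{\mathcal{D},\Gamma}$.

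\emph{Upper bound.} I would build a metric tree $\mathbb{T}$ whose edges $\lambda_\xi$ are indexed by the vertices $\xi\ge\hat\xi$, with $\Delta(\lambda_\xi)=[0,1]$ and $\lambda_\xi$ placed immediately above $\lambda_{\xi'}$ exactly when $\xi'$ is the predecessor of $\xi$; set $x_0=(0,\lambda_{\hat\xi})$ (so $\mathbb{T}_{x_0}=\mathbb{T}$) and $\tilde u\equiv u(\xi)$, $\tilde w\equiv w(\xi)$ on $\lambda_\xi$. For $f\ge0$ let $\tilde f$ be the step function equal to $f(\xi)$ on $\lambda_\xi$. Then $\|\tilde f\|_{L_p(\mathbb{T})}=\|f\|_{l_p(\mathcal{A}_{\hat\xi})}$, and on each $\lambda_\xi$ the function $I_{\tilde u,\tilde w,x_0}\tilde f$ is nondecreasing with top value $I_{\tilde u,\tilde w,x_0}\tilde f(1,\lambda_\xi)=S_{u,w,\mathcal{A}_{\hat\xi}}f(\xi)$, whence $\|I_{\tilde u,\tilde w,x_0}\tilde f\|_{L_q(\lambda_\xi)}\ge c_q\,|S_{u,w,\mathcal{A}_{\hat\xi}}f(\xi)|$ for some $c_q>0$. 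Summing the $q$-th powers over edges yields $\mathfrak{S}^{p,q}_{\mathcal{A}_{\hat\xi},u,w}\lesssim_q\|I_{\tilde u,\tilde w,x_0}\|_{L_p(\mathbb{T})\to L_q(\mathbb{T})}$, and by Theorem \ref{cr_har} the latter is at most $4C_{\tilde u,\tilde w}$.

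\emph{The main obstacle} is to bound $C_{\tilde u,\tilde w}$ by the combinatorial supremum. As $\tilde u$ is constant on each edge, Remark \ref{rem} lets me compute every $\alpha_{\mathbb{D}}$ over step functions, turning the constraint $\int_{[x_0,t]}|f|\tilde u=1$ at a maximal point $t$ into a path-sum in $\mathcal{A}$. Given $\mathbb{D}\in\mathcal{J}_{x_0}$ I would snap it to the pair with ${\bf V}(\mathcal{D})=\{\xi:\lambda_\xi\text{ meets }\mathbb{D}\text{ in positive length}\}$ and $\Gamma={\bf V}_{\max}(\mathcal{D})$; the defining property of $\mathcal{J}_{x_0}$ (no boundary point other than $x_0$ is non-maximal) forces condition~2 of $\mathcal{J}^\circ_{\hat\xi}$, so $(\mathcal{D},\Gamma)\in\mathcal{J}'_{\hat\xi}$. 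Two comparisons then close the argument. First, every edge meeting $\mathbb{T}\backslash\mathbb{D}$ is some $\lambda_\eta$ with $\eta\in{\bf V}(\mathcal{A}_\xi)$ for a $\xi\in\Gamma$, and a whole edge is charged to the $w$-sum even when $\mathbb{D}$ cuts it in its interior, so $\|\tilde w\chi_{\mathbb{T}\backslash\mathbb{D}}\|_{L_q}\le\|w\chi_{\mathcal{A}_{\hat\xi}\backslash\mathcal{D}_\Gamma}\|_{l_q}$. Second, if a maximal point of $\mathbb{D}$ sits at height $s\in(0,1]$ inside $\lambda_\xi$, its constraint $\sum_{\hat\xi\le\xi'<\xi}u(\xi')|f(\xi')|+s\,u(\xi)|f(\xi)|=1$ implies $\sum_{\hat\xi\le\xi'\le\xi}u(\xi')|f(\xi')|\ge1$, so every $\alpha_{\mathbb{D}}$-admissible $f$ is admissible for $\beta_{\mathcal{D},\Gamma}$ in the relaxed ($\ge1$) form; since replacing equalities by inequalities leaves $\beta_{\mathcal{D},\Gamma}$ unchanged (a routine saturation argument on the tree), $\beta_{\mathcal{D},\Gamma}\le\alpha_{\mathbb{D}}$. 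Combining the two gives $\|\tilde w\chi_{\mathbb{T}\backslash\mathbb{D}}\|_{L_q}/\alpha_{\mathbb{D}}\le\|w\chi_{\mathcal{A}_{\hat\xi}\backslash\mathcal{D}_\Gamma}\|_{l_q}/\beta_{\mathcal{D},\Gamma}$, hence $C_{\tilde u,\tilde w}\le\sup_{(\mathcal{D},\Gamma)}(\cdots)$, which with the lower bound proves the lemma. The genuinely delicate point is exactly this interior-cut phenomenon: a metric subtree may terminate in the middle of an edge, and one must verify that such a cut only relaxes the $\beta$-constraint while the entire edge is still counted in the $w$-norm, so that the discrete supremum dominates the continuous one.
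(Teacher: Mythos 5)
Your overall route is the same as the paper's: the paper also realizes ${\cal A}_{\hat\xi}$ as a metric tree with unit edges and edge-constant weights, applies Theorem \ref{cr_har}, and converts each $\mathbb{D}\in {\cal J}_{x_0}$ into a discrete pair; your lower bound is essentially the paper's argument verbatim. The genuine gap is in your ``snapping'': you take $\Gamma={\bf V}_{\max}({\cal D})$. But ${\bf V}_{\max}({\cal D})$ may contain leaves of ${\cal A}$ whose \emph{whole} edge lies inside $\mathbb{D}$; the top of such an edge is a maximal point of $\mathbb{D}$ (vacuously, since nothing lies above it) but it is \emph{not} a point of $\partial\mathbb{D}$, and $\alpha_{\mathbb{D}}=\alpha_{\mathbb{D},\partial\mathbb{D}\backslash\{x_0\}}$ imposes no constraint there, whereas $\beta_{{\cal D},\Gamma}$ does. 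Hence your claim that every $\alpha_{\mathbb{D}}$-admissible $f$ satisfies the relaxed $\beta$-constraints is false, and with it the inequality $\beta_{{\cal D},\Gamma}\le\alpha_{\mathbb{D}}$. Concretely: let ${\bf V}({\cal A}_{\hat\xi})=\{\hat\xi,a,b\}$ with $a,b$ leaves, $p=2$, $w\equiv 1$, $u(\hat\xi)=u(a)=\varepsilon$, $u(b)=1$, and let $\mathbb{D}$ consist of the full edges $\lambda_{\hat\xi},\lambda_a$ and the lower half of $\lambda_b$. Then $\partial\mathbb{D}\backslash\{x_0\}$ is the single cut point on $\lambda_b$, and $f=2\chi_{\mathbb{D}\cap\lambda_b}$ gives $\alpha_{\mathbb{D}}\le\sqrt2$; your pair is ${\cal D}={\cal A}_{\hat\xi}$, $\Gamma=\{a,b\}$, and the constraint $\varepsilon\varphi(\hat\xi)+\varepsilon\varphi(a)=1$ forces $\beta_{{\cal D},\Gamma}\ge(\sqrt2\,\varepsilon)^{-1}$. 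So for small $\varepsilon$ the metric ratio $\|\tilde w\chi_{\mathbb{T}\backslash\mathbb{D}}\|_{L_q}/\alpha_{\mathbb{D}}$ stays bounded below by a constant, while your discrete ratio $\|w\chi_{{\cal A}_{\hat\xi}\backslash{\cal D}_\Gamma}\|_{l_q}/\beta_{{\cal D},\Gamma}\le 2^{1/q}\sqrt2\,\varepsilon\to 0$: the per-$\mathbb{D}$ domination your proof rests on fails.

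The repair is local and is exactly what the paper does: put into $\Gamma$ only those vertices $\xi\in{\bf V}({\cal D})$ whose edge $\lambda_\xi$ carries a point of $\partial\mathbb{D}\backslash\{x_0\}$. This $\Gamma$ still contains ${\bf V}_{\max}({\cal D})\backslash{\bf V}_{\max}({\cal A})$ (a maximal vertex of ${\cal D}$ that is not a leaf of ${\cal A}$ always carries a cut point), is contained in ${\bf V}_{\max}({\cal D})$, and is nonempty whenever $\mathbb{D}\ne\mathbb{A}_{x_0}$ (that case has numerator $0$ and can be discarded), so $({\cal D},\Gamma)\in{\cal J}'_{\hat\xi}$; your numerator comparison only becomes easier. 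For the denominator the paper also sidesteps your saturation step: extend an $\alpha_{\mathbb{D}}$-admissible $f$ by zero to the union $\mathbb{D}^+$ of the full edges of ${\cal D}$; the equality constraints are transported to the edge tops lying over $\Gamma$, giving $\alpha_{\mathbb{D}}\ge\alpha_{\mathbb{D}^+,G}=\alpha^{\rm discr}_{\mathbb{D}^+,G}=\beta_{{\cal D},\Gamma}$ by Remark \ref{rem}. (Your saturation lemma for inequality constraints is in fact true on a tree, but it cannot compensate for the spurious constraints created by the wrong $\Gamma$.) The rest of your write-up --- the lower bound, the estimate $\mathfrak{S}^{p,q}_{{\cal A}_{\hat\xi},u,w}\lesssim_q\|I_{\tilde u,\tilde w,x_0}\|_{L_p\to L_q}$, the verification that $({\cal D},\Gamma)\in{\cal J}'_{\hat\xi}$, and the numerator estimate --- is sound and parallels the paper.
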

\begin{proof}
Add a vertex $\xi_*$ to the set ${\bf V}({\cal A})$ and connect it
with $\xi_0$ by an edge. Thus we obtain a tree $({\cal A}^{\#}, \,
\xi_*)$. Define the mapping $\Delta$ by $\Delta(\lambda)=[0, \,
1]$, $\lambda \in \mathbf{E}({\cal A}^{\#})$, and set
${\mathbb{A}}=({\cal A}^{\#}, \, \Delta)$. For each $\xi \in {\bf
V}({\cal A})$ denote by $\lambda_\xi$ an edge of the tree ${\cal
A}^{\#}$ with the head $\xi$ (i.e., $\lambda_\xi=(\xi', \, \xi)$,
$\xi'<\xi$). Given a function $\psi:{\bf V}({\cal A})\rightarrow
\R$, we define a function $\psi^{\#} :{\mathbb{A}}\rightarrow \R$
by $\psi^{\#}(t, \, \lambda_\xi)=\psi(\xi)$, $t\in [0, \, 1]$,
$\xi \in {\bf V}({\cal A})$.

Let $x_0=(0, \, \lambda_{\hat \xi})\in\mathbb{A}$. By
the H\"{o}lder's inequality and Theorem \ref{cr_har},
\begin{align}
\label{sa} \mathfrak{S}_{{\cal A}_{\hat \xi},u,w}
\underset{p,q}{\asymp} \|I_{u^{\#},w^{\#},x_0}\|
_{L_p(\mathbb{A}_{x_0})\rightarrow L_q(\mathbb{A}_{x_0})}
\stackrel{(\ref{c_uw})}{\underset{p,q}{\asymp}} \sup _{\mathbb{D}
\in {\cal J}_{x_0}} \frac{\|w^{\#}\chi_{\mathbb{A}_{x_0}
\backslash
\mathbb{D}}\|_{L_q(\mathbb{A}_{x_0})}}{\alpha_{\mathbb{D}}},
\end{align}
with $\alpha_{\mathbb{D}}$ defined by (\ref{ad}) and (\ref{a_d}).

Let $\mathbb{D}\in {\cal J}_{x_0}$, $\mathbb{D} \ne
\mathbb{A}_{x_0}$, ${\cal D}^{\#}={\cal
A}^{\#}_{\mathbb{D}}$, ${\cal D}=({\cal D}^{\#})_{\hat\xi} \subset
{\cal A}$,
$$
\Gamma =\{\xi \in {\bf V}({\cal D}):\; \exists t\in (0, \, 1]:\;
(t, \, \lambda_\xi)\in \partial \mathbb{D} \backslash \{x_0\}\}.
$$
Then ${\bf V}_{\max}({\cal D}) \backslash {\bf V} _{\max}({\cal
A})\subset \Gamma \subset {\bf V}_{\max}({\cal D})$ and $\Gamma\ne
\varnothing$. Prove that $({\cal D}, \, \Gamma)\in {\cal
J}'_{\hat\xi}$. Property 1 holds by construction. Prove Property
2. Indeed, let $\xi\in {\bf V}({\cal D})$, and suppose that there
are vertices $\xi'\in {\bf V}_1(\xi) \backslash {\bf V}({\cal D})$
and $\xi''\in {\bf V}_1(\xi) \cap {\bf V}({\cal D})$. Let $\eta\in
{\bf V}({\cal A} ^{\#})$ be the direct predecessor of $\xi$. Then
$(1, \, (\eta, \, \xi))=(0, \, (\xi, \, \xi'))= (0, \, (\xi, \,
\xi''))$ is a boundary point in $\mathbb{D}$ and it is not
maximal.

Set $\mathbb{D}^+=({\cal D}^{\#}, \, \Delta)$, $\mathbb{D}^-
=(({\cal D}^{\#})_\Gamma, \, \Delta)$, $G=\mathbb{D}^+_{\max}
\backslash {\rm int}\, \mathbb{D}$.

We have
$$
\|w^{\#}\chi _{\mathbb{A} _{x_0}\backslash
\mathbb{D}}\|_{L_q(\mathbb{A}_{x_0})} \le \|w^{\#}\chi
_{\mathbb{A} _{x_0}\backslash \mathbb{D}^-}\|_{L_q
(\mathbb{A}_{x_0})}= \|w\chi _{{\cal A}_{\hat \xi}\backslash
{\cal D}_\Gamma}\|_{l_q({\cal A}_{\hat \xi})};
$$
by Remark \ref{rem},
$$
\alpha_{\mathbb{D}}\ge \alpha_{\mathbb{D}^+_{\max}, G}=
\alpha_{\mathbb{D}^+_{\max}, G}^{\rm discr}= \beta_{{\cal
D},\Gamma}.
$$
This yields the desired upper estimate for $\mathfrak{S}_{{\cal A}
_{\hat\xi},u,w}$.

Prove the lower estimate. Let $({\cal D}, \, \Gamma)\in {\cal J}'
_{\hat\xi}({\cal A})$. Take a function
$f\in l_p({\cal A}_{\hat \xi})$ such that $\sum \limits
_{\hat \xi\le \xi'\le \xi}|f(\xi')|u(\xi')=1$
for any $\xi\in \Gamma$, $\|f\|_{l_p({\cal A}_{\hat \xi})}=
\beta_{{\cal D},\Gamma}$, $\tilde f =\frac{f}{\beta_{{\cal D},\Gamma}}$.
Then $\tilde f(\xi')=0$ for any $\xi'>\xi$, $\xi\in \Gamma$,
$\|\tilde f\|_{l_p({\cal A}_{\hat \xi})}=1$,
$$
\left(\sum \limits _{\xi \in {\bf V}({\cal A}_{\hat \xi})}
w^q(\xi)\left(\sum \limits _{\hat \xi\le \xi'\le \xi}
u(\xi')|\tilde f(\xi')|\right)^q\right)^{1/q}\ge
$$
$$
\ge\left(\sum \limits _{\xi \in \Gamma} \sum \limits _{\tilde
\xi\ge \xi} w^q(\tilde \xi) \left(\sum \limits _{\hat \xi\le
\xi'\le \xi} u(\xi')|\tilde f(\xi')|\right)^q\right)^{1/q}
\stackrel{(\ref{va_x_dg})}{=} \beta_{{\cal D},\Gamma}^{-1}
\|w\|_{l_q({\cal A}_{\hat \xi} \backslash {\cal D}_\Gamma)}.
$$
This completes the proof.
\end{proof}
\begin{Sta}
\label{cor2} Let $\xi_*\in {\bf V}({\cal A})$, ${\bf V}^{\cal
A}_1(\xi_*)=\{\xi_1, \, \dots, \, \xi _m\}$, $({\cal D}, \,
\Gamma)\in {\cal J}'_{\xi_*}$, ${\cal D}_{j}={\cal D}_{\xi_j}$,
$\Gamma_j=\Gamma \cap {\bf V}({\cal D}_j)$. Then
\begin{align}
\label{bjoin} \beta _{{\cal D},\Gamma}^{-1}=
\left\|\left(u(\xi_*), \, \left\|(\beta_{{\cal D}_j,\Gamma_j})
_{j=1} ^m \right\| ^{-1} _{l_p^m} \right) \right\|_{l^2_{p'}}.
\end{align}
\end{Sta}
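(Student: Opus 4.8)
The plan is to recognise (\ref{bjoin}) as the discrete analogue of the recursion in Theorem \ref{rekurs} and to prove it directly by decomposing the extremal problem (\ref{betad}) into a scalar contribution at the root $\xi_*$ and $m$ mutually independent subproblems on the subtrees ${\cal A}_{\xi_1},\dots,{\cal A}_{\xi_m}$. First I would reduce the problem: in (\ref{betad}) one may take $\varphi\ge 0$, since replacing $\varphi$ by $|\varphi|$ affects neither the constraints nor $\|\varphi\|_{l_p}$, and one may assume $\varphi$ vanishes off $\bigcup_{\xi\in\Gamma}[\xi_*,\xi]\subset{\bf V}({\cal D})$, as values elsewhere are unconstrained and only enlarge the norm. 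Because $\xi_*$ is not maximal in ${\cal D}$ (the degenerate case ${\cal D}=\{\xi_*\}$ being trivial), Property 2 gives $\{\xi_1,\dots,\xi_m\}={\bf V}_1(\xi_*)\subset{\bf V}({\cal D})$, and $\Gamma\subset{\bf V}_{\max}({\cal D})$ with $\xi_*\notin\Gamma$ yields the disjoint decomposition $\Gamma=\bigsqcup_{j=1}^m\Gamma_j$. Since ${\cal A}_{\xi_*}=\{\xi_*\}\sqcup\bigsqcup_{j=1}^m{\cal A}_{\xi_j}$, the norm splits as $\|\varphi\|_{l_p({\cal A}_{\xi_*})}^p=\varphi(\xi_*)^p+\sum_{j=1}^m\|\varphi\|_{l_p({\cal A}_{\xi_j})}^p$.

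Next I would introduce $a=u(\xi_*)\varphi(\xi_*)\ge 0$. For $\xi\in\Gamma_j$ the path $[\xi_*,\xi]$ runs through $\xi_j$, so the constraint in (\ref{betad}) becomes $a+\sum_{\xi_j\le\xi'\le\xi}u(\xi')\varphi(\xi')=1$; it is feasible precisely when $a\in[0,1]$, and for fixed such $a$ it decouples into the $m$ independent families of requirements $\sum_{\xi_j\le\xi'\le\xi}u(\xi')\varphi(\xi')=1-a$, $\xi\in\Gamma_j$. By the positive homogeneity of (\ref{betad}) (scale $\varphi|_{{\cal A}_{\xi_j}}$ by $(1-a)^{-1}$), the minimal value of $\|\varphi\|_{l_p({\cal A}_{\xi_j})}$ under the $j$th family equals $(1-a)\,\beta_{{\cal D}_j,\Gamma_j}$ (this also covers $a=1$, forcing $\varphi\equiv0$ on ${\cal A}_{\xi_j}$, and $\Gamma_j=\varnothing$, where $\beta_{{\cal D}_j,\Gamma_j}=0$). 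Writing $B=\|(\beta_{{\cal D}_j,\Gamma_j})_{j=1}^m\|_{l_p^m}$, the norm splitting then gives
\[
\beta_{{\cal D},\Gamma}^p=\min_{a\in[0,1]}\Bigl[\bigl(a/u(\xi_*)\bigr)^p+(1-a)^pB^p\Bigr];
\]
here $B>0$ because $\Gamma\ne\varnothing$ forces some $\Gamma_j\ne\varnothing$, whence $\beta_{{\cal D}_j,\Gamma_j}>0$.

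It remains to minimise the convex function on the right. Setting its derivative to zero gives the critical point determined by $\bigl(a/(1-a)\bigr)^{p-1}=\bigl(u(\xi_*)B\bigr)^{p}$, i.e. $a/(1-a)=\bigl(u(\xi_*)B\bigr)^{p'}\in[0,\infty)$, so $a\in[0,1)$ as required. Substituting back and simplifying by means of $1/p+1/p'=1$ (equivalently $p/p'=p-1$) collapses the expression to
\[
\beta_{{\cal D},\Gamma}^p=\bigl(u(\xi_*)^{p'}+B^{-p'}\bigr)^{-p/p'},
\]
that is, $\beta_{{\cal D},\Gamma}^{-1}=\bigl(u(\xi_*)^{p'}+B^{-p'}\bigr)^{1/p'}=\|(u(\xi_*),B^{-1})\|_{l_{p'}^2}$, which is exactly (\ref{bjoin}); the endpoint values $p\in\{1,\infty\}$ follow by the usual conventions. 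I expect the main obstacle to be the second paragraph — rigorously justifying that, once $a$ is fixed, the infimum over $\varphi$ factorises into the product of the $m$ subtree infima and that each of these equals $(1-a)\beta_{{\cal D}_j,\Gamma_j}$, together with the careful treatment of the boundary value $a=1$ and of empty $\Gamma_j$. The final one–variable minimisation is then the routine Hölder/Lagrange computation underlying Theorem \ref{rekurs}, with $u(\xi_*)$ playing the role of $\alpha_{\mathbb{D}_0}^{-1}$.
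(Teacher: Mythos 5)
Your proof is correct, but it takes a genuinely different route from the paper's. The paper disposes of Proposition \ref{cor2} in one sentence: it follows from Theorem \ref{rekurs} (the Evans--Harris--Pick recursion for $\alpha_{\mathbb{D}}$ on a metric tree) combined with Remark \ref{rem}, via the embedding of the combinatorial tree into a metric tree with edgewise-constant weights set up in the proof of Lemma \ref{hardy_cr}; under that dictionary $\alpha_{\mathbb{D}_0}^{-1}=u(\xi_*)$ and the discrete and continuous extremal problems coincide, so (\ref{bjoin}) is a direct translation of the continuous recursion. You instead reprove the recursion from scratch in the discrete setting: fix $a=u(\xi_*)\varphi(\xi_*)$, observe that the constraints in (\ref{betad}) decouple into $m$ independent subtree problems whose values are $(1-a)\beta_{{\cal D}_j,\Gamma_j}$ by homogeneity, and finish with the one-variable convex minimization $\min_{a\in[0,1]}\bigl[(a/u(\xi_*))^p+(1-a)^pB^p\bigr]=\bigl(u(\xi_*)^{p'}+B^{-p'}\bigr)^{-p/p'}$, whose algebra I checked and which is exactly (\ref{bjoin}). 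What each approach buys: the paper's is a one-line deduction but leans on the imported continuous theorem from \cite{ev_har_pick}; yours is elementary and self-contained, never leaves the discrete world, in effect proves the discrete analogue of Theorem \ref{rekurs}, and makes explicit the conventions for empty $\Gamma_j$ (the term $\beta_{{\cal D}_j,\varnothing}=0$ simply drops out of $B$) and the positivity $B>0$, which indeed follows from H\"older on the finite path for any $j$ with $\Gamma_j\ne\varnothing$. The steps you flag as delicate do go through: the infimum of a sum over a product constraint set is the sum of the infima, and the boundary value $a=1$ is handled by continuity of the one-variable functional on $[0,1]$. One quibble: the case ${\cal D}=\{\xi_*\}$ is not ``trivial'' but genuinely outside the scope of (\ref{bjoin}) --- there the left-hand side equals $u(\xi_*)$ while the right-hand side is infinite under the convention $\beta_{{\cal D}_j,\varnothing}=0$ --- so you should say this case is excluded (as it implicitly is, since the statement presupposes $\xi_j\in{\bf V}({\cal D})$ so that ${\cal D}_{\xi_j}$ is defined), rather than that the identity holds there.
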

This proposition follows from Theorem \ref{rekurs} and Remark
\ref{rem}.

\section{An estimate for the norm of a weighted summation operator
on a tree: case $p<q$}

Let $({\cal A}, \, \xi_0)$ be a tree with a finite vertex set, and let
$u, \, w:{\bf V}({\cal
A}) \rightarrow (0, \, \infty)$.

For $\xi_*\in {\bf V}({\cal A})$ and $({\cal D}, \, \Gamma) \in {\cal
J}'_{\xi_*}$ we set $B_{{\cal D},\Gamma}=\beta _{{\cal
D},\Gamma}^{-1}$.

\begin{Lem}
\label{bd_wq_est} Let $1<p<q<\infty$. Then there is $\sigma_*=\sigma_*(p, \, q)
\in \left(0, \, \frac 18\right)$ such that if
\begin{align}
\label{u_w_lq_1} u(\xi) \|w\| _{l_q({\cal A}_\xi)} \le 1, \quad
\xi\in {\bf V}({\cal A}),
\end{align}
and
\begin{align}
\label{w_lq_w_lq} \frac{\|w\|_{l_q({\cal
A}_{\xi'})}}{\|w\|_{l_q({\cal A}_\xi})} \le \sigma, \quad \xi \in
{\bf V}({\cal A}), \quad \xi' \in {\bf V}_1^{\cal A}(\xi)
\end{align}
with $\sigma\in (0, \, \sigma_*)$, then for any $({\cal D}, \,
\Gamma)\in {\cal J}'_{\xi_*}$
\begin{align}
\label{bd_w_lq_pq_1} B_{{\cal D},\Gamma}\|w\|_{l_q({\cal
A}_{\xi_*}\backslash {\cal D}_\Gamma)}
\underset{p,q}{\lesssim} 1.
\end{align}
\end{Lem}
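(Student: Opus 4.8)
The plan is to bound $B_{\mathcal D,\Gamma}=\beta_{\mathcal D,\Gamma}^{-1}$ from above by a duality (flow) argument, and thereby reduce the whole statement to a single summation inequality that can be closed by induction on the tree. Write $W_\xi:=\|w\|_{l_q(\mathcal A_\xi)}$ and, for $\xi'\in\mathbf V(\mathcal D)$, put $m(\xi'):=\sum_{\xi\in\Gamma,\ \xi\ge\xi'}W_\xi^q$. By (\ref{va_x_dg}) the subtrees $\mathcal A_\xi$, $\xi\in\Gamma$, are pairwise disjoint, so $S:=\|w\chi_{\mathcal A_{\xi_*}\setminus\mathcal D_\Gamma}\|_{l_q(\mathcal A_{\xi_*})}^q=\sum_{\xi\in\Gamma}W_\xi^q=m(\xi_*)$, and (\ref{bd_w_lq_pq_1}) reads $B_{\mathcal D,\Gamma}S^{1/q}\underset{p,q}{\lesssim}1$. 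I would test the infimum defining $\beta_{\mathcal D,\Gamma}$ in (\ref{betad}) against the measure $\nu_\xi=W_\xi^q$ on $\Gamma$: for every admissible $\varphi$, interchanging the order of summation and applying H\"older's inequality gives $S=\sum_{\xi\in\Gamma}\nu_\xi=\sum_{\xi'}|\varphi(\xi')|u(\xi')m(\xi')\le\|\varphi\|_{l_p(\mathcal A_{\xi_*})}\bigl(\sum_{\xi'}(u(\xi')m(\xi'))^{p'}\bigr)^{1/p'}$, whence $\beta_{\mathcal D,\Gamma}\ge S\bigl(\sum_{\xi'}(u(\xi')m(\xi'))^{p'}\bigr)^{-1/p'}$. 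Therefore $\bigl(B_{\mathcal D,\Gamma}S^{1/q}\bigr)^{p'}\le S^{-p'/q'}\sum_{\xi'}(u(\xi')m(\xi'))^{p'}$, and everything reduces to the key bound
\begin{align*}
\sum_{\xi'\in\mathbf V(\mathcal D)}\bigl(u(\xi')m(\xi')\bigr)^{p'}\underset{p,q}{\lesssim}S^{p'/q'}.
\end{align*}

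By (\ref{u_w_lq_1}) we have $u(\xi')m(\xi')\le m(\xi')/W_{\xi'}$, so it suffices to show $\sum_{\xi'}(m(\xi')/W_{\xi'})^{p'}\le C_0\,S^{\theta}$ with $\theta:=p'/q'$; note $\theta>1$ precisely because $p<q$. I would prove this by induction over subtrees in the sharper self-similar form $F(\eta):=\sum_{\xi'\ge\eta}(m(\xi')/W_{\xi'})^{p'}\le C_0\,m(\eta)^{\theta}$, the base case $\eta\in\Gamma$ being an equality since $m(\eta)=W_\eta^q$. The inductive step cannot be carried out one vertex at a time: at a vertex with a single mass-carrying child the recursion adds $(m/W)^{p'}>0$ while the target $m(\eta)^{\theta}$ is unchanged. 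Instead I would unfold, starting from $\eta$, the maximal chain $\eta=\zeta_0<\zeta_1<\dots<\zeta_r$ obtained by always passing to the child carrying more than half the current mass (if one exists it is unique), stopping at $\zeta_r$ either when $\zeta_r\in\Gamma$ or when no child of $\zeta_r$ carries more than half of $m(\zeta_r)$ (a \emph{balanced} vertex).

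Along the chain $m(\zeta_{i+1})\ge\tfrac12 m(\zeta_i)$, while (\ref{w_lq_w_lq}) gives $W_{\zeta_i}\ge\sigma^{-(r-i)}W_{\zeta_r}$; hence $(m(\zeta_i)/W_{\zeta_i})^{p'}\le(2\sigma)^{(r-i)p'}(m(\zeta_r)/W_{\zeta_r})^{p'}$, and, provided $2\sigma<1$, the chain terms $\zeta_0,\dots,\zeta_r$ sum to a geometric series bounded by $(1-(2\sigma)^{p'})^{-1}(m(\zeta_r)/W_{\zeta_r})^{p'}\le(1-(2\sigma)^{p'})^{-1}m(\eta)^{\theta}$. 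Every other vertex lies in a subtree hanging off the chain: the side subtrees branching off at $\zeta_0,\dots,\zeta_{r-1}$ together with, in the balanced case, the subtrees rooted at the children of $\zeta_r$. Their roots carry disjoint masses $b_1,b_2,\dots$ with $\sum_l b_l\le m(\eta)$ and, by the stopping rule, each $b_l\le\tfrac12 m(\eta)$. Applying the inductive hypothesis to each and using superadditivity, $\sum_l b_l^{\theta}\le(\max_l b_l)^{\theta-1}\sum_l b_l\le 2^{-(\theta-1)}m(\eta)^{\theta}$ (this is exactly where $\theta>1$ is needed), these contribute at most $C_0\,2^{-(\theta-1)}m(\eta)^{\theta}$. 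Collecting the two parts,
\begin{align*}
F(\eta)\le\Bigl(\tfrac{1}{1-(2\sigma)^{p'}}+C_0\,2^{-(\theta-1)}\Bigr)m(\eta)^{\theta},
\end{align*}
which is $\le C_0\,m(\eta)^{\theta}$ once $C_0\ge\bigl(1-2^{-(\theta-1)}\bigr)^{-1}\bigl(1-(2\sigma)^{p'}\bigr)^{-1}$. This closes the induction and lets one take $\sigma_*(p,q)$ to be any value with $2\sigma_*<1$ (in particular $\sigma_*\le\tfrac18$ is admissible), which yields the key bound and hence (\ref{bd_w_lq_pq_1}).

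The step I expect to be the main obstacle is this inductive step, whose difficulty is the clash between the $l_p$-structure carried by $\beta_{\mathcal D,\Gamma}$ and the $l_q$-structure of $S$ when $p<q$. A one-level induction fails on chain-like configurations, and simply separating ``chain'' from ``branch points'' also fails, because a balanced branch accompanied by small side-leakage of fraction $\delta$ produces a power sum $\ge(1-\delta)^{\theta-1}+\delta^{\theta-1}\ge1$ for $\theta<2$, leaving no room. The resolution—pooling \emph{all} off-chain masses into a single superadditive estimate in which every piece is at most half of $m(\eta)$, while the on-chain terms are absorbed by the geometric decay of $W$ through the factor $(1-(2\sigma)^{p'})^{-1}$—is the heart of the matter, and it is exactly here that both $\theta=p'/q'>1$ (i.e. $p<q$) and the smallness of $\sigma$ enter.
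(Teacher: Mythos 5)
Your proof is correct, but it takes a genuinely different route from the paper's. The paper works with the exact recursion for $B_{{\cal D},\Gamma}$ inherited from the Evans--Harris--Pick theory (Proposition \ref{cor2}): it runs an induction on the depth $\nu_{\cal D}$ in which the admissible bound is not a fixed constant but is modulated by the auxiliary function $f(t)$ of (\ref{def_f_of_t}), with $t$ encoding the ratio $\|w\|_{l_q({\cal A}_{\xi_*}\backslash {\cal D}_\Gamma)}/\|w\|_{l_q({\cal A}_{\hat\xi})}=\sigma^t$; the inductive step splits into the case of a single dominant child, where the increment $f(t_*+1)-f(t_0)$ absorbs the newly created term, and the spread-out case, which is settled through the extremal problem (\ref{aia}) by compactness and Lagrange multipliers. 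You bypass the recursion entirely: testing the constraints in (\ref{betad}) against the weights $W_\xi^q$, $\xi\in\Gamma$, and applying H\"older gives the closed-form bound $B_{{\cal D},\Gamma}\le S^{-1}\bigl(\sum_{\xi'}(u(\xi')m(\xi'))^{p'}\bigr)^{1/p'}$ --- a relaxation of the many constraints to one averaged constraint, lossy in general but, as you show, sufficient here --- after which the lemma reduces to the purely combinatorial power-sum inequality $\sum_{\xi'}(m(\xi')/W_{\xi'})^{p'}\lesssim S^{p'/q'}$, proved by your heavy-path induction: on-chain terms form a geometric series thanks to $2\sigma<1$, and off-chain masses, each at most $\frac12 m(\eta)$, are absorbed by superadditivity of $t\mapsto t^{\theta}$ with $\theta=p'/q'>1$. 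What each approach buys: yours is more elementary and self-contained (it uses only (\ref{betad}) and (\ref{va_x_dg}), never Theorem \ref{rekurs} or Proposition \ref{cor2}), produces explicit constants, and isolates cleanly where $p<q$ and the smallness of $\sigma$ enter; the paper's argument keeps track of the exact quantity $B_{{\cal D},\Gamma}\|w\|_{l_q(\cdot)}$ at every node, which is precisely what the criterion of Lemma \ref{hardy_cr} demands, and so never has to worry about how much a dual relaxation loses.

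One small step you should make explicit: your chain estimate ends with $(m(\zeta_r)/W_{\zeta_r})^{p'}\le m(\eta)^{\theta}$, which is not purely formal --- it requires $m(\zeta_r)\le W_{\zeta_r}^q$. This holds for the same reason as your identity $S=m(\xi_*)$: the vertices of $\Gamma$ are pairwise incomparable (being maximal in ${\cal D}$), so the subtrees ${\cal A}_\xi$ with $\xi\in\Gamma$, $\xi\ge\zeta_r$, are pairwise disjoint and contained in ${\cal A}_{\zeta_r}$, whence $m(\zeta_r)=\sum_{\xi\in\Gamma,\,\xi\ge\zeta_r}W_\xi^q\le W_{\zeta_r}^q$, and therefore $m(\zeta_r)/W_{\zeta_r}\le m(\zeta_r)^{1/q'}$ and $(m(\zeta_r)/W_{\zeta_r})^{p'}\le m(\zeta_r)^{\theta}\le m(\eta)^{\theta}$. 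With this line added (and $\sigma_*$ taken strictly below $\frac18$, as the statement requires), your proof is complete.
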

\begin{proof}
For each $t\in [1, \, \infty]$ we set
\begin{align}
\label{def_f_of_t}
f(t):=\frac{\int \limits_0^t \sigma^{s/3}\, ds}{\int \limits_0^1
\sigma^{s/3}\, ds}.
\end{align}
Suppose that $\xi_*$ is not minimal in ${\cal A}$. Let $\hat \xi$
be the direct predecessor of $\xi_*$. Then $\|w\|_{l_q({\cal
A}_{\xi_*}\backslash {\cal D}_\Gamma)} \stackrel
{(\ref{w_lq_w_lq})} {=} \sigma ^t \|w\| _{l_q({\cal
A}_{\hat\xi})}$, $t\ge 1$. Prove that there is $c=c(p, \, q)\ge 1$
such that
\begin{align}
\label{bd_w_1111} B_{{\cal D},\Gamma}\|w\|_{l_q({\cal A}_{\xi_*}\backslash
{\cal D}_\Gamma)} \le c f^{\frac{1}{p'}}(t).
\end{align}
If $\xi_*$ is minimal, then we prove that there is
$c=c(p, \, q)\ge 1$ such that
\begin{align}
\label{bd_w_2222} B_{{\cal D},\Gamma}\|w\|_{l_q({\cal A}_{\xi_*}\backslash
{\cal D}_\Gamma)} \le c f^{\frac{1}{p'}}(\infty).
\end{align}

Denote by $\nu_{\cal D}$ the maximal length of a path in ${\cal
D}$ with beginning at the point $\xi_*$. We shall prove
(\ref{bd_w_1111}) and (\ref{bd_w_2222}) by induction on $\nu_{\cal D}$.

If $\nu_{\cal D}=0$, then ${\cal D}=\{\xi_*\}$, $\Gamma=
\{\xi_*\}$, $B_{{\cal D},\Gamma}=u(\xi_*)$,
${\cal D}_\Gamma=\varnothing$, ${\cal A}_{\xi_*}
\backslash {\cal D}_\Gamma={\cal A}_{\xi_*}$. Therefore,
(\ref{bd_w_1111}) and (\ref{bd_w_2222}) follow from (\ref{u_w_lq_1})
and from the inequality $f(t)\ge 1$, $t\ge 1$.

Suppose that the assertion is proved for any ${\cal D}$ such that
$\nu_{\cal D}\le \nu$. Prove it for $\nu_{\cal D}=\nu+1$.

Let ${\bf V}_1^{\cal A}(\xi_*)=\{\xi_1, \, \dots, \, \xi_m\}$,
${\cal A}_i={\cal A}_{\xi_i}$, ${\cal D}_i={\cal D}_{\xi_i}$,
$\Gamma_i={\bf V}({\cal D}_i)\cap \Gamma$, ${\cal G}_i={\cal
A}_i\backslash ({\cal D}_i)_{\Gamma _i}$. Then
$$
{\cal A}_{\xi_*}=\{\xi_*\} \cup {\cal A}_1\cup \dots \cup {\cal
A}_m, \quad {\cal D}=\{\xi_*\}\cup {\cal D}_1\cup \dots \cup {\cal
D}_m,
$$
\begin{align}
\label{norm_w_a_g_sum} \|w\|_{l_q({\cal A}_{\xi_*} \backslash
{\cal D} _{\Gamma})} \stackrel{(\ref{va_x_dg})}{=}\left(\sum
\limits _{i=1}^m \|w\|^q_{l_q({\cal G}_i)}\right)^{1/q}.
\end{align}

Set $\alpha_i=\frac{\|w\|_{l_q({\cal G}_i)}}{\|w\|_{l_q({\cal
A}_{\xi_*})}}$. Then
\begin{align}
\label{bdi_wi} \alpha_i=\sigma^{t_i}, \quad t_i\ge 1, \quad
\beta_{{\cal D}_i,\Gamma_i}\ge c^{-1}f^{-\frac{1}{p'}}(t_i)
\|w\|_{l_q({\cal G}_i)}.
\end{align}
Indeed, the first relation follows from (\ref{w_lq_w_lq}). If
$\Gamma_i\ne \varnothing$, then $({\cal D}_i, \, \Gamma_i) \in
{\cal J}'_{\xi_i}$, and the second relation holds by induction
hypothesis. If $\Gamma_i=\varnothing$, then ${\cal D}_i={\cal
A}_i$, $({\cal D}_i)_{\Gamma_i} ={\cal A}_i$ and $\|w\|_{l_q({\cal
G}_i)}=0$.

By (\ref{bjoin}),
$$
B_{{\cal D},\Gamma}^{p'}\|w\|^{p'}_{l_q({\cal A}_{\xi_*}\backslash
{\cal D}_\Gamma)}=u^{p'}(\xi_*)\|w\|^{p'}_{l_q({\cal A}_{\xi_*}\backslash
{\cal D}_\Gamma)}+\left(\sum \limits _{i=1}^m B_{{\cal
D}_i,\Gamma_i}^{-p}\right)^{-\frac{p'}{p}} \|w\|^{p'}_{l_q({\cal
A}_{\xi_*}\backslash {\cal D}_\Gamma)}
\stackrel{(\ref{bdi_wi})}{\le}
$$
$$
\le u^{p'}(\xi_*)\|w\|^{p'}_{l_q({\cal A}_{\xi_*}\backslash {\cal
D}_\Gamma)} + c^{p'} \left(\sum \limits _{i=1}^m
\|w\|^p_{l_q({\cal G}_i)} f^{-\frac{p}{p'}}(t_i)
\right)^{-\frac{p'}{p}} \|w\|^{p'} _{l_q({\cal
A}_{\xi_*}\backslash {\cal
D}_\Gamma)}\stackrel{(\ref{norm_w_a_g_sum})}{=}
$$
$$
= u^{p'}(\xi_*) \|w\|^{p'}_{l_q({\cal A}_{\xi_*})}
\frac{\left(\sum \limits _{i=1}^m \|w\|^q _{l_q({\cal G}_i)}
\right)^{\frac{p'}{q}}}{\|w\|^{p'}_{l_q({\cal A}_{\xi_*})}}+
$$
$$
+c^{p'}\left(\frac{\sum \limits _{i=1}^m \|w\|^q_{l_q({\cal
G}_i)}} {\|w\|^q_{l_q({\cal A}_{\xi_*})}}\right)^{\frac{p'}{q}}
\left( \frac{\sum \limits _{i=1} ^m \|w\|^p_{l_q({\cal G}_i)}
f^{-\frac{p}{p'}}(t_i)}{\|w\|^p_{l_q({\cal
A}_{\xi_*})}}\right)^{-\frac{p'}{p}}
\stackrel{(\ref{u_w_lq_1})}{\le}
$$
$$
\le \left(\sum \limits_{i=1}^m \alpha_i ^q\right)^{\frac{p'}{q}}+
c^{p'}\left(\sum \limits_{i=1}^m \alpha_i ^q\right)^{\frac{p'}{q}}
\left(\sum \limits _{i=1}^m
\alpha_i^pf^{-\frac{p}{p'}}(t_i)\right)^{-\frac{p'}{p}},
$$
i.e.,
\begin{align}
\label{bdg_ai} B_{{\cal D},\Gamma}^{p'}\|w\|^{p'}_{l_q({\cal
A}_{\xi_*}\backslash {\cal D}_\Gamma)}\le \left(\sum
\limits_{i=1}^m \alpha_i ^q\right)^{\frac{p'}{q}}+
c^{p'}\left(\sum \limits_{i=1}^m \alpha_i ^q\right)^{\frac{p'}{q}}
\left(\sum \limits _{i=1}^m
\alpha_i^pf^{-\frac{p}{p'}}(t_i)\right)^{-\frac{p'}{p}}=:S.
\end{align}
Let $t_0=\min _{1\le i\le m} t_i$, $I_1=\{i\in \overline{1, \,
m}:\; t_i=t_0\}$, $I_2=\{1, \, \dots, \, m\}\backslash I_1$. Since
\begin{align}
\label{int} \int \limits _0^s \sigma^{t/3}\, dt=\frac{3}{|\log
\sigma|} (1-\sigma^{s/3}),
\end{align}
it follows that for $1\le i\le m$
$$
\frac{f(t_i)}{f(t_0)} =\frac{1-\sigma
^{t_i/3}}{1-\sigma^{t_0/3}}\le
1+\frac{\sigma^{1/3}}{1-\sigma^{1/3}} \le 1+2\sigma^{1/3}
$$
for $\sigma\le \frac 18$. Hence,
$\left(\frac{f(t_i)}{f(t_0)}\right)^{-\frac{p}{p'}}\ge
(1+2\sigma^{1/3})^{-\frac{p}{p'}}\ge 1-\frac{2p}{p'}\sigma^{1/3}$.
Thus,
\begin{align}
\label{s1s1} S\le \left(\sum \limits_{i=1}^m \alpha_i
^q\right)^{\frac{p'}{q}}+c^{p'}\left(\sum \limits_{i=1}^m \alpha_i
^q\right)^{\frac{p'}{q}}\left(\sum \limits _{i\in I_1} \alpha_i^p
+\sum \limits_{i\in I_2}\alpha_i^p \left(1-\frac{2p}{p'}
\sigma^{\frac 13}\right)\right)^{-\frac{p'}{p}} f(t_0)=:\tilde S.
\end{align}

Estimate $\tilde S$ for small $\sigma$. Since $p<q$, there is
$\varepsilon_0=\varepsilon_0(p, \, q) \in \left(0, \, \frac
13\right)$ such that for any $\varepsilon\in [0, \,
\varepsilon_0]$
\begin{align}
\label{1e} (1-\varepsilon)^{\frac pq}+\frac{\varepsilon ^{\frac
pq}}{2}\ge 1.
\end{align}
Let $\sigma \le \min \left\{\frac 18, \, \left(\frac{p'}{4p}
\right)^3\right\}=:\sigma_1$. Set $\beta=\left(\sum
\limits_{i=1}^m \alpha_i^q\right)^{\frac 1q}$. Then
\begin{align}
\label{beta_le_1} \beta^q =\sum \limits _{i=1}^m \alpha_i^q =\sum
\limits _{i=1}^m \frac{\|w\|^q_{l_q({\cal
G}_i)}}{\|w\|^q_{l_q({\cal A}_{\xi_*})}}
\stackrel{(\ref{norm_w_a_g_sum})}{=} \frac{\|w\|^q _{l_q({\cal
A}_{\xi_*} \backslash {\cal D}_\Gamma)}} {\|w\|^q_{l_q({\cal
A}_{\xi_*})}}\le 1.
\end{align}

First we consider $\{\alpha_i\}_{i=1}^m$ satisfying the following
property: there is $i_*\in \overline{1, \, m}$ such that
$\alpha_{i_*}=\beta(1-\varepsilon)^{1/q}$, $\sum \limits_{i\ne
i_*} \alpha_i^q=\beta^q\varepsilon$, $0\le \varepsilon<\varepsilon
_0$. Recall that $\alpha_i=\sigma^{t_i}$ and for $i\in I_1$, $j\in
I_2$ the inequality $\alpha_i>\alpha_j$ holds. Hence,
$I_1=\{i_*\}$ and
$$
\sum \limits _{i\in I_1} \alpha_i^p +\sum \limits_{i\in I_2}
\alpha_i^p \left(1-\frac{2p}{p'} \sigma^{\frac 13}\right)=
\alpha_{i_*}^p +\sum \limits _{i\ne i_*}\alpha_i^p
\left(1-\frac{2p}{p'} \sigma^{\frac 13}\right)\ge
$$
$$
\ge \alpha_{i_*}^p +\left(\sum \limits_{i\ne i_*} \alpha_i ^q
\right)^{\frac pq} \left(1-\frac{2p}{p'} \sigma^{\frac
13}\right)=
$$
$$
= \beta^p(1-\varepsilon)^{\frac pq}
+\beta^p\varepsilon ^{\frac pq} \left(1-\frac{2p}{p'}
\sigma^{\frac 13}\right)\ge \beta^p \left((1-\varepsilon)^{\frac
pq}+\frac{\varepsilon ^{\frac pq}}{2}\right)
\stackrel{(\ref{1e})}{\ge} \beta^{p}.
$$
Thus,
\begin{align}
\label{til_s_le} \tilde S\le \beta^{p'}+c^{p'}f(t_0).
\end{align}

From (\ref{beta_le_1}) it follows that
\begin{align}
\label{beta_sigma_st}
\beta=\sigma^{t_*}, \quad t_*\ge 0.
\end{align}
Show that there is $\sigma_2=\sigma_2(p)\in (0, \, \sigma_1)$
such that for $0<\sigma\le \sigma_2$, $c\ge 2$
\begin{align}
\label{bp_cft0} \beta^{p'}+c^{p'}f(t_0)\le c^{p'}f(t_*+1),
\end{align}
i.e.,
\begin{align}
\label{bp_cft0_1}
\sigma^{p't_*}\le c^{p'}(f(t_*+1)-f(t_0)).
\end{align}
Indeed,
$\sigma^{t_0}=\alpha_{i_*}=\beta(1-\varepsilon)^{\frac 1q}=
\sigma^{t_*}(1-\varepsilon)^{\frac 1q}$. Let $t_0=t_*+\kappa$.
Then $(1-\varepsilon)^{\frac 1q}=\sigma^{\kappa}$. Since
$(1-\varepsilon)^{\frac 1q}\ge \frac 23$, for small $\sigma$
we get $\kappa \le \frac 14$. Hence, $t_0\le t_*+\frac 14$ and
$t_*\ge t_0-\frac 14\ge \frac 34$. Therefore,
$$
f(t_*+1)-f(t_0)\ge f(t_*+1)-f\left(t_*+\frac 14\right)
\stackrel{(\ref{def_f_of_t}),(\ref{int})}{=} \frac{1-\sigma^{\frac
14}}{1-\sigma^{\frac 13}}\sigma^{\frac{t_*}{3}+\frac{1}{12}}.
$$
If $\sigma$ is sufficiently small, then
$c^{p'}\frac{1-\sigma^{1/4}} {1-\sigma^{1/3}}\ge 1$. Thus, in
order to prove (\ref{bp_cft0_1}) it is sufficient to check that
$p't_*\ge \frac{t_*}{3}+\frac {1}{12}$. Indeed, it follows from
the inequalities $t_*\ge \frac 34$ and $\frac 34>\frac
14+\frac{1}{12}$.

This completes the proof of (\ref{bp_cft0}). If $\xi_*$ is a
minimal vertex, then (\ref{bdg_ai}), (\ref{s1s1}),
(\ref{til_s_le}) and (\ref{bp_cft0}) yield that $B_{{\cal
D},\Gamma}^{p'}\|w\|^{p'}_{l_q({\cal A}_{\xi_*}\backslash {\cal
D}_\Gamma)} \le c^{p'}f(t_*+1) \le c^{p'}f(\infty)$, which implies
(\ref{bd_w_2222}). Suppose that the vertex $\xi_*$ in not minimal.
Let $\hat\xi$ be the direct predecessor of $\xi_*$. Then
$\frac{\|w\|_{l_q({\cal A}_{\xi_*})}} {\|w\|_{l_q({\cal A}_{\hat
\xi})}}\le \sigma$ by (\ref{w_lq_w_lq}). Therefore,
$$
\frac{\|w\|_{l_q({\cal A}_{\xi_*}\backslash {\cal D}_\Gamma)}}
{\|w\|_{l_q({\cal A}_{\hat \xi})}} \stackrel
{(\ref{norm_w_a_g_sum})} {=} \frac{\left(\sum \limits _{i=1}^m
\|w\|^q_{l_q({\cal G}_i)}\right)^{\frac 1q} }{\|w\|_{l_q({\cal
A}_{\xi_*})}}\cdot \frac{\|w\|_{l_q({\cal A}_{\xi _*}
)}}{\|w\|_{l_q({\cal A}_{\hat \xi})}}\le \left(\sum
\limits_{i=1}^m \alpha_i ^q\right)^{\frac{1}{q}}\sigma
\stackrel{(\ref{beta_sigma_st})} {=}\sigma^{t_*+1},
$$
i.e., $\|w\|_{l_q({\cal A}_{\xi_*}\backslash {\cal
D}_\Gamma)}=\|w\|_{l_q({\cal A}_{\hat \xi})} \sigma^t$, $t\ge
t_*+1$. This together with (\ref{bdg_ai}), (\ref{s1s1}),
(\ref{til_s_le}) and (\ref{bp_cft0}) yields that $B_{{\cal
D},\Gamma}^{p'}\|w\|^{p'}_{l_q({\cal A}_{\xi_*}\backslash {\cal
D}_\Gamma)} \le c^{p'}f(t_*+1) \le c^{p'}f(t)$, which implies
(\ref{bd_w_1111}).

Let, now, for any $i\in \overline{1, \, m}$ the inequality
$\alpha_i\le \beta(1-\varepsilon_0)^{\frac 1q}$ holds. Prove that
there is $a=a(p, \, q)<1$ such that
\begin{align}
\label{aia} \left(\sum \limits_{i=1}^m \alpha_i
^q\right)^{\frac{1}{q}}\left(\sum \limits_{i=1}^m \alpha_i
^p\right)^{-\frac{1}{p}}\le a.
\end{align}
Indeed, consider the problem
$$
\sum \limits _{i=1}^m |\alpha_i|^p \rightarrow \min, \quad \sum
\limits _{i=1}^m |\alpha_i|^q=\beta^q, \quad |\alpha_i|^q\le \beta
^q(1-\varepsilon_0), \quad 1\le i\le m.
$$
The compactness argument yields the existence of the point of
minimum, which will be denoted by $(\hat \alpha_1, \, \dots, \,
\hat \alpha_m)$. If $|\hat \alpha_i|^q<\beta^q(1-\varepsilon_0)$
for any $i=\overline{1, \, m}$, then by Lagrange's principle we
get $|\hat \alpha_i|=\beta k^{-\frac 1q}$, $i\in I$, $\hat
\alpha_i=0$, $i\notin I$, for some $I\subset \{1, \, \dots, \,
m\}$, ${\rm card}\, I=k$, $k\ge 2$. Thus,
$$
\left(\sum \limits _{i=1}^m |\hat \alpha_i|^q\right)^{\frac 1q}
\left(\sum \limits _{i=1}^m |\hat \alpha_i|^p\right)^{-\frac
1p}=k^{\frac 1q-\frac 1p}\le 2^{\frac 1q-\frac 1p}.
$$
Let $|\hat \alpha_{i_*}|^q=\beta^q (1-\varepsilon_0)$ for some
$i_*\in \{1, \, \dots, \, m\}$. Then
$$
\sum \limits _{i\ne i_*} |\hat\alpha_i|^p +|\hat\alpha_{i_*}|^p\ge
\left(\sum \limits_{i\ne i_*} |\hat\alpha_i|^q\right)^{\frac
pq}+|\hat\alpha_{i_*}|^p=\beta^p \varepsilon_0^{\frac pq}
+\beta^p(1-\varepsilon_0)^{\frac pq},
$$
$$
\left(\sum \limits _{i=1}^m |\hat \alpha_i|^q\right)^{\frac 1q}
\left(\sum \limits _{i=1}^m |\hat \alpha_i|^p\right)^{-\frac 1p}
\le \left(\varepsilon_0^{\frac pq}+(1-\varepsilon _0) ^{\frac
pq}\right)^{-\frac 1p} \stackrel{(\ref{1e})}{<} 1.
$$

The inequality (\ref{aia}) is proved. There exist
$\sigma_3=\sigma_3(p, \, q)\in (0, \, \sigma_2)$ and $\tilde
a=\tilde a(p, \, q)<1$ such that for any $\sigma\in (0, \, \sigma_3)$
$$
\left(\sum \limits_{i=1}^m \alpha_i
^q\right)^{\frac{p'}{q}}\left(\sum \limits _{i\in I_1} \alpha_i^p
+\sum \limits_{i\in I_2}\alpha_i^p \left(1-\frac{2p}{p'}
\sigma^{\frac 13}\right)\right)^{-\frac{p'}{p}}\le \tilde a^{p'}.
$$
Therefore,
$$
\tilde S\le \beta^{p'} +c^{p'}\tilde a^{p'} f(t_0) \stackrel
{(\ref{int}), (\ref{beta_le_1})}{\le} 1+c^{p'} \tilde
a^{p'}\frac{1}{1-\sigma^{\frac 13}}.
$$
Further, there are $\sigma_4=\sigma_4(p, \, q)\in (0, \,
\sigma_3)$ and $a_*=a_*(p, \, q)<1$ such that for $\sigma\in (0, \,
\sigma_4)$ the inequality $\tilde a^{p'}\frac{1}{1-\sigma^{\frac
13}}\le a_*^{p'}$ holds. Hence, there exists $c_0(p, \, q)\ge 2$
such that $\tilde S\le 1+c^{p'}a_*^{p'}\le
c^{p'}$ for $c\ge c_0(p, \, q)$.
\end{proof}

\begin{Cor}
\label{cor_of_main_lemma} Let $u$, $w:{\bf V}({\cal A})
\rightarrow [0, \, \infty)$. Suppose that $1<p<q<\infty$
and (\ref{w_lq_w_lq}) holds with $0<\sigma< \sigma_*(p, \, q)$.
Then
$$
\mathfrak{S}^{p,q}_{{\cal A},u,w} \underset{p,q}{\lesssim}
\sup _{\xi \in {\bf V}({\cal A})} u(\xi) \|w\|_{l_q({\cal A}_\xi)}.
$$
\end{Cor}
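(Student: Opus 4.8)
The plan is to assemble the Corollary from the two preceding Lemmas after a homogeneity normalization, so that essentially no new work is needed. Lemma~\ref{hardy_cr}, applied with $\hat\xi=\xi_0$ (so that ${\cal A}_{\hat\xi}={\cal A}$), identifies $\mathfrak{S}^{p,q}_{{\cal A},u,w}$ up to a $(p,q)$-constant with $\sup_{({\cal D},\Gamma)\in{\cal J}'_{\xi_0}}\|w\chi_{{\cal A}\backslash{\cal D}_\Gamma}\|_{l_q({\cal A})}/\beta_{{\cal D},\Gamma}$. Since $B_{{\cal D},\Gamma}=\beta_{{\cal D},\Gamma}^{-1}$ and $\|w\chi_{{\cal A}\backslash{\cal D}_\Gamma}\|_{l_q({\cal A})}=\|w\|_{l_q({\cal A}\backslash{\cal D}_\Gamma)}$, this supremand is exactly the left-hand side of (\ref{bd_w_lq_pq_1}). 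Lemma~\ref{bd_wq_est} (applied with $\xi_*=\xi_0$) bounds each such term by a $(p,q)$-constant, \emph{provided} the normalization (\ref{u_w_lq_1}) holds. Thus the only thing to arrange is (\ref{u_w_lq_1}), and this is achieved by rescaling $u$.

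Set $M:=\sup_{\xi\in{\bf V}({\cal A})}u(\xi)\|w\|_{l_q({\cal A}_\xi)}$. If $M=\infty$ the asserted inequality is vacuous, and if $M=0$ one checks directly that $S_{u,w,{\cal A}}\equiv 0$: whenever $w(\xi)\ne 0$ and $\xi'\le\xi$ we have $\|w\|_{l_q({\cal A}_{\xi'})}\ge w(\xi)>0$, so $M=0$ forces $u(\xi')=0$, hence every term $u(\xi')f(\xi')$ entering $S_{u,w,{\cal A}}f(\xi)$ vanishes and $\mathfrak{S}^{p,q}_{{\cal A},u,w}=0$. So we may assume $0<M<\infty$. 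Put $\tilde u:=u/M$. The operator $S_{u,w,{\cal A}}$ is linear and homogeneous of degree one in the weight $u$, so $S_{\tilde u,w,{\cal A}}=M^{-1}S_{u,w,{\cal A}}$ and therefore $\mathfrak{S}^{p,q}_{{\cal A},u,w}=M\cdot\mathfrak{S}^{p,q}_{{\cal A},\tilde u,w}$. By construction $\tilde u(\xi)\|w\|_{l_q({\cal A}_\xi)}=M^{-1}u(\xi)\|w\|_{l_q({\cal A}_\xi)}\le 1$ for every $\xi$, i.e.\ (\ref{u_w_lq_1}) holds for the pair $(\tilde u,w)$; moreover (\ref{w_lq_w_lq}) is a condition on $w$ alone and is unaffected by the rescaling, so it still holds with the same $\sigma<\sigma_*(p,q)$.

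Now I would apply Lemma~\ref{hardy_cr} to $(\tilde u,w)$ with $\hat\xi=\xi_0$, and Lemma~\ref{bd_wq_est} to $(\tilde u,w)$ with $\xi_*=\xi_0$. The quantities $\beta_{{\cal D},\Gamma}$ and $B_{{\cal D},\Gamma}$ in both lemmas are computed with the \emph{same} weight $\tilde u$, so (\ref{bd_w_lq_pq_1}) applies term by term and gives $B_{{\cal D},\Gamma}\|w\|_{l_q({\cal A}\backslash{\cal D}_\Gamma)}\underset{p,q}{\lesssim}1$ for every $({\cal D},\Gamma)\in{\cal J}'_{\xi_0}$. Taking the supremum over $({\cal D},\Gamma)$ and invoking Lemma~\ref{hardy_cr} yields $\mathfrak{S}^{p,q}_{{\cal A},\tilde u,w}\underset{p,q}{\lesssim}1$, and multiplying back by $M$ gives $\mathfrak{S}^{p,q}_{{\cal A},u,w}\underset{p,q}{\lesssim}M=\sup_{\xi}u(\xi)\|w\|_{l_q({\cal A}_\xi)}$, as required.

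The real content has already been spent in Lemma~\ref{bd_wq_est}; the Corollary is essentially its bookkeeping, so I expect no serious obstacle. The two points needing care are (i) the exact matching of the supremand in Lemma~\ref{hardy_cr} with the left-hand side of (\ref{bd_w_lq_pq_1}), so that (\ref{bd_w_lq_pq_1}) may be quoted verbatim, and (ii) the degeneracy allowed by the hypothesis $u,w:{\bf V}({\cal A})\to[0,\infty)$, whereas the two Lemmas are stated for strictly positive weights. The degenerate values $M\in\{0,\infty\}$ are disposed of above; in the remaining range the degeneracies are harmless, since a vertex with $w(\xi)=0$ drops out of every $l_q$-norm (and a whole subtree on which $w$ vanishes may be discarded without affecting either side of the estimate or hypothesis (\ref{w_lq_w_lq})), while a vertex with $u(\xi)=0$ drops out of the summation operator and of the constraint in (\ref{betad}); equivalently one may approximate $u$ from above by strictly positive weights and pass to the limit, using monotonicity of both $\mathfrak{S}^{p,q}_{{\cal A},u,w}$ and $M$ in $u$.
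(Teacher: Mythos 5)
Your proposal is correct and follows essentially the same route as the paper: the paper's own proof is precisely ``reduce to strictly positive weights, then combine Lemma~\ref{hardy_cr} (with $\hat\xi=\xi_0$) and Lemma~\ref{bd_wq_est} (with $\xi_*=\xi_0$)''. Your write-up merely makes explicit what the paper leaves implicit --- the rescaling $\tilde u=u/M$ needed so that hypothesis (\ref{u_w_lq_1}) holds, the disposal of the cases $M\in\{0,\infty\}$, and the reduction from $[0,\infty)$-valued to $(0,\infty)$-valued weights --- and these details are handled correctly.
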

\begin{proof}
It suffices to consider $u$, $w:{\bf V}({\cal A})
\rightarrow (0, \, \infty)$. In this case, the assertion
follows from Lemmas \ref{hardy_cr} and \ref{bd_wq_est}.
\end{proof}

The following lemma gives a lower estimate.
\begin{Lem}
\label{low_est} Let $1<p\le q<\infty$, $\xi_*\in {\bf V}({\cal
A})$. Then
$$
\mathfrak{S}^{p,q}_{{\cal A}_{\xi_*},u,w} \underset{p,q}{\gtrsim}
\sup _{\xi \in {\bf V}({\cal A}_{\xi_*})} \left(\sum \limits
_{\xi_*\le\xi'\le \xi} u^{p'}(\xi')\right)^{\frac{1}{p'}}
\|w\|_{l_q({\cal A}_\xi)}.
$$
\end{Lem}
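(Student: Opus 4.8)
The plan is to prove the lower bound by a single test function supported on one ascending path. Fix an arbitrary $\xi \in {\bf V}({\cal A}_{\xi_*})$ and consider the unique simple path joining $\xi_*$ to $\xi$, that is, the chain of vertices $\{\xi':\xi_* \le \xi' \le \xi\}$. Since in this section ${\bf V}({\cal A})$ is finite, I am free to test the defining inequality for $\mathfrak{S}^{p,q}_{{\cal A}_{\xi_*},u,w}$ against the function $f$ that equals $u^{p'-1}$ on this chain and $0$ elsewhere on ${\cal A}_{\xi_*}$; this is exactly the H\"older extremizer for $\sup_f \bigl(\sum_{\xi_*\le\xi'\le\xi} u f\bigr)/\|f\|_{l_p}$.

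The key observation is that for every vertex $\eta \ge \xi$ the inner summation collapses to a constant. Indeed, $f$ is supported on $[\xi_*,\xi]$, and every $\xi'$ with $\xi_* \le \xi' \le \xi$ also satisfies $\xi' \le \xi \le \eta$, so
$$
\sum_{\xi'\le \eta} u(\xi') f(\xi') = \sum_{\xi_*\le\xi'\le\xi} u(\xi')\,u^{p'-1}(\xi') = \sum_{\xi_*\le\xi'\le\xi} u^{p'}(\xi') =: A,
$$
independently of $\eta$. Retaining on the left-hand side of the operator inequality only the terms with $\eta \ge \xi$, i.e.\ $\eta \in {\bf V}({\cal A}_\xi)$, and factoring out the constant $A$, I obtain the lower bound $\|S_{u,w,{\cal A}_{\xi_*}} f\|_{l_q({\cal A}_{\xi_*})} \ge A\,\|w\|_{l_q({\cal A}_\xi)}$.

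On the right-hand side, since $(p'-1)p = p'$, one has $|f(\xi')|^p = u^{p'}(\xi')$ on the chain, whence $\|f\|_{l_p({\cal A}_{\xi_*})}^p = \sum_{\xi_*\le\xi'\le\xi} u^{p'}(\xi') = A$ and thus $\|f\|_{l_p({\cal A}_{\xi_*})} = A^{1/p}$. Dividing, the ratio $\|S f\|_{l_q}/\|f\|_{l_p}$ is at least $A^{1-1/p}\|w\|_{l_q({\cal A}_\xi)} = A^{1/p'}\|w\|_{l_q({\cal A}_\xi)}$, which is precisely $\bigl(\sum_{\xi_*\le\xi'\le\xi} u^{p'}(\xi')\bigr)^{1/p'}\|w\|_{l_q({\cal A}_\xi)}$. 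Taking the supremum over $\xi \in {\bf V}({\cal A}_{\xi_*})$ yields the claim.

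I do not expect a genuine obstacle: the argument is a one-function test, and the only point requiring (routine) care is the collapse of the inner sum to the constant $A$ for all $\eta \ge \xi$, which rests on $f$ being supported on the chain down to $\xi$. In fact this reasoning delivers the estimate with constant $1$, so the stated $\underset{p,q}{\gtrsim}$ holds with no loss whatsoever; the hypothesis $p>1$ enters only so that $p'<\infty$ and the extremizer $f = u^{p'-1}$ is available. If one prefers to bypass the explicit extremizer, the same conclusion follows by restricting to $\eta \ge \xi$, factoring out $\|w\|_{l_q({\cal A}_\xi)}$, and invoking the $l_p$--$l_{p'}$ duality $\sup_f \bigl(\sum_{\xi_*\le\xi'\le\xi} u f\bigr)\big/\|f\|_{l_p} = \bigl(\sum_{\xi_*\le\xi'\le\xi} u^{p'}\bigr)^{1/p'}$.
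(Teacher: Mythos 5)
Your proof is correct, and it even delivers the bound with constant $1$ rather than just $\underset{p,q}{\gtrsim}$. In substance it is the same extremal computation as the paper's, but packaged as a self-contained test-function argument instead of going through the machinery of Section~2. The paper fixes $\xi$, forms the pair $({\cal D},\Gamma)$ with ${\bf V}({\cal D})=({\bf V}({\cal A}_{\xi_*})\setminus{\bf V}({\cal A}_\xi))\cup\{\xi\}$ and $\Gamma=\{\xi\}$, invokes Lemma~\ref{hardy_cr} to get $\mathfrak{S}^{p,q}_{{\cal A}_{\xi_*},u,w}\underset{p,q}{\gtrsim}\beta_{{\cal D},\Gamma}^{-1}\|w\|_{l_q({\cal A}_\xi)}$, and then evaluates $\beta_{{\cal D},\Gamma}=\bigl(\sum_{\xi_*\le\xi'\le\xi}u^{p'}(\xi')\bigr)^{-1/p'}$ by exactly the $l_p$--$l_{p'}$ duality you use; your $f=u^{p'-1}$ on the chain is precisely the extremizer of that $\beta$-problem, and your restriction of the output norm to ${\bf V}({\cal A}_\xi)$ plays the role of the appeal to Lemma~\ref{hardy_cr}. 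What your packaging buys: the argument needs neither Lemma~\ref{hardy_cr} nor the Evans--Harris--Pick theorem behind it, it yields constant $1$, and it nowhere uses $p\le q$ --- that hypothesis enters the paper's version only because Lemma~\ref{hardy_cr} is stated for $p\le q$. What the paper's packaging buys: the lemma becomes a two-line corollary of machinery already set up for the matching upper bound. Your one delicate step --- the collapse of the inner sum to the constant $A$ for every $\eta\ge\xi$, which rests on transitivity of the tree order and on $f$ vanishing off the chain --- is handled correctly, and the finiteness of ${\bf V}({\cal A})$ assumed in that section makes $A$ finite, so the division is legitimate.
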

\begin{proof}
Let $\xi \in {\bf V}({\cal A}_{\xi_*})$. Define the tree ${\cal
D}$ by ${\bf V}({\cal D}) =(\bf V({\cal A}_{\xi_*})\backslash {\bf
V}({\cal A}_\xi)) \cup \{\xi\}$ and set $\Gamma=\{\xi\}$. Then
$({\cal D}, \, \Gamma)\in {\cal J}'_{\xi_*}$,  ${\bf V}({\cal
D}_\Gamma) ={\bf V}({\cal A}_{\xi_*}) \backslash {\bf V} ({\cal
A}_\xi)$, ${\cal A}_{\xi_*}\backslash {\cal D}_{\Gamma}={\cal
A}_{\xi}$. By Lemma \ref{hardy_cr},
\begin{align}
\label{spq_low_ex1_a} \mathfrak{S}^{p,q}_{{\cal A}_{\xi_*},u,w}
\underset{p,q} {\gtrsim} \beta_{{\cal
D},\Gamma}^{-1}\|w\|_{l_q({\cal A}_\xi)}.
\end{align}
We have
$$
\beta_{{\cal D},\Gamma}=\inf \left\{\|f\|_{l_p({\cal D})} :\; \sum
\limits _{\xi_*\le \xi'\le \xi} u(\xi')|f(\xi')| =1\right\}=
$$
$$
=\inf \left\{\left(\sum \limits _{\xi_*\le \xi'\le \xi}|f(\xi')|
^p\right)^{1/p}:\; \sum \limits _{\xi_*\le \xi'\le \xi}
u(\xi')|f(\xi')|=1\right\}=\left(\sum \limits _{\xi_*\le \xi'\le
\xi}u^{p'}(\xi')\right)^{-1/p'}.
$$
This completes the proof.
\end{proof}

Let $({\cal A}, \, \xi_0)$ be a tree, $u$, $w:{\bf V}({\cal A})
\rightarrow [0, \, \infty)$, $\xi_*\in {\bf V}_{j_0}^{\cal
A}(\xi_0)$, $m\in \Z_+\cup \{+\infty\}$,
$j_0<j_1<j_2<\dots<j_k<\dots$, $J=\{j_k\}_{0\le k<m+1}$. For $0\le
k<m+1$ denote by ${\cal G}_k$ the maximal subgraph of ${\cal A}$
on the set of vertices $\cup _{j_k\le j<j_{k+1}} {\bf
V}_{j-j_0}^{\cal A}(\xi_*)$, and by $\{{\cal A}_{k,i}\}_{i\in
I_k}$, the set of its connected components. Let $\xi_{k,i}$ be the
minimal vertex of the tree ${\cal A}_{k,i}$.

Define the tree ${\cal A}_J$ by
\begin{align}
\label{bf_v_a_j_bf_v1} {\bf V}({\cal A}_J)=\{\xi_{k,i}\}_{0\le
k<m+1, \, i\in I_k}, \quad {\bf V}_1^{{\cal A}_J}(\xi_{k,i})={\bf
V}^{\cal A}_{j_{k+1} -j_k}(\xi_{k,i}), \quad 0\le k< m.
\end{align}
For $0\le k<m+1$, $i\in I_k$ we set
\begin{align}
\label{u_w} u_J(\xi_{k,i})=\|u\|_{l_{p'}({\cal A}_{k,i})}, \quad
w_J(\xi_{k,i}) = \|w\|_{l_q({\cal A}_{k,i})}.
\end{align}
\begin{Lem}
\label{red} The inequality $\mathfrak{S}^{p,q}_{{\cal
A}_{\xi_*},u,w}\le \mathfrak{S}^{p,q}_{{\cal A}_J,u_J,w_J}$ holds.
\end{Lem}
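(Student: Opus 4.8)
The plan is to prove the inequality directly from the definition of the operator norm by \emph{lifting} each admissible function on ${\cal A}_{\xi_*}$ to a function on the coarsened tree ${\cal A}_J$ whose $l_p$-norm does not increase and whose image under $S_{u_J,w_J,{\cal A}_J}$ dominates (after grouping the $l_q$-sum over components) the image of the original function. Since $\mathfrak{S}^{p,q}_{{\cal A}_{\xi_*},u,w}$ is a supremum over all $f$, it suffices to produce, for each $f:{\bf V}({\cal A}_{\xi_*})\rightarrow \R$, a function $\varphi:{\bf V}({\cal A}_J)\rightarrow \R$ with $\|\varphi\|_{l_p({\cal A}_J)}\le \|f\|_{l_p({\cal A}_{\xi_*})}$ and $\|S_{u,w,{\cal A}_{\xi_*}}f\|_{l_q({\cal A}_{\xi_*})}\le \|S_{u_J,w_J,{\cal A}_J}\varphi\|_{l_q({\cal A}_J)}$. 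Replacing $f$ by $|f|$ only enlarges the left-hand side while preserving $\|f\|_{l_p}$, so I may assume $f\ge 0$ and set $\varphi(\xi_{k,i})=\|f\|_{l_p({\cal A}_{k,i})}$.

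First I would record two bookkeeping facts. The bands ${\cal G}_k$ cover all depths $j\ge j_0$, and their connected components $\{{\cal A}_{k,i}\}$ therefore partition ${\bf V}({\cal A}_{\xi_*})$; consequently $\|\varphi\|_{l_p({\cal A}_J)}^p=\sum_{k,i}\sum_{\xi'\in {\bf V}({\cal A}_{k,i})}f^p(\xi')=\|f\|_{l_p({\cal A}_{\xi_*})}^p$, so the $l_p$-norm is in fact preserved exactly. Next, by H\"older's inequality applied on each component (with $\frac 1p+\frac{1}{p'}=1$) and the definition (\ref{u_w}) of $u_J$,
\begin{align*}
u_J(\xi_{k,i})\varphi(\xi_{k,i})=\|u\|_{l_{p'}({\cal A}_{k,i})}\Bigl(\sum_{\xi'\in {\bf V}({\cal A}_{k,i})}f^p(\xi')\Bigr)^{1/p}\ge \sum_{\xi'\in {\bf V}({\cal A}_{k,i})}u(\xi')f(\xi').
\end{align*}

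The heart of the argument is the comparison of the inner sums. Fix $\xi\in {\bf V}({\cal A}_{k,i})$. The chain of vertices $\xi'$ with $\xi_*\le \xi'\le \xi$ meets each band $l\in\overline{0,\,k}$ in a connected sub-path lying in a single component ${\cal A}_{l,i_l}$, which it enters through the minimal vertex $\xi_{l,i_l}$ (the unique vertex of that component at depth $j_l$). By (\ref{bf_v_a_j_bf_v1}) the vertex $\xi_{l+1,i_{l+1}}$, lying at ${\cal A}$-distance $j_{l+1}-j_l$ above $\xi_{l,i_l}$, is an ${\cal A}_J$-child of $\xi_{l,i_l}$; hence $\xi_{0,i_0}=\xi_*,\,\dots,\,\xi_{k,i}$ is exactly the ${\cal A}_J$-path from $\xi_*$ to $\xi_{k,i}$, i.e.\ the set of ${\cal A}_J$-ancestors of $\xi_{k,i}$. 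Since $u,f\ge 0$, discarding the off-path vertices of each component and then applying the H\"older bound gives
\begin{align*}
\sum_{\xi_*\le \xi'\le \xi}u(\xi')f(\xi')=\sum_{l=0}^k\sum_{\substack{\xi_*\le\xi'\le\xi\\ \xi'\in {\bf V}({\cal A}_{l,i_l})}}u(\xi')f(\xi')\le \sum_{l=0}^k u_J(\xi_{l,i_l})\varphi(\xi_{l,i_l})=\sum_{\eta\le \xi_{k,i}}u_J(\eta)\varphi(\eta).
\end{align*}

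Finally I would assemble the $l_q$-norm by grouping the outer sum over components. Using the last display for every $\xi\in {\bf V}({\cal A}_{k,i})$ and then the definition (\ref{u_w}) of $w_J$,
\begin{align*}
\|S_{u,w,{\cal A}_{\xi_*}}f\|^q_{l_q({\cal A}_{\xi_*})}=\sum_{k,i}\sum_{\xi\in {\bf V}({\cal A}_{k,i})}w^q(\xi)\Bigl(\sum_{\xi_*\le\xi'\le\xi}u(\xi')f(\xi')\Bigr)^q\le \sum_{k,i}\Bigl(\sum_{\eta\le\xi_{k,i}}u_J(\eta)\varphi(\eta)\Bigr)^q\sum_{\xi\in {\bf V}({\cal A}_{k,i})}w^q(\xi),
\end{align*}
and the inner $w$-sum equals $w_J^q(\xi_{k,i})$, so the right-hand side is $\|S_{u_J,w_J,{\cal A}_J}\varphi\|^q_{l_q({\cal A}_J)}$. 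Combining the three displays yields $\|S_{u,w}f\|_{l_q}\le \|S_{u_J,w_J}\varphi\|_{l_q}\le \mathfrak{S}^{p,q}_{{\cal A}_J,u_J,w_J}\|\varphi\|_{l_p}=\mathfrak{S}^{p,q}_{{\cal A}_J,u_J,w_J}\|f\|_{l_p}$, and taking the supremum over $f$ gives the claim with constant $1$. I expect the only delicate point to be the geometric identification in the third paragraph — that the component-roots met along the chain from $\xi_*$ to $\xi$ are precisely the ${\cal A}_J$-ancestors of $\xi_{k,i}$; everything else is H\"older plus nonnegativity, and no relation between $p$ and $q$ is needed.
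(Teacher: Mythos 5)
Your proof is correct and takes essentially the same route as the paper's own proof: your lifted function $\varphi$ is exactly the paper's $f_J(\xi_{k,i})=\|f\|_{l_p({\cal A}_{k,i})}$, and the three steps (H\"older's inequality on each component ${\cal A}_{l,i_l}$, identification of the component roots met along the chain from $\xi_*$ to $\xi$ with the ${\cal A}_J$-ancestors of $\xi_{k,i}$, and regrouping of the outer $l_q$-sum by components to produce $w_J$) match the paper's argument step for step. The only difference is that you spell out the geometric identification in more detail than the paper does, which is a harmless elaboration.
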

\begin{proof}
Let $f:{\bf V}({\cal A}_{\xi_*}) \rightarrow \R_+$,
$\|f\|_{l_p({\cal A}_{\xi_*})}=1$. Denote
$f_J(\xi_{k,i}) =\|f\| _{l_p({\cal A}_{k,i})}$, $0\le k\le m$,
$i\in I_k$. Then $\|f_J\| _{l_p({\cal A}_J)}=1$.

Let $\xi\in {\bf V}({\cal A}_{k,i})$. Then for any $0\le
l\le k$ there exists $i_l\in I_l$ such that $\xi_{l,i_l}\le\xi$.
This together the H\"{o}lder's inequality yields
$$
\sum \limits _{\xi_*\le \xi'\le \xi} u(\xi')f(\xi') \le \sum
\limits _{l=0}^k \sum \limits _{\xi'\in {\bf V}({\cal A}_{l,i_l})}
u(\xi')f(\xi') \stackrel{(\ref{u_w})}{\le} \sum \limits _{l=0}^k
u_J(\xi_{l,i_l})f_J(\xi_{l,i_l}) =
$$
$$
=\sum \limits _{\zeta'\in {\bf V}({\cal A}_J),
\, \zeta'\le \xi_{k,i}} u_J(\zeta')f_J(\zeta').
$$
Hence,
$$
\sum \limits _{\xi\in {\bf V}({\cal A}_{\xi_*})} w^q(\xi) \left(
\sum \limits _{\xi_*\le \xi' \le \xi} u(\xi') f(\xi')\right)^q =
\sum \limits _{k=0}^m \sum \limits _{i\in I_k} \sum \limits _{\xi
\in {\bf V}({\cal A}_{k,i})}w^q(\xi) \left( \sum \limits
_{\xi_*\le \xi' \le \xi} u(\xi') f(\xi')\right)^q\le
$$
$$
\le \sum \limits _{k=0}^m \sum \limits _{i\in I_k} \sum \limits
_{\xi \in {\bf V}({\cal A}_{k,i})}w^q(\xi)\left(\sum \limits
_{\zeta'\in {\bf V}({\cal A}_J), \, \zeta'\le \xi_{k,i}}
u_J(\zeta')f_J(\zeta')\right)^q \stackrel{(\ref{u_w})}{=}
$$
$$
=\sum \limits _{k=0}^m \sum
\limits _{i\in I_k} w^q_J(\xi_{k,i})\left(\sum \limits _{\zeta'\in
{\bf V}({\cal A}_J), \, \zeta'\le \xi_{k,i}}
u_J(\zeta')f_J(\zeta')\right)^q = $$$$=\sum \limits _{\zeta \in
{\bf V}({\cal A}_J)} w_J^q(\zeta)\left(\sum \limits _{\zeta'\in
{\bf V}({\cal A}_J), \, \zeta'\le \zeta}
u_J(\zeta')f_J(\zeta')\right)^q\le \left[
\mathfrak{S}^{p,q}_{{\cal A}_J,u_J,w_J}\right]^q.
$$
This completes the proof.
\end{proof}

\renewcommand{\proofname}{\bf Proof of Theorem \ref{main_plq}}
\begin{proof}
Denote by $\xi_0$ the minimal vertex of ${\cal A}$ and
set $\hat{\mathfrak{Z}}=(K, \, \lambda, \, l_0, \, p, \, q)$.

Let $\sigma_*=\sigma_*(p, \, q)\in (0, \, 1)$ be such as in Lemma
\ref{bd_wq_est}, and let $t_*=t_*(\hat{\mathfrak{Z}})\in \N$
be such that $\lambda^{t_*}\le\frac{\sigma_*}{2}$. Set
$l_*=l_0t_*$. For $m\in \N$ we define the function
$u_m:{\bf V}({\cal A}) \rightarrow \R_+$ by
$$
u_m(\xi) =\left\{ \begin{array}{c} u(\xi), \quad \xi \in {\bf
V}_j^{\cal A}(\xi_0), \quad j\le l_*m, \\ 0, \quad \xi \in {\bf
V}_j^{\cal A}(\xi_0), \quad j> l_*m. \end{array}\right.
$$
Prove that
\begin{align}
\label{spqm} \mathfrak{S}^{p,q}_{{\cal A},u_m,w}
\underset{\hat{\mathfrak{Z}}}{\lesssim} \sup _{\xi\in {\bf
V}({\cal A})} u(\xi) \|w\|_{l_q({\cal A}_\xi)}.
\end{align}
This together with B. Levi's theorem gives the desired estimate.

For $k\in \Z_+$ we set $j_k=l_*k$. Denote $J=\{j_k\} _{0\le k \le
m}$ and and define the tree ${\cal A}_J$ by
(\ref{bf_v_a_j_bf_v1}). Then Lemma \ref{red} yields
\begin{align}
\label{spqm1} \mathfrak{S}^{p,q}_{{\cal A},u_m,w}\le
\mathfrak{S}^{p,q}_{{\cal A}_J,(u_m)_J,w_J};
\end{align}
here $(u_m)_J$, $w_J$ are defined by (\ref{u_w}).

Let $0\le k\le m-1$, $\xi_{k,i}\in {\bf V}({\cal A}_J)$,
$\xi_{k+1,i'}\in {\bf V}_1^{{\cal A}_J}(\xi_{k,i})$,
$\xi_{k,i}=\eta_0<\eta_1<\dots<\eta_{t_*}=\xi_{k+1,i'}$, $\eta_j
\in {\bf V}^{\cal A}_{l_0}(\eta_{j-1})$, $1\le j\le t_*$.
Then
$$
\frac{\|w_J\|_{l_q(({\cal A}_J)_{\xi_{k+1,i'}})}}
{\|w_J\|_{l_q(({\cal A}_J)_{\xi_{k,i}})}}=\frac{\|w\|_{l_q({\cal
A}_{\xi_{k+1,i'}})}}{\|w\|_{l_q({\cal A}_{\xi_{k,i}})}} =\prod
_{j=1}^{t_*} \frac{\|w\|_{l_q({\cal
A}_{\eta_j})}}{\|w\|_{l_q({\cal A} _{\eta_{j-1}})}}
\stackrel{(\ref{wq_lam})}{\le} \lambda^{t_*} \le
\frac{\sigma_*}{2}.
$$
By Corollary \ref{cor_of_main_lemma},
\begin{align}
\label{sujwj} \mathfrak{S}^{p,q}_{{\cal A}_J,(u_m)_J,w_J}
\underset{p,q}{\lesssim} \sup _{\zeta\in {\bf V}({\cal A}_J)}
(u_m)_J(\zeta) \|w\|_{l_q(({\cal A}_J)_\zeta)} =\sup _{0\le k\le
m, \; i\in I_k} \|u_m\|_{l_{p'}({\cal A}_{k,i})} \|w\| _{l_q({\cal
A}_{\xi_{k,i}})}.
\end{align}
If $k<m$, then by (\ref{cv1k}) we have ${\rm card}\, {\bf V}({\cal
A}_{k,i}) \underset{\hat{\mathfrak{Z}}}{\lesssim} 1$; this
together the first relation in (\ref{wq_lam}) yield that
$\|u_m\|_{l_{p'}({\cal A}_{k,i})} \underset{\hat{\mathfrak{Z}}}
{\lesssim} u(\xi_{k,i})$. If $k=m$, then $\|u_m\|_{l_{p'}({\cal
A}_{k,i})}= u(\xi_{k,i})$. This together with (\ref{spqm1}) and
(\ref{sujwj}) implies (\ref{spqm}).

The lower estimate follows from Lemma \ref{low_est}.
\end{proof}
\renewcommand{\proofname}{\bf Proof}

Consider two examples.

{\bf Example 1.} Suppose that there is $C_*\ge 1$ such that for any
$j\in \Z_+$, $j'\ge j$, $\xi \in {\bf V}_j^{\cal A}(\xi)$
\begin{align}
\label{card_vjj0}  C_*^{-1}\cdot 2^{\psi(j')-\psi(j)} \le {\rm
card} \, {\bf V}^{\cal A}_{j'-j}(\xi) \le C_*\cdot
2^{\psi(j')-\psi(j)}, \quad 2^{\psi(t)} =2^{\theta
st}\Lambda_*(2^{st});
\end{align}
here $\theta>0$, $s\in \N$, $\Lambda_*: (0, \, \infty) \rightarrow
(0, \, \infty)$ is an absolutely continuous function such that $\lim
_{y\to \infty} \frac{y\Lambda_*'(y)}{\Lambda_*(y)}=0$. Suppose that for
$\xi \in {\bf V}_j^{\cal A}(\xi_0)$
\begin{align}
\label{ujwj} u(\xi)=u_j=2^{\frac{\theta sj}{q}} \Psi_u(2^{sj}),
\quad w(\xi) = w_j= 2^{-\frac{\theta sj}{q}} \Psi_w(2^{sj}).
\end{align}
Here $\Psi_u$, $\Psi_w: (0, \, \infty) \rightarrow (0, \, \infty)$
are absolutely continuous functions such that $\lim _{y\to \infty}
\frac{y\Psi_u'(y)}{\Psi_u(y)}=\lim _{y\to \infty}
\frac{y\Psi_w'(y)}{\Psi_w(y)}=0$.

Set ${\mathfrak{Z}} = (u, \, w, \, \psi, \, C_*, \, p,
\, q)$.

For $j_0\in \Z_+$ we write
$$
M_{j_0} =\sup _{j\in \Z_+, \, j\ge j_0} \Psi_u(2^{sj}) \left(\sum
\limits _{i\ge j} \Psi_w^q(2^{si}) \frac{\Lambda_*(2^{si})}
{\Lambda_*(2^{sj})}\right)^{\frac 1q}.
$$

The proof of the following lemma is straightforward
and will be omitted.
\begin{Lem}
\label{sum_lem} Let $\Lambda_*:(0, \, +\infty) \rightarrow
(0, \, +\infty)$ be an absolutely continuous function such that
$\lim \limits_{y\to +\infty}\frac{y\Lambda _*'(y)}
{\Lambda _*(y)}=0$. Then for any
$\varepsilon >0$
\begin{align}
\label{rho_yy} t^{-\varepsilon}
\underset{\varepsilon,\Lambda_*}{\lesssim}
\frac{\Lambda_*(ty)}{\Lambda_*(y)}\underset{\varepsilon,
\Lambda_*}{\lesssim}
t^\varepsilon,\quad 1\le y<\infty, \;\; 1\le t<\infty.
\end{align}
\end{Lem}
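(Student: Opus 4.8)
The plan is to pass to logarithms and use the hypothesis $\lim_{y\to+\infty} y\Lambda_*'(y)/\Lambda_*(y)=0$ directly; this is essentially a Karamata-type estimate for a slowly varying function, which is why the authors call it straightforward. Since $\Lambda_*$ is absolutely continuous and strictly positive, $\log\Lambda_*$ is absolutely continuous on every compact subinterval of $(0,+\infty)$, so for $1\le y<\infty$ and $1\le t<\infty$ I would write
$$
\log\frac{\Lambda_*(ty)}{\Lambda_*(y)}=\int_y^{ty}\frac{\Lambda_*'(s)}{\Lambda_*(s)}\,ds=\int_y^{ty}\frac{s\Lambda_*'(s)}{\Lambda_*(s)}\cdot\frac{ds}{s}.
$$
Writing $g(s)=s\Lambda_*'(s)/\Lambda_*(s)$, the hypothesis gives $g(s)\to 0$ as $s\to+\infty$, so for each $\varepsilon>0$ there is a threshold $Y_0=Y_0(\varepsilon,\Lambda_*)\ge 1$ with $|g(s)|\le\varepsilon$ for all $s\ge Y_0$.

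First I would dispose of the range $y\ge Y_0$. There the whole interval of integration lies in $[Y_0,+\infty)$, whence $\bigl|\log(\Lambda_*(ty)/\Lambda_*(y))\bigr|\le\varepsilon\int_y^{ty}ds/s=\varepsilon\log t$, which yields the two-sided bound $t^{-\varepsilon}\le\Lambda_*(ty)/\Lambda_*(y)\le t^{\varepsilon}$ with implicit constant $1$. This is the clean part of the argument.

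The main obstacle is the transition region $1\le y\le Y_0$, where the smallness of $g$ is unavailable and the bound must instead be absorbed into the implicit constant (which is permitted to depend on $\varepsilon$ and $\Lambda_*$). Here I would use that $\Lambda_*$, being continuous and positive, attains a positive minimum and a finite maximum on the compact interval $[1,Y_0]$, so $\Lambda_*(z)\underset{\varepsilon,\Lambda_*}{\asymp}1$ for $z\in[1,Y_0]$. If also $ty\le Y_0$, both arguments lie in $[1,Y_0]$ and the ratio is pinched between two positive constants, which dominate $t^{\varepsilon}$ from above and $t^{-\varepsilon}$ from below since $t\ge 1$. If instead $ty>Y_0$, I would factor
$$
\frac{\Lambda_*(ty)}{\Lambda_*(y)}=\frac{\Lambda_*\bigl(Y_0\cdot(ty/Y_0)\bigr)}{\Lambda_*(Y_0)}\cdot\frac{\Lambda_*(Y_0)}{\Lambda_*(y)},
$$
where the second factor is controlled by the compactness estimate, and the first factor is controlled by the already-treated case (base point $Y_0\ge Y_0$, dilation $t_2:=ty/Y_0\ge 1$). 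Since $y\le Y_0$ forces $t_2\le t$, and $t\ge 1$, one has $t_2^{\varepsilon}\le t^{\varepsilon}$ and $t_2^{-\varepsilon}\ge t^{-\varepsilon}$, so both factors combine into the desired two-sided bound with a constant depending only on $\varepsilon$ and $\Lambda_*$. Assembling the three cases gives (\ref{rho_yy}).
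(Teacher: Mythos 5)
Your proof is correct and complete: the reduction to $\log\bigl(\Lambda_*(ty)/\Lambda_*(y)\bigr)=\int_y^{ty}\frac{s\Lambda_*'(s)}{\Lambda_*(s)}\,\frac{ds}{s}$ is legitimate (since $\Lambda_*$ is positive and absolutely continuous, $\log\Lambda_*$ is absolutely continuous on compact subintervals of $(0,\infty)$), and the three-case splitting at the threshold $Y_0$ correctly absorbs the transition region into the constant. Note that the paper gives no proof at all — it declares the lemma straightforward and omits it — and your argument is precisely the standard Karamata-type representation argument the author evidently had in mind, so there is nothing to compare beyond saying you have supplied the missing details correctly.
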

\begin{Trm}
\label{ex1_plq} Let $M_{j_0}<\infty$, $\xi_*\in {\bf V}_{j_0}
^{\cal A} (\xi_0)$. Then $\mathfrak{S}^{p,q}_{{\cal
A}_{\xi_*},u,w} \underset{{\mathfrak{Z}}}{\asymp} M_{j_0}$.
\end{Trm}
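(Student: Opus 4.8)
The plan is to deduce the statement from Theorem \ref{main_plq} applied to the rooted subtree $({\cal A}_{\xi_*}, \, \xi_*)$. Since $1<p<q<\infty$, it suffices to verify that $({\cal A}_{\xi_*}, \, u, \, w)$ satisfies (\ref{cv1k}) and (\ref{wq_lam}) with suitable $K$, $l_0$, $\lambda$ depending only on ${\mathfrak{Z}}$; Theorem \ref{main_plq} then gives $\mathfrak{S}^{p,q}_{{\cal A}_{\xi_*},u,w} \underset{{\mathfrak{Z}}}{\asymp} \sup_{\xi\in {\bf V}({\cal A}_{\xi_*})} u(\xi)\|w\|_{l_q({\cal A}_\xi)}$, and it remains to identify this supremum with $M_{j_0}$. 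Note that for $\xi\in {\bf V}({\cal A}_{\xi_*})$ the forward neighborhoods and the subtrees ${\cal A}_\xi$ are the same whether computed in ${\cal A}$ or in ${\cal A}_{\xi_*}$, so all quantities below are evaluated inside ${\cal A}$.

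First I would check the hypotheses. For $\xi\in {\bf V}_j^{\cal A}(\xi_0)$ the upper bound in (\ref{card_vjj0}) gives ${\rm card}\, {\bf V}_1^{\cal A}(\xi)\le C_*2^{\psi(j+1)-\psi(j)}=C_*2^{\theta s}\Lambda_*(2^{s(j+1)})/\Lambda_*(2^{sj})$, which by Lemma \ref{sum_lem} is bounded by some $K=K({\mathfrak{Z}})$, yielding (\ref{cv1k}); similarly $u_{j+1}/u_j=2^{\theta s/q}\Psi_u(2^{s(j+1)})/\Psi_u(2^{sj})$ is bounded via (\ref{rho_yy}) applied to $\Psi_u$, giving the first part of (\ref{wq_lam}). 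The main step is the geometric decay of the $w$-norm. Summing over levels and using (\ref{card_vjj0}), (\ref{ujwj}) I would obtain, uniformly in $\xi\in {\bf V}_j^{\cal A}(\xi_0)$,
$$\|w\|_{l_q({\cal A}_\xi)}^q=\sum_{i\ge j}{\rm card}\, {\bf V}_{i-j}^{\cal A}(\xi)\,w_i^q \underset{C_*}{\asymp} \frac{1}{2^{\theta sj}\Lambda_*(2^{sj})}\sum_{i\ge j}\Lambda_*(2^{si})\Psi_w^q(2^{si}),$$
the tail sum being finite for $j\ge j_0$ since $M_{j_0}<\infty$. For $\xi''\in {\bf V}_{l_0}^{\cal A}(\xi)$ the ratio of the $q$-th powers is then $\underset{C_*}{\asymp}$ the tail-sum ratio (which is $\le 1$ by monotonicity) times $2^{-\theta sl_0}\Lambda_*(2^{sj})/\Lambda_*(2^{s(j+l_0)})$; by (\ref{rho_yy}) the last factor is $\lesssim 2^{s l_0\varepsilon}$, so with $\varepsilon=\theta/2$ the ratio of $q$-th powers is at most $C({\mathfrak{Z}})\cdot 2^{-\theta sl_0/2}$. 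Hence for $l_0=l_0({\mathfrak{Z}})$ large enough the second part of (\ref{wq_lam}) holds with some $\lambda=\lambda({\mathfrak{Z}})<1$.

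It then remains to compute the supremum. For $\xi\in {\bf V}_j^{\cal A}(\xi_0)$ with $j\ge j_0$, substituting the displayed formula for $\|w\|_{l_q({\cal A}_\xi)}$ and using $u_j=2^{\theta sj/q}\Psi_u(2^{sj})$, the factor $2^{\theta sj/q}$ cancels and I get $u(\xi)\|w\|_{l_q({\cal A}_\xi)}\underset{C_*}{\asymp}\Psi_u(2^{sj})\bigl(\sum_{i\ge j}\Psi_w^q(2^{si})\Lambda_*(2^{si})/\Lambda_*(2^{sj})\bigr)^{1/q}$, which is exactly the $j$-th term in the definition of $M_{j_0}$. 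Since by (\ref{card_vjj0}) every level $j\ge j_0$ carries at least one vertex of ${\cal A}_{\xi_*}$ and the estimate is uniform in $\xi$, taking the supremum gives $\sup_{\xi\in {\bf V}({\cal A}_{\xi_*})}u(\xi)\|w\|_{l_q({\cal A}_\xi)}\underset{{\mathfrak{Z}}}{\asymp}M_{j_0}$, and combining with Theorem \ref{main_plq} yields the claim. The only genuinely delicate point is the verification of the geometric decay of $\|w\|_{l_q({\cal A}_\xi)}$: it rests on the positivity of $\theta$ dominating the slowly varying correction $\Lambda_*$ through Lemma \ref{sum_lem}, combined with the monotonicity of the tail sums, and this is precisely what forces $l_0$ to be chosen large.
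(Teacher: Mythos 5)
Your proposal is correct and follows essentially the same route as the paper: establish $\|w\|_{l_q({\cal A}_\xi)}\underset{{\mathfrak{Z}}}{\asymp}2^{-\theta sj/q}\bigl(\sum_{j'\ge j}\Psi_w^q(2^{sj'})\Lambda_*(2^{sj'})/\Lambda_*(2^{sj})\bigr)^{1/q}$ from (\ref{card_vjj0}) and (\ref{ujwj}), deduce the geometric decay in (\ref{wq_lam}) for large $l_0$ by combining the monotonicity of the tail sums with (\ref{rho_yy}) applied with $\varepsilon$ proportional to $\theta$, identify $u(\xi)\|w\|_{l_q({\cal A}_\xi)}$ with the $j$-th term of $M_{j_0}$, and invoke Theorem \ref{main_plq}. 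Your explicit verification of (\ref{cv1k}) and of the first condition in (\ref{wq_lam}) (which the paper leaves implicit), and your remark that every level $j\ge j_0$ is nonempty, are minor elaborations of the same argument, not a different method.
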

\begin{proof}
Let $\xi \in {\bf V}_{j-j_0}^{\cal A}(\xi_*)$. Then
$$
\|w\| _{l_q({\cal A}_{\xi})}\stackrel{(\ref{card_vjj0})}
{\underset{{\mathfrak{Z}}}{\asymp}}\left(\sum \limits _{j'\ge j}
w^q(j')2^{\psi(j')-\psi(j)}\right)^{\frac 1q}
\stackrel{(\ref{card_vjj0}),(\ref{ujwj})}{=}
$$
$$
=\left(\sum \limits _{j'\ge j} 2^{-\theta sj'}
\Psi_w^q(2^{sj'}) \cdot 2^{\theta
s(j'-j)}\frac{\Lambda_*(2^{sj'})}
{\Lambda_*(2^{sj})}\right)^{\frac 1q}
=2^{-\frac{\theta sj}{q}} \left(\sum \limits _{j'\ge j}
\Psi_w^q(2^{sj'})\frac{\Lambda_*(2^{sj'})}
{\Lambda_*(2^{sj})}\right)^{\frac 1q},
$$
i.e.,
\begin{align}
\label{ststst}
\|w\| _{l_q({\cal A}_\xi)}\underset{{\mathfrak{Z}}}{\asymp}
2^{-\frac{\theta sj}{q}} \left(\sum \limits _{j'\ge j}
\Psi_w^q(2^{sj'})\frac{\Lambda_*(2^{sj'})}
{\Lambda_*(2^{sj})}\right)^{\frac 1q}.
\end{align}
For any $l_0\in \N$
$$
\sum \limits _{j'\ge j+l_0} \frac{\Lambda_*(2^{sj'})}
{\Lambda_*(2^{s(j+l_0)})}\Psi_w^q(2^{sj'}) \le
\frac{\Lambda_*(2^{sj})}{\Lambda_*(2^{s(j+l_0)})}\sum \limits
_{j'\ge j} \frac{\Lambda_*(2^{sj'})}
{\Lambda_*(2^{sj})}\Psi_w^q(2^{sj'}).
$$
Therefore, for any $\xi\in {\bf V}^{\cal A}_{j-j_0}(\xi_*)$,
$\xi'\in {\bf V}_{l_0}^{\cal A}(\xi)$
$$
\frac{\|w\|_{l_q({\cal A}_{\xi'})}} {\|w\|_{l_q({\cal A}_\xi)}}
\stackrel{(\ref{ststst})}{\underset{{\mathfrak{Z}}}{\lesssim}}
2^{-\frac{\theta sl_0}{q}} \frac{\Lambda_*^{\frac 1q}(2^{sj})}
{\Lambda_*^{\frac 1q}(2^{s(j+l_0)})} \stackrel{(\ref{rho_yy})}
{\underset{{\mathfrak{Z}}} {\lesssim}} 2^{-\frac{\theta
sl_0}{2q}}.
$$
Hence, for sufficiently large $l_0$ there is $\lambda\in (0, \,
1)$ such that $\frac{\|w\|_{l_q({\cal A}_{\xi'})}}{\|w\|_{l_q
({\cal A}_\xi)}} \le \lambda$, $\xi'\in {\bf V}_{l_0}^{\cal
A}(\xi)$.

For any $\xi\in {\bf V}_{j-j_0}^{\cal A}(\xi_*)$
$$
\|w\|_{l_q({\cal A}_\xi)}u(\xi) \stackrel{(\ref{ujwj}),
(\ref{ststst})}{\underset{{\mathfrak{Z}}}
{\asymp}} 2^{-\frac{\theta sj}{q}}
\left(\sum \limits _{j'\ge j}
\Psi_w^q(2^{sj'})\frac{\Lambda_*(2^{sj'})}
{\Lambda_*(2^{sj})}\right)^{\frac 1q}\cdot
2^{\frac{\theta sj}{q}} \Psi_u(2^{sj})=
$$
$$
= \left(\sum \limits _{j'\ge j}
\Psi_w^q(2^{sj'})\frac{\Lambda_*(2^{sj'})}
{\Lambda_*(2^{sj})}\right)^{\frac 1q}\Psi_u
(2^{sj}).
$$
It remains to take the supremum over $j\ge j_0$ and apply Theorem
\ref{main_plq}.
\end{proof}

{\bf Example 2.} Suppose that there exists $C_*\ge 1$ such that
for any $j\in \Z_+$, $j'\ge j$, $\xi \in {\bf V}_{j}^{\cal
A}(\xi)$
\begin{align}
\label{cs1jgt}
C_*^{-1}\cdot 2^{\psi(j')-\psi(j)}\le {\rm
card} \, {\bf V}^{\cal A}_{j'-j}(\xi) \le C_*\cdot
2^{\psi(j')-\psi(j)}, \quad 2^{\psi(j)} =j^{\gamma_*}\tau_*(j);
\end{align}
here $\gamma_*>0$, $\tau_*: (0, \, \infty) \rightarrow (0, \,
\infty)$ is an absolutely continuous function such that $\lim _{y\to
\infty} \frac{y\tau_*'(y)}{\tau_*(y)}=0$. Suppose that for any $\xi \in {\bf
V}^{\cal A}_j(\xi_0)$
\begin{align}
\label{uujaurhou} u(\xi)=u_j=j^{-\alpha_u} \rho_u(j), \quad w(\xi)
= w_j= j ^{-\alpha_w} \rho_w(j),
\end{align}
where $\rho_u$, $\rho_w: (0, \, \infty) \rightarrow (0, \,
\infty)$ are absolutely continuous functions such that $\lim
_{y\to \infty} \frac{y\rho_u'(y)}{\rho_u(y)}=\lim _{y\to \infty}
\frac{y\rho_w'(y)}{\rho_w(y)}=0$.

As in Example 1, we set ${\mathfrak{Z}} =
(u, \, w, \, \psi, \, C_*, \, p, \, q)$.

\begin{Trm}
Suppose that $j_0=2^{k_0}$, $k_0\in \Z_+$, $\xi_*\in {\bf V}
_{j_0} ^{\cal A}(\xi_0)$.
\begin{enumerate}
\item Let $-\alpha_w+ \frac 1q+\frac{\gamma_*}{q} <0$. Set $\alpha=\alpha_u+\alpha_w$,
$\rho(t)=\rho_u(t)\rho_w(t)$. If $M_{j_0}:=\sup _{j\ge j_0}
j^{-\alpha+\frac 1q+\frac{1}{p'}}\rho(j)<\infty$, then
$\mathfrak{S}_{{\cal A}_{\xi_*},u,w}^{p,q}
\underset{\hat{\mathfrak{Z}}}{\asymp} M_{j_0}$. In particular, if
$-\alpha+\frac 1q+\frac{1}{p'}<0$, then $\mathfrak{S}_{{\cal
A}_{\xi_*},u,w}^{p,q} \underset{\hat{\mathfrak{Z}}}{\asymp}
j_0^{-\alpha+\frac 1q+\frac{1}{p'}}\rho(j_0)$.
\item Let $-\alpha_w+ \frac 1q+\frac{\gamma_*}{q}=0$,
$-\alpha_u+\frac{1}{p'}-\frac{\gamma_*}{q}=0$,
$$
\tilde M_{k_0}:=\sup _{k\in \Z_+} \rho_u(2^{k_0+k}) \left( \sum
\limits _{t\ge k} \rho_w^q(2^{k_0+t}) \frac{\tau_*(2^{k_0+t})}
{\tau_*(2^{k_0+k})}\right)^{\frac 1q}<\infty.
$$
Then $\mathfrak{S}_{{\cal A}_{\xi_*},u,w}^{p,q}
\underset{\hat{\mathfrak{Z}}}{\asymp} \tilde M_{k_0}$.
\end{enumerate}
\end{Trm}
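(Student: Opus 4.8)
The plan is to handle both parts uniformly by \emph{dyadic coarsening}: since the weights in (\ref{uujaurhou}) decay only polynomially, the contraction condition (\ref{wq_lam}) fails level by level, and one must group the levels geometrically so that Theorem \ref{main_plq} (valid in the range $1<p<q<\infty$ of this section) becomes applicable. Set $j_k=2^{k_0+k}$ and $J=\{j_k\}_{k\ge 0}$ (so $j_0=2^{k_0}$), form the reduced tree $\mathcal{A}_J$ with weights $u_J$, $w_J$ as in (\ref{bf_v_a_j_bf_v1})--(\ref{u_w}), and apply Lemma \ref{red} to get the upper bound $\mathfrak{S}^{p,q}_{\mathcal{A}_{\xi_*},u,w}\le \mathfrak{S}^{p,q}_{\mathcal{A}_J,u_J,w_J}$. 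The lower bound I obtain from Lemma \ref{low_est} tested at the vertices $\xi_{k,i}$ of level $j_k$; the key clean fact is that for such $\xi$ one has $\|w\|_{l_q(\mathcal{A}_{\xi_{k,i}})}=\|w_J\|_{l_q((\mathcal{A}_J)_{\xi_{k,i}})}$ \emph{exactly}, because the blocks $\mathcal{A}_{k',i'}$ with $\xi_{k',i'}\ge \xi_{k,i}$ partition $\mathcal{A}_{\xi_{k,i}}$.

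First I would evaluate the coarsened weights. Each block $\mathcal{A}_{k,i}$ occupies the $\asymp j_k$ levels $[j_k,j_{k+1})$, on which $j'\asymp j_k$ and, by Lemma \ref{sum_lem}, $\tau_*,\rho_u,\rho_w$ are $\asymp$-constant; summing (\ref{cs1jgt}) and (\ref{uujaurhou}) over the block gives $u_J(\xi_{k,i})\asymp j_k^{\frac1{p'}-\alpha_u}\rho_u(j_k)$ and $w_J(\xi_{k,i})\asymp j_k^{\frac1q-\alpha_w}\rho_w(j_k)$. The branching of $\mathcal{A}_J$ is $2^{\psi(j_{k+1})-\psi(j_k)}\asymp 2^{\gamma_*}$ and the adjacent $u_J$-ratios are bounded, so (\ref{cv1k}) and the first half of (\ref{wq_lam}) hold. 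For the tail norm I split into the two regimes. In part 1 the condition $-\alpha_w+\frac1q+\frac{\gamma_*}{q}<0$ means the block contributions decay geometrically in $k'$, so the tail is dominated by its first term and $\|w_J\|_{l_q((\mathcal{A}_J)_{\xi_{k,i}})}\asymp j_k^{\frac1q-\alpha_w}\rho_w(j_k)$; in part 2 the exponent is critical, $\gamma_*+1-\alpha_w q=0$, and the tail is the genuine sum $\|w_J\|_{l_q((\mathcal{A}_J)_{\xi_{k,i}})}^q\asymp j_k^{-\gamma_*}\tau_*(j_k)^{-1}\sum_{s\ge k}\rho_w^q(j_s)\tau_*(j_s)$. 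In either case the ratio of tail norms across $l_0$ coarsened levels is $\asymp 2^{-l_0\gamma_*/q}$ up to a slowly varying factor that (\ref{rho_yy}) bounds by $2^{l_0\varepsilon/q}$ with $\varepsilon<\gamma_*$; hence it is $\le\lambda<1$ for $l_0$ large, verifying (\ref{wq_lam}), and Theorem \ref{main_plq} applies to $\mathcal{A}_J$.

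It then remains to identify $\sup_{k,i}u_J(\xi_{k,i})\|w_J\|_{l_q((\mathcal{A}_J)_{\xi_{k,i}})}$. In part 1 this product reduces to $j_k^{-\alpha+\frac1q+\frac1{p'}}\rho(j_k)$, whose supremum over $k$ is comparable to $M_{j_0}=\sup_{j\ge j_0}j^{-\alpha+\frac1q+\frac1{p'}}\rho(j)$, since the full and dyadically sampled suprema agree up to slowly varying constants (and when $-\alpha+\frac1q+\frac1{p'}<0$ the supremum sits at $j_0$, giving the particular case). In part 2, using $-\alpha_u+\frac1{p'}=\frac{\gamma_*}{q}$, the product simplifies to $\rho_u(j_k)\bigl(\sum_{s\ge k}\rho_w^q(j_s)\tau_*(j_s)/\tau_*(j_k)\bigr)^{1/q}$, which is exactly the defining expression of $\tilde M_{k_0}$. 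For the lower bound I feed into Lemma \ref{low_est}, at $\xi=\xi_{k,i}$, the elementary estimate $\bigl(\sum_{\xi_*\le\xi'\le\xi}u^{p'}(\xi')\bigr)^{1/p'}\gtrsim j_k^{\frac1{p'}-\alpha_u}\rho_u(j_k)\asymp u_J(\xi_{k,i})$ (keeping only the top half $[j_k/2,j_k]$ of the path) together with the exact identity $\|w\|_{l_q(\mathcal{A}_{\xi_{k,i}})}=\|w_J\|_{l_q((\mathcal{A}_J)_{\xi_{k,i}})}$; taking the supremum over $k$ reproduces the same quantity as the upper bound, closing the two-sided estimate.

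The main obstacle I anticipate is the critical regime of part 2: neither the tail sum of $w$ nor the path sum of $u$ collapses to a single term, so everything hinges on the dyadic blocking being at exactly the right scale---fine enough to keep the branching bounded (forcing geometric level spacing, blocks of length $\asymp j_k$) yet coarse enough to force (\ref{wq_lam}). The delicate point is the contraction estimate, where one must check that the slowly varying correction controlled by (\ref{rho_yy}) genuinely stays below the borderline geometric gain $2^{-l_0\gamma_*/q}$, and that the dyadically sampled supremum is honestly comparable to $\tilde M_{k_0}$; the convergent regime of part 1 is then a routine specialization.
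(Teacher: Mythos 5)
Your proposal is correct and follows essentially the same route as the paper's own proof: dyadic coarsening $j_k=2^{k_0+k}$, Lemma \ref{red} for the upper bound, verification of the hypotheses of Theorem \ref{main_plq} for the reduced tree ${\cal A}_J$ with weights $u_J,w_J$ (a step the paper compresses into ``the further arguments are the same as in Example 1''), and Lemma \ref{low_est} combined with the exact identity $\|w\|_{l_q({\cal A}_{\xi_{k,i}})}=\|w_J\|_{l_q(({\cal A}_J)_{\xi_{k,i}})}$ for the lower bound. The only detail worth flagging is that your top-half path estimate for $\bigl(\sum_{\xi_*\le\xi'\le\xi_{k,i}}u^{p'}(\xi')\bigr)^{1/p'}\gtrsim u_J(\xi_{k,i})$ is valid only for $k\ge 1$ (exactly as the paper states), which is harmless since the $k=0$ term of the supremum is comparable to the $k=1$ term.
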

\begin{proof}
Prove the upper estimate. Let $j_k=2^{k_0+k}$, $k\in \Z_+$,
$J=\{j_k\}_{k\in \Z_+}$. Define the tree ${\cal A}_J$ and weights
$w_J$, $u_J$ by (\ref{bf_v_a_j_bf_v1}), (\ref{u_w}).
Since
\begin{align}
\label{cjjkaxiki1} {\rm card}\, \{\xi\in {\bf V}_{j-j_k}^{\cal A}
(\xi_{k,i})\}\stackrel{(\ref{rho_yy}),(\ref{cs1jgt})}
{\underset{\mathfrak{Z}}{\lesssim}} 1, \quad j_k\le j<j_{k+1},
\end{align}
we have ${\rm card}\, {\bf V}({\cal A}_{k,i}) \underset{
{\mathfrak{Z}}}{\asymp} 2^{k_0+k}$. Hence,
$$
(u_J)({\xi_{k,i}}) \stackrel{(\ref{uujaurhou})}
{\underset{{\mathfrak{Z}}}{\asymp}}
2^{\left(-\alpha_u+\frac{1}{p'}\right)(k_0+k)}\rho_u(2^{k_0+k}),
\quad (w_J)({\xi_{k,i}}) \stackrel{(\ref{uujaurhou})}
{\underset{{\mathfrak{Z}}}{\asymp}}
2^{\left(-\alpha_w+\frac 1q\right)(k_0+k)}\rho_w(2^{k_0+k}),
$$
$$
{\rm card}\, {\bf V}_{k'-k}^{{\cal A}_J}(\xi_{k,i})
={\rm card}\, {\bf V}_{j_{k'}-j_k}^{{\cal A}}(\xi_{k,i})
\stackrel{(\ref{cs1jgt})}{\underset{{\mathfrak{Z}}}{\asymp}}
2^{\psi_J(k')-\psi_J(k)},
\quad k'\ge k, \quad 2^{\psi_J(l)}=2^{\gamma_*(k_0+l)}
\tau_*(2^{k_0+l}).
$$

In the case 1 we get
$$
\|w_J\|_{l_q(({\cal A}_J)_{\xi_{k,i}})} \underset
{{\mathfrak{Z}}} {\asymp} 2^{\left(-\alpha_w+\frac
1q\right)(k_0+k)}\rho_w(2^{k_0+k}),
$$
\begin{align}
\label{wj} \sup _{k\in \Z_+}\|w_J\|_{l_q(({\cal
A}_J)_{\xi_{k,i}})} u_J(\xi_{k,i}) \underset {{\mathfrak{Z}}}
{\asymp} \sup _{l\ge k_0} 2^{\left(-\alpha+\frac
1q+\frac{1}{p'}\right)l} \rho(2^l)
\underset{\hat{\mathfrak{Z}}}{=} M_{j_0}.
\end{align}
In the case 2 we have
\begin{align}
\label{uj} (u_J)({\xi_{k,i}}) \underset{{\mathfrak{Z}}}{\asymp}
2^{\frac{\gamma_*(k_0+k)}{q}} \rho_u(2^{k_0+k}), \quad
(w_J)({\xi_{k,i}}) \underset{{\mathfrak{Z}}}{\asymp}
2^{-\frac{\gamma_*(k_0+k)}{q}} \rho_w(2^{k_0+k}).
\end{align}
The further arguments are the same as in Example 1.

In order to prove the lower estimate, we notice that $\|w\|_{l_q
({\cal A}_{\xi_{k,i}})}=\|w_J\|_{l_q(({\cal A}_J)_{\xi_{k,i}})}$,
$\left(\sum \limits _{\xi_*\le \xi'\le \xi_{k,i}} u^{p'}(\xi')
\right)^{\frac{1}{p'}} \stackrel{(\ref{cjjkaxiki1})}
{\underset{\mathfrak{Z}}{\gtrsim}} (u_J)({\xi_{k,i}})$ for $k\ge
1$ and apply Lemma \ref{low_est} together with (\ref{wj}) and
(\ref{uj}).
\end{proof}

\section{An estimate for the norm of a weighted summation operator
on a tree: case $p\ge q$}

Suppose that conditions of Theorem \ref{p_ge_q} hold.

We shall use the following notation.

Let $k\in \N\cup \{\infty\}$, ${\cal T}$, ${\cal T}_1, \, \dots ,
\, {\cal T}_k$ be trees that have no common vertices, let $v_1, \,
\dots , \, v_k\in {\bf V}({\cal T})$, $w_j\in {\bf V}({\cal
T}_j)$, $j=1, \, \dots , \, k$. Denote by
$$
{\bf J}({\cal T}, \, {\cal T}_1, \, \dots , \, {\cal T}_k; v_1, \,
w_1, \, \dots , \, v_k, \, w_k)
$$
the tree obtained from ${\cal T}$, ${\cal T}_1, \, \dots , \,
{\cal T}_k$ by connecting the vertices $v_j$ and $w_j$ by an edge
for each $j=1, \, \dots , \, k$.

Let $({\cal D}, \, \xi_0)$ be a tree, $\xi \in {\bf V}({\cal
D})$, $n\in \N$, let $T=\{A_1, \, \dots, \, A_n\}$ be a partition of ${\bf
V}^{\cal D}_1(\xi)$ into nonempty subsets,
$A_j=\{\xi_{j,i}\}_{i=1}^{k_j}$. Define the graph $G_{\xi,T}({\cal
D})$ as follows.
\begin{enumerate}
\item Let $\xi=\xi_0$. Then we denote by $G_{\xi,T}({\cal D})$ the
graph that is a disjoint union of trees $\tilde {\cal
D}_j:={\bf J}(\{\eta_j\}, \, {\cal D}_{\xi_{j,1}}, \, \dots, {\cal
D}_{\xi_{j,k_j}}; \, \eta_j, \, \xi_{j,1}, \, \dots, \, \eta_j, \,
\xi_{j,k_j})$.
\item Let $\xi>\xi_0$, and let $\eta$ be the direct predecessor of $\xi$.
Then we set
$$
G_{\xi,T}({\cal D})={\bf J}({\cal D}\backslash {\cal D}_\xi, \,
\tilde {\cal D}_1, \, \dots, \, \tilde {\cal D}_n; \, \eta, \,
\eta_1, \, \dots, \, \eta, \, \eta_n),
$$
where the vertices $\eta_j$ and trees $\tilde {\cal D}_j$ are
defined above.
\end{enumerate}
Let $\overline{u}, \, \overline{w}:{\bf V}({\cal D}) \rightarrow
(0, \, \infty)$, $\xi\in {\bf V}({\cal D})$.
Define weights $\overline{u}_{\xi,T}$ and $\overline{w}_{\xi,T}$ on
the graph $G_{\xi,T}({\cal D})$ as follows. If $\zeta\in {\bf
V}({\cal D})\backslash {\bf V}({\cal D}_\xi)$ or $\zeta \in \cup
_{j=1}^n \cup _{i=1}^{k_j} {\bf V}({\cal D}_{\xi_{j,i}})$, then
we set $\overline{u}_{\xi,T}(\zeta)=\overline{u}(\zeta)$,
$\overline{w}_{\xi,T}(\zeta)=\overline{w}(\zeta)$; if $\zeta=\eta_j$ for some
$j\in \{1, \, \dots, \, n\}$, then we set
\begin{align}
\label{uxit_wxit_ch}
\overline{u}_{\xi,T}(\eta_j)=n^{\frac 1p} \overline{u}(\xi), \quad
\overline{w}_{\xi,T}(\eta_j)=n^{-\frac 1q} \overline{w}(\xi).
\end{align}

If each element of $T$ is a singlepoint,
then we denote
\begin{align}
\label{u_xi_t_g_xi_t} G_{\xi,T}({\cal D})=G_\xi({\cal D}), \quad
\overline{u}_{\xi,T}=\overline{u}_{\xi},\quad
\overline{w}_{\xi,T}=\overline{w}_{\xi}.
\end{align}

\begin{Lem} \label{razv}
For any $1\le p, \, q\le \infty$
\begin{align}
\label{spq} \mathfrak{S}^{p,q}_{\overline{u}, \overline{w}, {\cal
D}}\le \mathfrak{S}^{p,q}_{\overline{u}_{\xi,T},
\overline{w}_{\xi,T}, G_{\xi,T}({\cal D})}.
\end{align}
\end{Lem}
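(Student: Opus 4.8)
The plan is to prove (\ref{spq}) by lifting each test function on ${\cal D}$ to one on $G:=G_{\xi,T}({\cal D})$ with the same $l_p$-norm whose image under the summation operator has the same $l_q$-norm, so that the inequality follows with no loss. Write $S_{\cal D}:=S_{\overline{u},\overline{w},{\cal D}}$ and $S_G:=S_{\overline{u}_{\xi,T},\overline{w}_{\xi,T},G}$ for the two operators. Fix $f:{\bf V}({\cal D})\rightarrow \R$ with $\|f\|_{l_p({\cal D})}<\infty$ and define $g:{\bf V}(G)\rightarrow \R$ by $g(\zeta)=f(\zeta)$ for every $\zeta$ lying in ${\bf V}({\cal D})\backslash {\bf V}({\cal D}_\xi)$ or in $\cup_{j}\cup_i {\bf V}({\cal D}_{\xi_{j,i}})$ (these are literally the old vertices of ${\cal D}$ other than $\xi$), and by $g(\eta_j)=n^{-\frac 1p}f(\xi)$ at each of the $n$ new copies $\eta_1,\dots,\eta_n$ of $\xi$. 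If I verify $\|g\|_{l_p(G)}=\|f\|_{l_p({\cal D})}$ and $\|S_G g\|_{l_q(G)}=\|S_{\cal D}f\|_{l_q({\cal D})}$, then
$$\|S_{\cal D}f\|_{l_q({\cal D})}=\|S_G g\|_{l_q(G)}\le \mathfrak{S}^{p,q}_{\overline{u}_{\xi,T},\overline{w}_{\xi,T},G}\,\|g\|_{l_p(G)}=\mathfrak{S}^{p,q}_{\overline{u}_{\xi,T},\overline{w}_{\xi,T},G}\,\|f\|_{l_p({\cal D})},$$
and taking the supremum over $f$ gives (\ref{spq}).

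The equality of $l_p$-norms is immediate: the only values that change are at the $\eta_j$, and $\sum_{j=1}^n |g(\eta_j)|^p=n\cdot n^{-1}|f(\xi)|^p=|f(\xi)|^p$ exactly replaces the single term $|f(\xi)|^p$. For the $l_q$-side the crucial identity, using (\ref{uxit_wxit_ch}), is $\overline{u}_{\xi,T}(\eta_j)g(\eta_j)=n^{\frac 1p}\overline{u}(\xi)\cdot n^{-\frac 1p}f(\xi)=\overline{u}(\xi)f(\xi)$, so each copy $\eta_j$ contributes to every downstream partial sum exactly what $\xi$ contributed in ${\cal D}$. I would then compare $S_{\cal D}f$ and $S_G g$ vertex by vertex according to the position of $\zeta$ relative to $\xi$. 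For $\zeta\in {\bf V}({\cal D})\backslash {\bf V}({\cal D}_\xi)$ no $\eta_j$ lies below $\zeta$, the order ideal $\{\zeta'\le \zeta\}$ is unchanged, and $S_G g(\zeta)=S_{\cal D}f(\zeta)$. At $\zeta=\eta_j$ the ideal is $\{\zeta'\le \eta\}\cup\{\eta_j\}$ (with $\eta$ the predecessor of $\xi$, or just $\{\eta_j\}$ when $\xi=\xi_0$), whence $S_G g(\eta_j)=n^{-\frac 1q}\overline{w}(\xi)\sum_{\zeta'\le \xi}\overline{u}(\zeta')f(\zeta')=n^{-\frac 1q}S_{\cal D}f(\xi)$, and summing $q$-th powers over $j$ returns $n\cdot n^{-1}|S_{\cal D}f(\xi)|^q=|S_{\cal D}f(\xi)|^q$. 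Finally, for $\zeta\in {\bf V}({\cal D}_{\xi_{j,i}})$ the ideal in $G$ is $\{\zeta'\le \eta\}\cup\{\eta_j\}\cup\{\xi_{j,i}\le \zeta'\le \zeta\}$, and the identity above makes the $\eta_j$-term coincide with the $\xi$-term of the corresponding ideal in ${\cal D}$, so again $S_G g(\zeta)=S_{\cal D}f(\zeta)$. Adding the three groups of $q$-th powers reproduces $\|S_{\cal D}f\|_{l_q({\cal D})}^q$ exactly.

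The substance of the argument is not any single estimate but the simultaneous calibration of three scalings: splitting the mass of $f(\xi)$ into $n$ equal parts $n^{-1/p}f(\xi)$ preserves the $l_p$-norm; inflating $\overline{u}$ at $\eta_j$ by $n^{1/p}$ preserves the $u$-weighted contribution to every partial sum that passes through $\eta_j$; and deflating $\overline{w}$ at $\eta_j$ by $n^{-1/q}$, together with the presence of $n$ copies, preserves the $l_q$-mass at the level of $\xi$. I expect the only real care to be the bookkeeping of the order ideals in $G$ against those in ${\cal D}$, so that the claimed vertex-by-vertex identities hold, and the handling of the degenerate cases: $\xi=\xi_0$, where $G$ is a disjoint forest and the sums over $\{\zeta'\le \eta\}$ are empty, and the endpoints $p=\infty$ or $q=\infty$, where the conventions $n^{1/p}=1$ and $n^{-1/q}=1$ turn the relevant equalities into statements about maxima. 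Since both norm identities are exact, the inequality $\mathfrak{S}^{p,q}_{\overline{u},\overline{w},{\cal D}}\le \mathfrak{S}^{p,q}_{\overline{u}_{\xi,T},\overline{w}_{\xi,T},G_{\xi,T}({\cal D})}$ follows with no implicit constants.
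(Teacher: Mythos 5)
Your proposal is correct and follows essentially the same route as the paper's own proof: the identical lifting $g(\eta_j)=n^{-1/p}f(\xi)$ (the paper's $f_{\xi,T}$), the same key cancellation $\overline{u}_{\xi,T}(\eta_j)g(\eta_j)=\overline{u}(\xi)f(\xi)$, and the same three-part vertex-by-vertex comparison (vertices outside ${\cal D}_\xi$, the $n$ copies $\eta_j$ replacing $\xi$, and the subtrees ${\cal D}_{\xi_{j,i}}$), yielding exact preservation of both norms. Your explicit attention to the degenerate cases $\xi=\xi_0$ and $p=\infty$ or $q=\infty$ is a small addition the paper leaves implicit, but the argument is the same.
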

\begin{proof}
Let $f:{\bf V}({\cal D})\rightarrow \R_+$, $\|f\|_{l_p({\cal
D})}=1$. Define the function $f_{\xi,T}: {\bf V}(G_\xi({\cal D}))
\rightarrow \R_+$ as follows: we set $f_{\xi,T}(\zeta)=f(\zeta)$ for
$\zeta\in {\bf V}({\cal D}) \backslash {\bf V}({\cal D}_\xi)$ or for
$\zeta \in \cup _{j=1}^n \cup _{i=1}^{k_j} {\bf V}({\cal
D}_{\xi_{j,i}})$, and we set $f_{\xi,T}(\eta_j)=n^{-\frac 1p} f(\xi)$, $1\le
j\le n$. Then $\|f_{\xi,T}\|_{l_p(G_{\xi,T}({\cal
D}))}=\|f\|_{l_p({\cal D})}$. We have
$$
\sum \limits _{\zeta \in {\bf V}({\cal D})} \overline{w}^q(\zeta)
\left(\sum \limits _{\zeta'\le \zeta}
\overline{u}(\zeta')f(\zeta')\right)^q =\sum \limits _{\zeta \in
{\bf V}({\cal D}) \backslash {\bf V}({\cal
D}_\xi)}\overline{w}^q(\zeta) \left(\sum \limits _{\zeta'\le
\zeta} \overline{u}(\zeta')f(\zeta')\right)^q+
$$
$$
+\overline{w}^q(\xi)\left(\sum \limits _{\zeta'\le \xi}
\overline{u}(\zeta')f(\zeta')\right)^q+ \sum \limits _{j=1}^n \sum
\limits _{i=1}^{k_j} \sum \limits _{\zeta \in {\bf V}({\cal
D}_{\xi_{j,i}})} \overline{w}^q (\zeta) \left(\sum \limits
_{\zeta'\le \zeta} \overline{u} (\zeta')f(\zeta')\right)^q=:S.
$$
Since ${\bf V}({\cal D}) \backslash {\bf V}({\cal D}_\xi)
\subset {\bf V}(G_{\xi,T}({\cal D}))$, by definitions of
$\overline{u}_{\xi,T}$, $\overline{w}_{\xi,T}$ and ${f}_{\xi,T}$
we get
\begin{align}
\label{s1} \sum \limits _{\zeta \in {\bf V}({\cal D}) \backslash
{\bf V}({\cal D}_\xi)}\overline{w}^q(\zeta) \left(\sum \limits
_{\zeta'\le \zeta} \overline{u}(\zeta')f(\zeta')\right)^q = \sum
\limits _{\zeta \in {\bf V}({\cal D}) \backslash {\bf V}({\cal
D}_\xi)}\overline{w}_{\xi,T}^q(\zeta) \left(\sum \limits
_{\zeta'\le \zeta} \overline{u}_{\xi,T}(\zeta') f_{\xi,T}
(\zeta')\right)^q.
\end{align}
Let $1\le j\le n$. Then
$$
\overline{w}_{\xi,T}^q(\eta_j) \left(\sum \limits _{\zeta' \in
{\bf V}(G_{\xi,T}({\cal D})), \, \zeta'\le \eta_j}
\overline{u}_{\xi,T}(\zeta') {f}_{\xi,T}(\zeta')\right)^q=
$$
$$
=n^{-1} \overline{w}^q(\xi)\left(\sum \limits _{\zeta' \in {\bf
V}({\cal D}), \, \zeta'< \xi} \overline{u}(\zeta')
{f}(\zeta')+n^{\frac 1p}\overline{u}(\xi) \cdot n^{-\frac 1p}
f(\xi)\right)^q=
$$
$$
=n^{-1} \overline{w}^q(\xi)\left(\sum \limits _{\zeta' \in {\bf
V}({\cal D}), \, \zeta'\le \xi} \overline{u}(\zeta')
{f}(\zeta')\right)^q.
$$
Hence,
\begin{align}
\label{s2} \overline{w}^q(\xi)\left(\sum \limits _{\zeta' \in {\bf
V}({\cal D}), \, \zeta'\le \xi} \overline{u}(\zeta')
{f}(\zeta')\right)^q=\sum \limits _{j=1}^n
\overline{w}_{\xi,T}^q(\eta_j) \left(\sum \limits _{\zeta' \in
{\bf V}(G_{\xi,T}({\cal D})), \, \zeta'\le \eta_j}
\overline{u}_{\xi,T}(\zeta') {f}_{\xi,T}(\zeta')\right)^q.
\end{align}
Let $\zeta \in {\bf V}({\cal D}_{\xi_{j,i}})$, $1\le j\le n$,
$1\le i\le k_j$. Then
$$
\sum \limits _{\zeta'\in {\bf V}(G_{\xi,T}({\cal D})), \,
\zeta'\le \zeta} \overline{u}_{\xi,T}(\zeta')
f_{\xi,T}(\zeta')=\sum \limits _{\zeta'\in {\bf V}(G_{\xi,T}({\cal
D})), \, \zeta'\le \zeta, \, \zeta'\ne \eta_j}
\overline{u}_{\xi,T}(\zeta') f_{\xi,T}(\zeta')+
\overline{u}_{\xi,T}(\eta_j)f_{\xi,T}(\eta_j)=
$$
$$
=\sum \limits _{\zeta'\in {\bf V}({\cal D}), \, \zeta'\le \zeta,
\, \zeta'\ne \xi} \overline{u}(\zeta') f(\zeta')+ n^{\frac
1p}\overline{u}(\xi)\cdot n^{-\frac 1p}f(\xi)= \sum \limits
_{\zeta'\in {\bf V}({\cal D}), \, \zeta'\le \zeta}
\overline{u}(\zeta') f(\zeta').
$$
Therefore,
\begin{align}
\label{s3} \overline{w}^q(\zeta)\left(\sum \limits _{\zeta'\in {\bf
V}({\cal D}), \, \zeta'\le \zeta} \overline{u}(\zeta')
f(\zeta')\right)^q = \overline{w}_{\xi,T}^q(\zeta)\left(\sum
\limits _{\zeta'\in {\bf V}(G_{\xi,T}({\cal D})), \, \zeta'\le
\zeta} \overline{u}_{\xi,T}(\zeta') f_{\xi,T}(\zeta')\right)^q.
\end{align}

From (\ref{s1}), (\ref{s2}) and (\ref{s3}) it follows that
$$
S= \sum \limits _{\zeta \in {\bf V}(G_{\xi,T}({\cal D}))}
\overline{w}_{\xi,T}^q (\zeta) \left(\sum \limits _{\zeta'\le
\zeta} \overline{u}_{\xi,T}(\zeta')f_{\xi,T}(\zeta')\right)^q\le
\left(\mathfrak{S}^{p,q}_{\overline{u}_{\xi,T},\overline{w}_{\xi,T},
G_{\xi,T}({\cal D})}\right)^q.
$$
This completes the proof of (\ref{spq}).
\end{proof}

Denote by $[{\cal A}]_{\le n}$ a subtree in ${\cal A}$ such that
$$
{\bf V}([{\cal A}]_{\le n})= \cup _{j=0}^n {\bf V}_j^{\cal
A}(\xi_0).
$$

\renewcommand{\proofname}{\bf Proof of Theorem \ref{p_ge_q}}
\begin{proof}
It suffices to consider the case $p<\infty$ and $N<\infty$.

For $0\le j\le N$ we construct the graph ${\cal G}_{j,{\cal A}}$
and the functions $u^{(j)}$, $w^{(j)}:{\bf V}({\cal G}_{j,{\cal
A}}) \rightarrow (0, \, \infty)$ with the following properties:
\begin{enumerate}
\item ${\cal G}_{N,{\cal A}}={\cal A}$, $u^{(N)}=u$, $w^{(N)}=w$.
\item If $1\le j\le N-1$, then ${\cal G}_{j,{\cal A}}$ is a tree with
the minimal vertex $\xi_0$; here
\begin{align}
\label{gjaj} [{\cal G}_{j,{\cal A}}]_{\le j-1}=[{\cal A}]_{\le
j-1}, \quad {\bf V}_{N}^{{\cal G}_{j,{\cal A}}}(\xi_0)={\bf
V}_{\max}({\cal G}_{j,{\cal A}});
\end{align}
\begin{align}
\label{cv1gj} {\rm card}\, {\bf V}_1^{{\cal G}_{j,{\cal A}}}(\xi)
={\rm card}\, {\bf V}_{N-j+1}^{\cal A}(\xi), \quad
\text{if}\quad \xi\in {\bf V}_{j-1}^{{\cal G}_{j,{\cal
A}}}(\xi_0);
\end{align}
\begin{align}
\label{cvvv} {\rm card}\, {\bf V}_1^{{\cal G}_{j,{\cal A}}}(\xi)
=1, \quad \text{if} \quad \xi\in {\bf V}_i^{{\cal G}_{j,{\cal
A}}}(\xi_0), \quad j\le i\le N-1.
\end{align}
In addition,
\begin{align}
\label{uj_uxi_wj_wxi_sq_n}
u^{(j)}(\xi)=u(\xi), \quad w^{(j)}(\xi)=w(\xi), \quad \xi \in {\bf
V}([{\cal G}_{j,{\cal A}}]_{\le j-1});
\end{align}
\begin{align}
\label{ujxi} u^{(j)}(\xi)\underset{C_*}{\asymp} u_i\cdot
2^{\frac{\psi(N)-\psi(i)}{p}}, \quad
w^{(j)}(\xi)\underset{C_*}{\asymp} w_i\cdot
2^{-\frac{\psi(N)-\psi(i)}{q}}, \quad \xi \in {\bf V}^{{\cal
G}_{j,{\cal A}}}_{i}(\xi_0), \quad j\le i\le N;
\end{align}
if $C_*=1$, then we have exact equalities in (\ref{ujxi}).
\item If $j=0$, then ${\cal G}_{j,{\cal A}}$ is a disjoint union of
pathes $\zeta_{k,0}<\zeta_{k,1}<\dots <\zeta_{k,N}$,
$1\le k\le {\rm card}\, {\bf V}^{\cal A}_{N}(\xi_0)$. In addition,
\begin{align}
\label{u0zeta_ki}
u^{(0)}(\zeta_{k,i})\underset{C_*}{\asymp} u_i\cdot
2^{\frac{\psi(N)-\psi(i)}{p}}, \quad
w^{(0)}(\zeta_{k,i})\underset{C_*}{\asymp} w_i\cdot
2^{-\frac{\psi(N)-\psi(i)}{q}}, \quad 0\le i\le N.
\end{align}
If $C_*=1$, then we have exact equalities in (\ref{u0zeta_ki}).
\item $\mathfrak{S}^{p,q}_{{\cal A},u,w}\le
\mathfrak{S}^{p,q}_{{\cal G}_{j,{\cal A}},u^{(j)},w^{(j)}}$.
\end{enumerate}

The graphs ${\cal G}_{j,{\cal A}}$ and the functions $u^{(j)}$,
$w^{(j)}$ will be constructed by induction on $j$. Suppose that
for some $0\le k\le N-1$ the trees ${\cal G}_{k+1,{\cal A}}$ and
the functions $u^{(k+1)}$, $w^{(k+1)}$ are constructed, and
suppose that assertions 1--4 hold with $j:=k+1$. Set ${\bf
V}_k^{{\cal G}_{k+1,{\cal A}}}(\xi_0)=\{\zeta_1, \, \dots, \,
\zeta_m\}$. From (\ref{gjaj}) it follows that $\{\zeta_1, \,
\dots, \, \zeta_m\}={\bf V}_k^{{\cal A}}(\xi_0)$,
$u^{(k+1)}(\zeta_t)\stackrel{(\ref{uj_uxi_wj_wxi_sq_n})}{=}u(\zeta_t)$,
$w^{(k+1)} (\zeta_t) \stackrel{(\ref{uj_uxi_wj_wxi_sq_n})} {=}
w(\zeta_t)$, $1\le t\le m$.

We set
$$
{\cal G}_{k,{\cal A}}=G_{\zeta_m}(\dots G_{\zeta_2}(G_{\zeta_1}
({\cal G}_{k+1,{\cal A}}))),
$$
$$
u_k=(((u_{k+1})_{\zeta_1})_{\zeta_2}\dots)_{\zeta_m}, \quad
w_k=(((w_{k+1})_{\zeta_1})_{\zeta_2}\dots)_{\zeta_m}
$$
(see (\ref{u_xi_t_g_xi_t})). From Lemma \ref{razv} and
the induction assumption we obtain assertion 4. Conditions (\ref{gjaj}),
(\ref{cv1gj}), (\ref{cvvv}) for $j:=k>0$ and the first part of assertion
3 hold by construction and by the induction hypothesis.

Estimate the values $u^{(k)}(\eta)$ and $w^{(k)}(\eta)$, $\eta\in {\bf
V}({\cal G}_{k,{\cal A}})$. Let $\eta \in {\bf V}_k({\cal
G}_{k,{\cal A}})$. Then ${\bf V}_1^{{\cal G}_{k,{\cal
A}}}(\eta)=\{\eta'\}$. There exists $1\le t\le m$ such that $\eta'\in
{\bf V} ^{{\cal G}_{k+1,{\cal A}}}_1(\zeta_t)$. From definition of
$u^{(k)}$ and $w^{(k)}$ and from (\ref{uj_uxi_wj_wxi_sq_n})
applied to $j:=k+1$ we get
$$
u^{(k)}(\eta)\stackrel{(\ref{uxit_wxit_ch}),(\ref{cv1gj})}{=}
u^{(k+1)}(\zeta_t) \left({\rm card}\, {\bf V}^{\cal
A}_{N-k}(\zeta_t)\right)^{\frac 1p} \stackrel{(\ref{cvjj0})}
{\underset{C_*}{\asymp}} u(\zeta_t) 2^{\frac{\psi(N)-\psi(k)}{p}},
$$
$$
w^{(k)}(\eta)\stackrel{(\ref{uxit_wxit_ch}),(\ref{cv1gj})}{=}
w^{(k+1)}(\zeta_t) \left({\rm card}\, {\bf V}^{\cal
A}_{N-k}(\zeta_t)\right)^{-\frac 1q} \stackrel{(\ref{cvjj0})}
{\underset{C_*}{\asymp}} w(\zeta_t)
2^{-\frac{\psi(N)-\psi(k)}{q}}.
$$
If $C_*=1$, then we have exact equalities.

Let $\eta \in {\bf V}({\cal G}_{k,{\cal A}}) \backslash {\bf
V}_k({\cal G}_{k,{\cal A}})$. Then
$u^{(k)}(\eta)=u^{(k+1)}(\eta)$, $w^{(k)}(\eta)=w^{(k+1)}(\eta)$.
This together with the induction assumption yields
(\ref{uj_uxi_wj_wxi_sq_n}) and (\ref{ujxi}) for $k>0$ and
the second part of assertion 3 for $k=0$.

Let us estimate $\mathfrak{S}^{p,q}_{{\cal G}_{0,{\cal
A}},u^{(0)},w^{(0)}}$. Set
\begin{align}
\label{m_st} m_*={\rm card}\, {\bf V}_{N}^{\cal A}(\xi_0)
\stackrel{(\ref{cvjj0})}{\underset{C_*}{\asymp}} 2^{\psi(N)}
\end{align}
(if $C_*=1$, then the exact equality holds). By assertion 3,
$\mathfrak{S}^{p,q}_{{\cal G}_{0,{\cal A}},u^{(0)},w^{(0)}}
\underset{C_*}{\asymp} \mathfrak{S}^{p,q}_{{\cal G}_{0,{\cal
A}},\tilde u,\tilde w}$, where
\begin{align}
\label{tr_gle} \tilde u(\zeta_{k,i})=\tilde u_i:=u_i\cdot
2^{\frac{\psi(N)-\psi(i)}{p}}, \quad \tilde w(\zeta_{k,i})=\tilde
w_i:=w_i\cdot 2^{-\frac{\psi(N)-\psi(i)}{q}}.
\end{align}

Let $f:{\bf V}({\cal G}_{0,{\cal A}})\rightarrow \R_+$,
$\|f\|_{l_p({\cal G}_{0,{\cal A}})}=1$. Set $\varphi(\zeta_{k,i})
=\varphi_{k,i}=f^p(\zeta_{k,i})$. Then
\begin{align}
\label{fl_w_r} \sum \limits _{k=1}^{m_*} \sum \limits _{j=0}^N
\varphi_{k,i}=1,
\end{align}
$$
\sum \limits _{\xi\in {\bf V}({\cal G}_{0,{\cal A}})} \tilde
w^q(\xi) \left(\sum \limits _{\xi'\le \xi} \tilde u(\xi')
f(\xi')\right)^q= \sum \limits _{k=1}^{m_*} \sum \limits _{j=0}^N
\tilde w_j^q \left(\sum \limits _{i=0}^j \tilde
u_i\varphi_{k,i}^{1/p}\right)^q=:{\cal F}(\varphi).
$$
Since $p\ge q$, the function $t\mapsto t^{\frac qp}$ is concave on
$\R_+$. This together with
the inverse Minkowski inequality
implies that ${\cal F}(\varphi)$ is concave
on the set of nonnegative functions $\varphi$.

Set $\tilde \varphi(\zeta_{k,i})=\tilde \varphi_i=
\frac{1}{m_*}\sum \limits_{l=1} ^{m_*} \varphi_{l,i}$, $1\le k\le
m_*$, $\tilde f_i=\tilde \varphi_i^{1/p}m_*^{1/p}$. Then
\begin{align}
\label{fjp1} \sum \limits _{j=0}^N \tilde \varphi_i =
\frac{1}{m_*} \sum \limits _{k=1}^{m_*} \sum \limits _{j=0}^N
\varphi_{k,i}\stackrel{(\ref{fl_w_r})}{=}\frac{1}{m_*},\quad \quad \sum
\limits _{j=0}^N \tilde f_j^p =1.
\end{align}
Notice that $\tilde \varphi(\zeta_{k,i}) =\frac{1}{ {\rm card}\,
\mathbb{S}_{m_*}}\sum \limits _{\pi \in \mathbb{S}_{m_*}} \varphi
_{\pi}(\zeta_{k,i})$ and ${\cal F}(\varphi)={\cal
F}(\varphi_{\pi})$ for any $\pi \in \mathbb{S}_{m_*}$, where
$\mathbb{S}_{m_*}$ is the set of all permutations of $m_*$
elements and $\varphi_\pi(\zeta_{k,i})=\varphi(\zeta_{\pi(k),i})$.
Since ${\cal F}$ is concave, the inequality ${\cal F}(\varphi)\le
{\cal F}(\tilde \varphi)$ holds. Therefore,
$$
\sum \limits _{k=1}^{m_*} \sum \limits _{j=0}^N \tilde w_j^q
\left(\sum \limits _{i=0}^j \tilde
u_i\varphi_{k,i}^{1/p}\right)^q\le \sum \limits _{k=1}^{m_*} \sum
\limits _{j=0}^N \tilde w_j^q \left(\sum \limits _{i=0}^j \tilde
u_i\tilde \varphi_i^{1/p}\right)^q \stackrel{(\ref{tr_gle})}{=}
$$
$$
=m_*\sum \limits _{j=0}^N w_j^q\cdot 2^{-\psi(N)+\psi(j)}\left(
\sum \limits _{i=0}^j u_i \cdot 2^{\frac{\psi(N)-\psi(i)}{p}}
m_*^{-\frac 1p}\tilde f_i\right)^q \stackrel{(\ref{m_st})}
{\underset{C_*, p,q}{\asymp}}
$$
$$
\asymp \sum \limits _{j=0}^N w_j^q\cdot 2^{\psi(j)}\left( \sum
\limits _{i=0}^j u_i \cdot 2^{-\frac{\psi(i)}{p}} \tilde
f_i\right)^q \stackrel{(\ref{fjp1})}{\le}
\left[\mathfrak{S}^{p,q}_{\hat u,\hat w}\right]^q.
$$
This completes the proof.
\end{proof}
\renewcommand{\proofname}{\bf Proof}

The similar assertion can be obtained for the weighted integration
operator on a metric tree. Let $\mathbb{A}=({\cal A}, \, \Delta)$,
where $({\cal A}, \, \xi_0)$ satisfies (\ref{cvjj0}) and ${\rm
card}\, {\bf V}_1^{\cal A}(\xi_0)=1$. Suppose that $\Delta((\xi',
\, \xi'')) =[a_j, \, b_j]$ for any $\xi'\in {\bf V}^{\cal
A}_j(\xi_0)$, $\xi''\in {\bf V}_1^{\cal A}(\xi')$. Let $x_0$ be
the minimal point in $\mathbb{A}$. Consider the weight functions
$g$, $v:\mathbb{A} \rightarrow (0, \, \infty)$ such that
$g(x)=g_0(|x-x_0|_{\mathbb{A}})$, $v(x)=v_0(|x-x_0|_{\mathbb{A}})$
(see (\ref{yx_tt})).

Set $R=\sum \limits _{j\in \Z_+} (b_j-a_j)$,
$$
\hat v_0(t)=v_0(t)\cdot 2^{\frac{\psi(j)}{q}}, \quad
\hat g_0(t)=g_0(t)\cdot 2^{-\frac{\psi(j)}{p}}, \quad
t=\sum \limits _{i=0}^{j-1}(b_i-a_i) +s, \quad s\in [a_j, \, b_j].
$$
Let $I_{g,v,x_0}:L_p(\mathbb{A}) \rightarrow L_q(\mathbb{A})$ be
defined by (\ref{i_uw}), and let $\hat I_{\hat g_0,\hat
v_0}f(t)=\hat v_0(t)\int \limits_0^t \hat g_0(x)f(x)\, dx$, $0\le
t<R$, $f\in L_p(0, \, R)$.
\begin{Trm}
Let $1\le q\le p\le \infty$. Then
$\|I_{g,v,x_0}\|_{L_p(\mathbb{A}) \rightarrow L_q(\mathbb{A})}
\underset{p,q,C_*}{\asymp} \|\hat I_{\hat g_0,\hat v_0}\|_{L_p(0,
\, R)\rightarrow L_q(0, \, R)}$. If $C_*=1$, then the exact
equality holds.
\end{Trm}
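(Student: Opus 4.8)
The plan is to carry the proof of Theorem~\ref{p_ge_q} over to the continuous setting, establishing the two bounds separately. As there, it is enough to treat $p<\infty$ and $N<\infty$ (so that $R<\infty$ and the tree is finite), the remaining cases being recovered by the usual monotone/limiting arguments and the endpoint conventions $1/p=0$, $1/q=0$. For the lower bound I would test $I_{g,v,x_0}$ on radial functions; for the upper bound I would unfold $\mathbb{A}$ into a disjoint union of isometric paths in the spirit of Lemma~\ref{razv}, and then use that, for $p\ge q$, the $L_p\to L_q$ norm over a disjoint union of identical copies of an operator $T$ equals $m_*^{1/q-1/p}\|T\|$, the extremum being attained on inputs that are equal across the copies (this is precisely the role of the concavity/symmetrization step of Theorem~\ref{p_ge_q}).

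For the lower bound, restrict to $f(x)=f_0(|x-x_0|_{\mathbb{A}})$. Along the unique path to a point at distance $t=|x-x_0|_{\mathbb{A}}$ the line integral in (\ref{i_uw}) collapses to $\int_0^t g_0(s)f_0(s)\,ds$, while by (\ref{cvjj0}) the level-$j$ portion of $\mathbb{A}$ consists of ${\rm card}\,{\bf V}_j^{\cal A}(\xi_0)\underset{C_*}{\asymp}2^{\psi(j)}$ edges (with equality when $C_*=1$, since $\psi(0)=0$). Setting $\hat f_0(t)=f_0(t)\,2^{\psi(j)/p}$ for $t$ in the level-$j$ portion, one obtains $\|f\|_{L_p(\mathbb{A})}\underset{C_*}{\asymp}\|\hat f_0\|_{L_p(0,R)}$; and since $\hat g_0\hat f_0=g_0f_0$ and $v_0\,2^{\psi(j)/q}=\hat v_0$, also $\|I_{g,v,x_0}f\|_{L_q(\mathbb{A})}\underset{C_*}{\asymp}\|\hat I_{\hat g_0,\hat v_0}\hat f_0\|_{L_q(0,R)}$, with exact equalities when $C_*=1$. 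As $f_0\mapsto\hat f_0$ is a bijection onto functions on $(0,R)$, taking the supremum over radial $f$ gives $\|I_{g,v,x_0}\|\underset{p,q,C_*}{\gtrsim}\|\hat I_{\hat g_0,\hat v_0}\|$ (equality, when $C_*=1$, of the supremum over radial inputs with $\|\hat I_{\hat g_0,\hat v_0}\|$).

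For the upper bound I would first prove the continuous counterpart of Lemma~\ref{razv}: unfolding $\mathbb{A}$ at a branching point $\zeta$ with $n$ children into $n$ isometric copies of the shared prefix $[x_0,\zeta]$, each carrying one of the $n$ subtrees above $\zeta$, with $g$ multiplied by $n^{1/p}$ and $v$ by $n^{-1/q}$ along the copied prefixes, does not decrease $\|I_{g,v,x_0}\|$. Indeed, for the lift $\tilde f$ equal to $n^{-1/p}f$ on the prefix copies and to $f$ on the subtrees one checks, exactly as in Lemma~\ref{razv}, that $\|\tilde f\|_{L_p}=\|f\|_{L_p}$ and that the prefix integral $\int_{[x_0,\zeta]}gf$ is reproduced copy by copy, whence $\|I_{\text{unf}}\tilde f\|_{L_q}=\|I_{g,v,x_0}f\|_{L_q}$ and so $\|I_{g,v,x_0}\|\le\|I_{\text{unf}}\|$. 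Iterating over all branchings reduces $\mathbb{A}$ to a disjoint union of $m_*={\rm card}\,{\bf V}_N^{\cal A}(\xi_0)\underset{C_*}{\asymp}2^{\psi(N)}$ paths isometric to $[0,R)$; a level-$j$ segment is duplicated once per leaf below it, i.e. ${\rm card}\,{\bf V}_{N-j}^{\cal A}(\cdot)\underset{C_*}{\asymp}2^{\psi(N)-\psi(j)}$ times, so (as in the computation leading to (\ref{uxit_wxit_ch}), (\ref{ujxi}), (\ref{tr_gle})) the accumulated factors turn the weights at a point of the level-$j$ portion into $g_0(t)\,2^{(\psi(N)-\psi(j))/p}$ and $v_0(t)\,2^{-(\psi(N)-\psi(j))/q}$, i.e. into those of the operator $T=2^{\psi(N)(1/p-1/q)}\hat I_{\hat g_0,\hat v_0}$. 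Using the disjoint-union identity together with $m_*\underset{C_*}{\asymp}2^{\psi(N)}$ gives $\|I_{g,v,x_0}\|\le m_*^{1/q-1/p}\|T\|=(m_*\,2^{-\psi(N)})^{1/q-1/p}\|\hat I_{\hat g_0,\hat v_0}\|\underset{p,q,C_*}{\lesssim}\|\hat I_{\hat g_0,\hat v_0}\|$.

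The two bounds yield the asserted $\asymp$. When $C_*=1$ all the $\asymp$ above are equalities, so the upper estimate reads $\|I_{g,v,x_0}\|\le\|\hat I_{\hat g_0,\hat v_0}\|$ and the lower one $\|I_{g,v,x_0}\|\ge\|\hat I_{\hat g_0,\hat v_0}\|$, giving the exact equality. The main obstacle I anticipate is making the iterated unfolding and the reduction to $N<\infty$, $p<\infty$ fully rigorous: for $N=\infty$ one must unfold countably many branchings and pass to the limit, which I would do by truncating to $[{\cal A}]_{\le N'}$, carrying out the finite argument there, and letting $N'\to\infty$ via B.~Levi's theorem as in the proof of Theorem~\ref{main_plq}, while checking that the disjoint-union identity and the weight identifications survive the limit. (A fully equivalent route would be to subdivide each edge into equal pieces, apply Theorem~\ref{p_ge_q} to the resulting combinatorial tree, whose $C_*$ is unchanged because subdivision vertices do not branch, and pass to the Riemann limit; the convergence of both operator norms is the corresponding technical point.)
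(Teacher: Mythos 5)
Your plan is the one the paper itself intends: the paper's entire proof of this theorem is the single sentence that the result ``is proved similarly as Theorem \ref{p_ge_q}'', and your ingredients are the natural continuous counterparts of that proof --- a continuous version of Lemma \ref{razv} (unfolding at a branch point with the factors $n^{1/p}$, $n^{-1/q}$), the identity that for $p\ge q$ the $L_p\to L_q$ norm over $m_*$ identical disjoint copies is $m_*^{1/q-1/p}$ times the single-copy norm (this is indeed equivalent to the concavity/symmetrization step), and radial test functions for the lower bound (the analogue of the reduction to $\overline{\mathfrak{S}}^{p,q}_{{\cal A},u,w}$ made before Theorem \ref{p_ge_q}). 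Each of these ingredients is correct and correctly justified in your sketch.

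There is, however, a genuine counting error at the step where both of your bounds identify the resulting operator on $(0,\,R)$. Under the stated convention ($\Delta((\xi',\,\xi''))=[a_j,\,b_j]$ for $\xi'\in{\bf V}_j^{\cal A}(\xi_0)$, $\xi''\in{\bf V}_1^{\cal A}(\xi')$), which you implicitly adopt because you need $\hat g_0\hat f_0=g_0f_0$ portionwise, the portion of $\mathbb{A}$ at distances corresponding to $[a_j,\,b_j]$ is the union of the edges emanating from the level-$j$ vertices, and their number is ${\rm card}\,{\bf V}_{j+1}^{\cal A}(\xi_0)\underset{C_*}{\asymp}2^{\psi(j+1)}$ (each $\xi'\in{\bf V}_j^{\cal A}(\xi_0)$ sprouts ${\rm card}\,{\bf V}_1^{\cal A}(\xi')$ of them), not ${\rm card}\,{\bf V}_{j}^{\cal A}(\xi_0)\asymp2^{\psi(j)}$ as you assert; likewise, in the full unfolding such an edge $(\xi',\,\xi'')$ is duplicated ${\rm card}\,{\bf V}_{N-j-1}^{\cal A}(\xi'')\underset{C_*}{\asymp}2^{\psi(N)-\psi(j+1)}$ times, not $2^{\psi(N)-\psi(j)}$ times. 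The mismatch is the branching number $2^{\psi(j+1)-\psi(j)}$, which nothing in (\ref{cvjj0}) bounds (only ${\rm card}\,{\bf V}_1^{\cal A}(\xi_0)=1$ is assumed; the increments of $\psi$ are arbitrary), so it cannot be absorbed into constants depending on $p,\,q,\,C_*$. With the count corrected, your argument proves the statement for the operator whose weights on that portion are $v_0(t)\cdot2^{\psi(j+1)/q}$ and $g_0(t)\cdot2^{-\psi(j+1)/p}$, i.e.\ with $\psi(j+1)$ in place of $\psi(j)$. This re-indexing is unavoidable, not a matter of taste: take $p=q=2$, $N=2$, a root with one child $\xi_1$ which has $n$ children, stem of length $L_0$ and branches of length $L_1$ (so $C_*=1$, $\psi(0)=\psi(1)=0$, $2^{\psi(2)}=n$), and $g_0=\chi_{(0,L_0)}$, $v_0=\chi_{(L_0,L_0+L_1)}$; then $\|I_{g,v,x_0}\|=n^{1/2}(L_0L_1)^{1/2}$, while $\hat I_{\hat g_0,\hat v_0}$ with the weights as literally defined has norm $(L_0L_1)^{1/2}$, so the claimed equality for $C_*=1$ fails and the ratio is unbounded in $n$. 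Thus, as written, your identification of the limiting one-dimensional operator is false; the proof (and the normalization of $\hat g_0$, $\hat v_0$ it targets) becomes correct once the edge count and the indexing of the weights are fixed consistently with each other.
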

This result is proved similarly as Theorem \ref{p_ge_q}.
For $p=q=2$ it was obtained in \cite{naim_sol}.

In conclusion, the author expresses her sincere gratitude to
V.D. Stepanov providing references.

\begin{Biblio}
\bibitem{and_hein} K.F. Andersen, H.P. Heinig, ``Weighted norm inequalities for
certain integral operators'', {\it SIAM J. Math. Anal.}, {\bf 14}
(1983), 834--844.

\bibitem{ben1}  G. Bennett, ``Some elementary inequalities'',
{\it Quart. J. Math. Oxford Ser. (2)}, {\bf 38}:152 (1987),
401–425.

\bibitem{ben2} G. Bennett, ``Some elementary inequalities. II'',
{\it Quart. J. Math. Oxford Ser. (2)} 39:156 (1988), 385–400.

\bibitem{bennett_g} G. Bennett, ``Some elementary inequalities.
III'', {\it Quart. J. Math. Oxford Ser. (2)}, {\bf 42}:166 (1991),
149–174.

\bibitem{j_brad} J.S. Bradley, ``Hardy inequalities with mixed
norms'', {\it Canad. Math. Bull.} {\bf 21}:4 (1978), 405–408.

\bibitem{vd_step94} M.Sh. Braverman, V.D. Stepanov, ``On the discrete Hardy
inequality'', {\it  Bull. London Math. Soc.}, {\bf 26}:3 (1994),
283--287.

\bibitem{evans_har} W.D. Evans, D.J. Harris, ``Fractals, trees and the Neumann
Laplacian'', {\it Math. Ann.}, {\bf 296}:3 (1993), 493--527.

\bibitem{e_h_l} W.D. Evans, D.J. Harris, J. Lang, ``Two-sided estimates for the approximation
numbers of Hardy-type operators in $L_\infty$ and $L_1$'', {\it
Studia Math.}, {\bf 130}:2 (1998), 171–192.

\bibitem{ev_har_lang} W.D. Evans, D.J. Harris, J. Lang, ``The approximation numbers
of Hardy-type operators on trees'', {\it Proc. London Math. Soc.}
{\bf (3) 83}:2 (2001), 390–418.

\bibitem{ev_har_pick} W.D. Evans, D.J. Harris, L. Pick, ``Weighted Hardy
and Poincar\'{e} inequalities on trees'', {\it J. London Math.
Soc.}, {\bf 52}:2 (1995), 121--136.

\bibitem{fars_sm} S.M. Farsani, ``On the boundedness and compactness of Riemann-Liouville fractional operators'' [Russian], {\it Sibirsk. Mat. Zh.}
{\bf 54}:2 (2013), 468–479.

\bibitem{gold_man} M.L. Goldman, ``Hardy type inequalities on the cone
of quasimonotone functions'', Research report 98/31, Russian Acad.
of Sciences, Far Eastern Branch, Khabarovsk, 1998.

\bibitem{grosse_erd} K.-G. Grosse-Erdmann, {\it The blocking technique, weighted
mean operators and Hardy's inequality}. Lecture Notes in
Mathematics, vol. 1679. Springer-Verlag, Berlin, 1998.

\bibitem{hein1} H.P. Heinig, ``Weighted norm inequalities for
certain integral operators, II'', {\it Proc. AMS}, {\bf 95}
(1985), 387--395.

\bibitem{kuf_mal_pers} A. Kufner, L. Maligranda, L.-E. Persson,
{\it The Hardy inequality. About its history and some related
results}. Vydavatelsky Servis, Plze\v{n}, 2007. 162 pp.

\bibitem{kuf_per} A. Kufner, L.-E. Persson, {\it Weighted inequalities of
Hardy type}. World Scientific Publishing Co., Inc., River Edge,
NJ, 2003.

\bibitem{l_leind}  L. Leindler, ``Generalization of inequalities of Hardy and
Littlewood'', Acta Sci. Math. {\bf 31} (1970), 279-285.

\bibitem{lifs_m} M.A. Lifshits, ``Bounds for entropy numbers for some critical
operators'', {\it Trans. Amer. Math. Soc.}, {\bf 364}:4 (2012),
1797–1813.

\bibitem{l_l} M.A. Lifshits, W. Linde, ``Compactness properties of weighted summation operators
on trees'', {\it Studia Math.}, {\bf 202}:1 (2011), 17--47.

\bibitem{l_l1} M.A. Lifshits, W. Linde, ``Compactness properties of weighted summation operators
on trees --- the critical case'', {\it Studia Math.}, {\bf 206}:1
(2011), 75--96.

\bibitem{mazya1} V.G. Maz’ja [Maz’ya], {\it Sobolev spaces} (Leningrad. Univ.,
Leningrad, 1985; Springer, Berlin–New York, 1985).

\bibitem{naim_sol} K. Naimark, M. Solomyak, ``Geometry of Sobolev spaces on regular trees and the Hardy
inequality'', {\it Russian J. Math. Phys.}, {\bf 8}:3 (2001),
322--335.

\bibitem{r_oin} R. Oinarov, ``Two-sided estimates for the norm of some classes of integral
operators'', {\it Trudy Mat. Inst. Steklov} {\bf 204} (1993),
240--250; translation in {\it Proc. Steklov Inst. Math.} {\bf 204}
(1994), 205–-214.

\bibitem{oin_per_tem} R. Oinarov, L.-E. Persson, A. Temirkhanova,
``Weighted inequalities for a class of matrix operators: the case
$p\le q$'', {\it  Math. Inequal. Appl.},  {\bf 12}:4 (2009),
891–903.

\bibitem{okp_per_wed1} C.A. Okpoti, L.E. Persson, A. Wedestig, ``Scales of weight characterizations
for some multidimensional discrete Hardy and Carleman type
inequalities'', {\it Proc. A. Razmadze Math. Inst.}, {\bf 138}
(2005), 63–84.

\bibitem{okp_per_wed2} C.A. Okpoti, L.E. Persson, A. Wedestig,
``Weight characterizations for the discrete Hardy inequality with
kernel'', {\it J. Inequal. Appl.}, 2006, Art. ID 18030, 14 pp.

\bibitem{pr_st} D.V. Prokhorov, V.D. Stepanov, ``Weighted estimates for Riemann -- Liouville operators and their applications'', {\it
Tr. Mat. Inst. Steklova} {\bf 243} (2003), 289--312; translation
in {\it Proc. Steklov Inst. Math.} {\bf 243} (2003), 278-–301.

\bibitem{n_raut} N.A. Rautian, ``On the boundedness of a class of fractional-type integral operators'', {\it Mat. Sb.}
{\bf 200}:12 (2009), 81--106; translation in {\it Sb. Math.} {\bf
200}:11-12 (2009), 1807–1832.

\bibitem{solomyak} M. Solomyak, ``On approximation of functions from Sobolev spaces on metric
graphs'', {\it J. Approx. Theory}, {\bf 121}:2 (2003), 199--219.

\bibitem{step90} V.D. Stepanov, ``Two-weight estimates for Riemann -- Liouville
integrals'',  {\it Izv. Akad. Nauk SSSR Ser. Mat.} {\bf 54}:3
(1990), 645--656; transl.: {\it Math. USSR-Izv.}, {\bf 36}:3
(1991), 669–681.

\bibitem{stepanov1} V.D. Stepanov, ``Two-weighted estimates for Riemann-Liouville
integrals'', Rept. 39, Ceskoslov. Akad. V\v{e}d. Mat. \'{U}stav.
Praha, 1988. P. 1--28.

\bibitem{st_ush} V.D. Stepanov, E.P. Ushakova, ``Kernel operators with variable intervals of integration in Lebesgue
spaces and applications'', {\it Math. Inequal. Appl.} {\bf 13}:3
(2010), 449–510.

\bibitem{vas_rjmp1} A.A. Vasil'eva, ``Embedding theorem
for weighted Sobolev classes with weights that are functions
of the distance to some $h$-set'', {\it Russ. J. Math. Phys.},
{\bf 20}:3, 360--373.

\bibitem{vas_rjmp2} A.A. Vasil'eva, ``Embedding theorem
for weighted Sobolev classes with weights that are functions
of the distance to some $h$-set. II'', {\it Russ. J. Math. Phys.},
to appear.
\end{Biblio}

\end{document}